\documentclass{amsart}
\usepackage{psfrag}
\usepackage{subcaption}
\usepackage{color}
\usepackage{tikz}
\usetikzlibrary{matrix,arrows}
\usepackage{graphicx,graphics}
\usepackage{amssymb,amsfonts,amsmath,amstext,amsthm,amscd,verbatim,enumerate,mathrsfs}
\usepackage[normalem]{ulem}
\usepackage[T1]{fontenc}
\usepackage{hyperref}

\DeclareMathOperator{\dist}{dist}
\DeclareMathOperator{\diam}{diam}

\begin{document}

\newtheorem{theorem}{Theorem}[section]
\newtheorem{result}[theorem]{Result}
\newtheorem{fact}[theorem]{Fact}
\newtheorem{conjecture}[theorem]{Conjecture}
\newtheorem{lemma}[theorem]{Lemma}
\newtheorem{proposition}[theorem]{Proposition}
\newtheorem{corollary}[theorem]{Corollary}
\newtheorem{facts}[theorem]{Facts}
\newtheorem{props}[theorem]{Properties}
\newtheorem*{thmA}{Theorem A}
\newtheorem{ex}[theorem]{Example}
\theoremstyle{definition}
\newtheorem{definition}[theorem]{Definition}
\newtheorem{remark}[theorem]{Remark}
\newtheorem{example}[theorem]{Example}
\newtheorem*{defna}{Definition}

\newcommand{\notes} {\noindent \textbf{Notes.  }}
\newcommand{\note} {\noindent \textbf{Note.  }}
\newcommand{\defn} {\noindent \textbf{Definition.  }}
\newcommand{\defns} {\noindent \textbf{Definitions.  }}

\renewcommand{\int}{\operatorname{int}}

\newcommand{\x}{{\bf x}}
\renewcommand{\d}{{\delta}}
\newcommand{\g}{{\gamma}}
\newcommand{\z}{{\bf z}}
\newcommand{\e}{{\epsilon}}
\newcommand{\B}{{\bf b}}
\newcommand{\V}{{\bf v}}
\newcommand{\T}{\mathbb{T}}
\renewcommand{\S}{\mathbb{S}}
\newcommand{\Z}{\mathbb{Z}}
\newcommand{\Hp}{\mathbb{H}}
\newcommand{\D}{\mathbb{D}}
\newcommand{\R}{\mathbb{R}}
\newcommand{\N}{\mathbb{N}}
\newcommand{\W}{\mathcal{W}}
\renewcommand{\B}{\mathbb{B}}
\newcommand{\C}{\mathbb{C}}
\newcommand{\ft}{\widetilde{f}}
\newcommand{\dt}{{\mathrm{det }\;}}
 \newcommand{\adj}{{\mathrm{adj}\;}}
 \newcommand{\0}{{\bf O}}
 \newcommand{\av}{\arrowvert}
 \newcommand{\zbar}{\overline{z}}
 \newcommand{\xbar}{\overline{X}}
 \newcommand{\htt}{\widetilde{h}}
\newcommand{\ty}{\mathcal{T}}
\renewcommand\Re{\operatorname{Re}}
\renewcommand\Im{\operatorname{Im}}
\newcommand{\tr}{\operatorname{Tr}}
\newcommand\interior{\operatorname{int}}
\renewcommand{\skew}{\operatorname{skew}}

\newcommand{\ds}{\displaystyle}
\numberwithin{equation}{section}

\renewcommand{\theenumi}{(\roman{enumi})}
\renewcommand{\labelenumi}{\theenumi}

\newcommand{\alastair}[1]{{\scriptsize \color{red}\textbf{Alastair's note:} #1 \color{black}\normalsize}}

\newif\ifcomment


\title{Genus $g$ Cantor sets and germane Julia sets}

\author{A. Fletcher}
\email{afletcher@niu.edu}
\address{Department of Mathematical Sciences, Northern Illinois University, Dekalb, IL 60115, USA}

\author{D. Stoertz}
\email{stoert1@stolaf.edu}
\address{Department of Mathematics, Statistics, and Computer Science, St.\@ Olaf College, Northfield, MN 55057, USA}

\author{V. Vellis}
\email{vvellis@utk.edu}
\address{Department of Mathematics, The University of Tennessee, Knoxville, TN 37966}

\subjclass[2010]{Primary 54C50; Secondary 30C65, 37F10}

\thanks{V.~Vellis was partially supported by NSF DMS grants 1952510 and 2154918.}

\date\today

\begin{abstract}
The primary aim of this paper is to give topological obstructions to Cantor sets in $\R^3$ being Julia sets of uniformly quasiregular mappings. Our main tool is the genus of a Cantor set. We give a new construction of a genus $g$ Cantor set, the first for which the local genus is $g$ at every point, and then show that this Cantor set can be realized as the Julia set of a uniformly quasiregular mapping. These are the first such Cantor Julia sets constructed for $g\geq 3$. We then turn to our dynamical applications and show that every Cantor Julia set of a hyperbolic uniformly quasiregular map has a finite genus $g$; that a given local genus in a Cantor Julia set must occur on a dense subset of the Julia set; and that there do exist Cantor Julia sets where the local genus is non-constant.
\end{abstract}

\maketitle

\section{Introduction}

It is well-known that the Julia set $J(f)$ of a rational map $f$ can be a Cantor set. The simplest examples arise for quadratic polynomials $z^2+c$ when $c$ is not in the Mandelbrot set. It is also well known that every Cantor set embedded in $\overline{\R^2}$ has a defining sequence consisting of topological disks, that is, every such Cantor set arises as an infinite intersection of a collection of nested disks, see \cite{Moise}.

The goal of the current paper is to study topological properties of Julia sets of uniformly quasiregular mappings (henceforth denoted by UQR mappings) in $\overline{\R^3}$ and, in particular, when they are Cantor sets, what sort of defining sequences they can have. UQR mappings provide the setting for the closest counterpart to complex dynamics in $\overline{\R^3}$ and, more generally, higher real dimensions. We will, however, stay in dimension three in this paper
as this provides the setting to consider the genus of a Cantor set as introduced by \v{Z}eljko \cite{Z1} based on the notion of defining sequences from Armentrout \cite{Ar}.

The first examples of UQR mappings constructed by Iwaniec and Martin \cite{IM} have a Cantor set as the Julia set. Moreover, although this was not of concern to the authors, from their construction it is evident that the Julia set is a tame Cantor set. This means that the Cantor set can be mapped via an ambient homeomorphism of $\overline{\R}^3$ onto the standard ternary Cantor set contained in a line. Equivalently, this means the Cantor set has a defining sequence consisting of topological $3$-balls. Moreover, such a Cantor set $X$ is then said to have genus zero, written $g(X) = 0$.

If a Cantor set is not tame, then it is called wild. The standard example of a wild Cantor set in $\R^3$ is Antoine's necklace. The first named author and Wu \cite{FW} constructed a UQR map for which the Julia set is an Antoine's necklace that has genus $1$.
More recently, the first and second named authors \cite{FS} showed via a more intricate construction that there exist UQR mappings whose Julia sets are genus $2$ Cantor sets. 

The first main aim of the current paper is to give a general construction which will apply to all genera. This will necessitate a new topological construction since, as far as the authors are aware, the only construction of genus $g$ Cantor sets for all $g$ are given by \v{Z}eljko \cite{Z1} and this construction cannot yield Julia sets, as will be seen via Corollary \ref{cor:1} below. The local genus $g_x(X)$ of a Cantor set $X$ at $x\in X$ describes the genus of handlebodies required in a defining sequence in any neighborhood of $x$. \v{Z}eljko's construction has local genus one except at one point. Our first main result reads as follows.

\begin{theorem}
\label{thm:1}
For each $g\in\N$ there exists a UQR map $f_g:\overline{\R^3} \to \overline{\R^3}$ for which the Julia set $J(f_g)$ is a Cantor set of genus $g$ and, moreover, for each $x\in J(f_g)$, the local genus $g_x(J(f_g)) = g$.
\end{theorem}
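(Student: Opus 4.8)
The plan is to split the proof into two largely independent parts: a topological construction of a genus $g$ Cantor set $X_g$ with constant local genus $g$, and a dynamical realization showing $X_g$ is the Julia set of a UQR map. For the first part, I would build a defining sequence of handlebodies $\{M_k\}$ where each component is a genus $g$ handlebody, and where the pattern by which handlebodies at level $k+1$ sit inside a handlebody at level $k$ is \emph{self-similar} and \emph{symmetric enough} that every point of $X_g = \bigcap_k M_k$ sees the same local picture. Concretely, I would take a single genus $g$ handlebody $H$ and embed inside it $N$ scaled copies $H_1,\dots,H_N$ of itself so that (a) each $H_i$ is isotopic in $H$ to a ``core-following'' handlebody that genuinely wraps around all $g$ holes in a way that cannot be simplified (so that the genus does not drop to something smaller when passing to a subcollection), and (b) the configuration, viewed from any one of the $H_i$, looks like the configuration viewed from $H$. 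Establishing that the resulting Cantor set has genus exactly $g$ requires two estimates: an upper bound $g(X_g)\le g$ (immediate, since the $M_k$ are genus $g$ handlebodies) and a lower bound $g(X_g)\ge g$, which is where \v{Z}eljko-type arguments on the non-reducibility of the embeddings enter, and which I expect to be the main topological obstacle. To get constant local genus $g_x(X_g)=g$ at every $x$, I would use the self-similarity: any neighborhood of $x$ contains some $H_i$ at a deep level, and since the configuration inside $H_i$ is a rescaled copy of the whole configuration, the genus needed there is again $g$; combined with the global lower bound this pins the local genus at $g$ at every point.

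For the dynamical part, the strategy is the standard one for producing Cantor Julia sets of UQR maps, following Iwaniec--Martin \cite{IM}, Fletcher--Wu \cite{FW}, and Fletcher--Stoertz \cite{FS}. I would arrange the defining sequence so that the complement $\overline{\R^3}\setminus M_1$ is mapped by a quasiregular $g_g$ onto $\overline{\R^3}\setminus M_0$ where $M_0\supset M_1$, i.e.\ $g_g$ is an expanding ``blow-up'' map on the complement of $X_g$, with $g_g(X_g)=X_g$ and $g_g$ behaving like a degree-$N$ branched cover matching the combinatorics of how the $N$ pieces sit inside $H$. The key technical requirement is that the first-level embedding be realized by an \emph{explicit quasiconformal} (or quasiregular) map with controlled dilatation, uniformly over all scales; self-similarity of the construction is exactly what makes the dilatation bound uniform. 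One then invokes the Sullivan--Tukia--type quasiconformal deformation / iteration argument (or the conformal trap construction used in the cited papers) to upgrade $g_g$ to a \emph{uniformly} quasiregular map $f_g$ with the same Julia set $X_g$: one shows the iterates $f_g^n$ have uniformly bounded dilatation by building an invariant conformal structure, or alternatively by using the ``conformal annulus'' trapping argument so that the iterates never accumulate dilatation.

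The main obstacle, as in the earlier papers, is reconciling the two parts: the topological construction must be flexible enough to admit a constant-local-genus, self-similar defining sequence, yet rigid enough that the first-level embedding of $N$ copies of $H$ into $H$ is realizable by a quasiregular map of controlled distortion whose dynamics have $X_g$ as repeller. In particular I expect the delicate point to be choosing the embedding of the $g$ ``handles'' of each $H_i$ inside $H$ so that simultaneously: the genus does not collapse (topology), and the map straightening one level to the next is $K$-quasiregular with $K$ independent of $g$ and of the level (geometry). For $g=1$ this is the Antoine necklace case \cite{FW} and for $g=2$ it is \cite{FS}; the contribution here is a uniform scheme covering all $g$. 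I would handle this by making the $g$ handles of $H$ mutually ``unlinked'' in a product-like fashion (so the quasiconformal model is essentially a product of the $g=1$ model with itself, up to bounded distortion), which keeps $K$ uniform, while using a \v{Z}eljko-style intersection/winding argument on each handle separately to force the genus lower bound to be exactly $g$. Finally, once $f_g$ is constructed with $J(f_g)=X_g$, Theorem~\ref{thm:1} follows by combining $g(X_g)=g$ and $g_x(X_g)=g$ for all $x$ from the topological part.
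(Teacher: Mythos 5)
Your high-level split (self-similar defining sequence of genus $g$ handlebodies, then a quasiregular realization whose inverse branches are similarities so that iterates do not accumulate dilatation) matches the paper's strategy, but both of the points you yourself flag as ``the main obstacle'' are exactly the places where the proposal has no argument, and the specific way you propose to resolve them does not match what is actually needed. On the topology: the paper does not place sub-handlebodies that ``wrap around all $g$ holes'' of the parent; it places a necklace-style chain of small genus $g$ tori along the core curve, arranged so that consecutive copies are \emph{completely linked} (the $m$-th handle loop of one forms a Hopf link with the $m$-th handle loop of the next and is unlinked from the others). The lower bound $g(X_g)\ge g$, and the stronger statement $g_x(X_g)=g$ at \emph{every} point, are then proved by a rank argument in $\pi_1$ of a hypothetical genus $g-1$ handlebody $U$ in an alternative defining sequence, together with a ``nice filling disks'' lemma: complete linking forces $g$ independent generators inside the rank $g-1$ group $\pi_1(U)$, so $U$ must swallow whole chains level by level and hence a core curve of the top-level torus, contradicting that $U$ is small. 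Your appeal to ``\v{Z}eljko-type arguments'' plus ``self-similarity pins the local genus'' is not a proof: self-similarity only transfers a lower bound you have not established, and constant local genus is strictly stronger than the global genus bound (indeed \v{Z}eljko's own genus $g$ examples have local genus $1$ at all but one point), so the localized non-reducibility lemma is the essential missing content.

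On the dynamics, the hard step is not upgrading to uniform quasiregularity (that part is standard once the inverse branches are similarities); it is constructing a branched cover from $T^g$ minus the $m$ sub-tori onto a round ball minus $T^g$ that sends each $\partial T^g_{g,i,j}$ onto $\partial T^g$ with controlled distortion. The paper builds this with a sequence of degree $2$ ``folding'' maps that halve the genus (hence the combinatorial sequences $a_{n,k}$, $c_{n,i}$ and the choice of $N_g$), followed by a winding map and a quotient by an involution, and then glues in a UQR power map near infinity using the Berstein--Edmonds extension theorem for branched covers between $3$-manifolds. Your proposed shortcut --- make the $g$ handles ``product-like'' so the quasiconformal model is a product of $g=1$ Antoine models --- does not obviously produce such a cover: a genus $g$ handlebody is not a product of solid tori, and the map must take the exterior of the sub-tori inside one handlebody onto a spherical shell-type region, which is precisely what the folding/winding construction is designed to achieve and what your outline leaves unaddressed. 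Moreover, the complete-linking arrangement needed for the genus lower bound and the symmetry needed for the folding maps must be satisfied \emph{simultaneously}; asserting that the configuration can be chosen ``flexible enough yet rigid enough'' restates the problem rather than solving it. As written, the proposal is a reasonable plan but the two key lemmas it would need (the linking/filling-disk rank argument and the explicit BLD cover) are missing.
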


We remark that the genus $1$ case of Theorem \ref{thm:1} recovers Antoine's necklace, whereas the genus $2$ case is substantially different from the construction in \cite{FS}. For all higher genera, Theorem \ref{thm:1} provides a new construction. This construction is necessarily highly intricate as it needs to be amenable to our dynamical applications.

Next, we turn to topological obstructions for Cantor sets in $\overline{\R^3}$ being Julia sets based on the genus. It is an important theme in dynamics to  give geometric or topological restrictions on the Julia set, once a toplogical type has been fixed. The first named author and Nicks \cite{FN11} showed that the Julia set of a UQR mapping in $\overline{\R^n}$ is uniformly perfect, that is, ring domains which separate points of the Julia set cannot be too thick. As a counterpart to this result, it was shown by the first and third named authors \cite{FV}, that if the Julia set of a hyperbolic UQR mapping in $\overline{\R^n}$ is totally disconnected, then it is uniformly disconnected. Here, a uniformly quasiregular mapping is hyperbolic if the Julia set does not meet the closure of the post-branch set. Roughly speaking, this result says that ring domains separating points of the Julia set cannot be forced to be too thin. 

The above results place geometric conditions on which Cantor sets can be Julia sets. Our second main result in this paper places a topological restriction on which Cantor sets can be Julia sets.

\begin{theorem}
\label{thm:3}
Let $f:\overline{\R^3} \to \overline{\R^3}$ be a hyperbolic UQR map for which $J(f)$ is a Cantor set. Then there exists $g \in \N \cup \{ 0 \}$ such that the genus of $J(f)$ is $g$.
\end{theorem}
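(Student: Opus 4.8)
The plan is to show that the genus of $J(f)$ is bounded above by using the hyperbolicity hypothesis together with the self-similar dynamical structure of $J(f)$. The key point is that a hyperbolic UQR map is, near its Julia set, uniformly expanding in a suitable metric, and that $J(f)$ can be covered by finitely many "pieces" each of which maps onto a larger piece with bounded local degree. First I would recall the standard dynamical facts for hyperbolic UQR maps: there is a neighborhood $U$ of $J(f)$ disjoint from the closure of the post-branch set, on which all branches of iterates of $f$ extend to quasiregular maps with uniformly bounded distortion, and $f$ is expanding on $J(f)$ with respect to a conformal-type metric. In particular, for a suitable defining sequence $(M_i)$ of $J(f)$ by handlebodies (which exists since $J(f)$ is a Cantor set in $\overline{\R^3}$, by Armentrout/\v{Z}eljko), the components of $M_{i+1}$ are mapped by $f$ onto components of $M_i$ by maps of bounded degree $d$, where $d$ depends only on $f$.

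The main step is then a pullback/bounded-degree argument controlling how genus can grow. I would start from \emph{some} defining sequence, extract an initial handlebody $M_0$ (which we may take so that $f$ maps $J(f)\cap M_0'$ into $M_0$ for the components $M_0'$ of $M_1$), and iterate: each component $C$ of $M_{i+1}$ maps under $f$ onto a component $C'$ of $M_i$ with degree at most $d$. Since a quasiregular (indeed, any branched-covering-like) map of bounded degree from one handlebody onto another cannot increase genus by more than a bounded amount — more precisely, the genus of $C$ is controlled by the genus of $C'$ times the degree, via a Riemann–Hurwitz-type estimate for handlebodies, or via the fact that a degree-$d$ map pulls back a genus-$h$ handlebody to something of genus at most $dh$ up to the combinatorics of the branching — the genus of the components of $M_i$ stays uniformly bounded along the sequence. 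Passing to the infimum over all defining sequences in the definition of $g(J(f))$, and using \v{Z}eljko's result that genus is achieved by a subsequence of any given defining sequence after modification, we conclude $g(J(f)) \leq g_0 < \infty$ for an explicit $g_0$ depending only on $d$ and the genus of the initial handlebody. Finiteness is all that is asserted, so no sharp bound is needed.

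The main obstacle I anticipate is making precise the claim that a bounded-degree quasiregular map between handlebodies cannot blow up the genus, and tying this to the \emph{defining-sequence} notion of genus rather than to any single handlebody. The subtlety is that the genus of a Cantor set is defined as an infimum over all defining sequences of a supremum of local genera, so bounding the genus in one cleverly chosen $f$-invariant defining sequence does not immediately bound $g(J(f))$ unless one invokes the right structural lemma. I would handle this by using the hyperbolic dynamics to build a defining sequence adapted to $f$ (so that the handlebody combinatorics are literally pulled back by $f$), proving the per-step genus bound there, and then quoting \v{Z}eljko's characterization that the genus computed from any defining sequence dominates $g(J(f))$ — so an upper bound from one sequence suffices. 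A secondary technical point is ruling out that the post-branch set interferes: hyperbolicity is exactly what guarantees the branches of $f^{-1}$ are well-defined and of bounded degree on a neighborhood of $J(f)$, which is where that hypothesis is essential (and presumably why it cannot be dropped).
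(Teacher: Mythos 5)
Your overall strategy---pull back a defining sequence by inverse branches of (an iterate of) $f$, bound the genus of the resulting sequence, and note that a single defining sequence bounds $g(J(f))$ from above since the genus is an infimum---is in the same spirit as the paper's proof of Theorem \ref{thm:3}. But the central step, as you state it, has a genuine gap. You propose that each component of $M_{i+1}$ maps onto a component of $M_i$ with degree at most $d$, and that a ``Riemann--Hurwitz-type estimate for handlebodies'' bounds the genus of the preimage by $d$ times the genus of the image. Even granting such an estimate (which you do not prove, and which is delicate: the preimage of a cube with handles under a genuinely branched map of bounded degree need not be a cube with handles at all, so the pulled-back sequence might not even be a defining sequence in the required sense), a multiplicative per-step bound compounds: after $k$ levels it gives at most $d^{k}$ times the initial genus, which is not uniformly bounded in $k$. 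So your assertion that ``the genus of the components of $M_i$ stays uniformly bounded along the sequence'' does not follow from the estimate you invoke.

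What the paper uses instead---and what your sketch only gestures at with ``bounded degree''---is that hyperbolicity gives \emph{injectivity}: by Lemma \ref{lem:1} there is $r_1>0$ such that $f$ is injective on $B(x,r_1)$ for every $x\in J(f)$, together with $N\in\N$ such that $\overline{f^{-N}(U)}\subset U$ for the $r_1$-neighbourhood $U$ of $J(f)$. Replacing $f$ by $F=f^N$, every branch of $F^{-k}$ on such a ball is a homeomorphism; so after first passing far enough down an arbitrary defining sequence that each component lies in some $B(x,r_1)$, the pullbacks $N_k=F^{-k}(N_0)$ consist of homeomorphic copies of components of $N_0$. The genus is therefore preserved exactly (the relevant local degree is $1$), not merely multiplied by a bounded factor, and the containment $\overline{F^{-1}(U)}\subset U$ gives the nesting needed for $(N_k)$ to be a defining sequence with $\bigcap_k N_k = J(F)=J(f)$. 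No Riemann--Hurwitz input and no special characterization of the genus are needed: one defining sequence of uniformly bounded genus suffices by the definition of $g(J(f))$ as an infimum. To repair your argument, replace the bounded-degree step by this injectivity-on-uniform-balls argument, or else supply both a level-independent per-component degree bound and a proof that bounded-degree pullbacks of handlebodies are handlebodies of controlled genus---neither of which your proposal provides.
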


There do exist Cantor sets of infinite genus, see \cite[Theorem 5]{Z1}, and so Theorem \ref{thm:3} rules these out as possibilities for Julia sets of hyperbolic UQR maps. In particular, an even stronger version of Theorem \ref{thm:3} is true: if a Julia set of a hyperbolic UQR map is a Cantor set, then it has a defining sequence which consists of at most finitely many (up to similarities) different handlebodies; see Lemma \ref{lem:finite defining sequence}. This lemma leads to a quasiregular uniformization of Cantor sets in $\R^3$ which may be of independent interest; see Appendix \ref{ap:QRunif}.

We recall that the backwards orbit of $x$ is 
\[ O^-(x) = \{ y : f^m(y) = x \text{ for some }m\in \N \},\]
and the grand orbit is
\[ GO(x) = \{ y : f^{m_1}(y) = f^{m_2}(x) \text{ for some } m_1,m_2\in \N\}.\]
Our next result is on the local genus of points in the Julia set.

\begin{theorem}
\label{thm:4}
Let $f:\overline{\R^3} \to \overline{\R^3}$ be a hyperbolic UQR map for which $J(f)$ is a Cantor set. If the local genus $g_x(J(f)) = g \in \N \cup \{ 0 \}$, then $g_y(J(f)) = g$ for every $y$ in the grand orbit $GO(x)$.
\end{theorem}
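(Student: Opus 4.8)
The plan is to reduce the theorem to the assertion that $g_x(J(f)) = g_{f(x)}(J(f))$ for every $x \in J(f)$; call this $(\star)$. Granting $(\star)$, let $y \in GO(x)$, so that $f^{m_1}(y) = f^{m_2}(x)$ for some $m_1, m_2 \in \N$. Since $J(f)$ is completely invariant, all iterates $f^{k}(x)$ and $f^{k}(y)$ lie in $J(f)$, so applying $(\star)$ repeatedly along the two forward orbits gives $g_y(J(f)) = g_{f^{m_1}(y)}(J(f)) = g_{f^{m_2}(x)}(J(f)) = g_x(J(f))$. Since $O^-(x) \subseteq GO(x)$, this also covers the backward orbit.

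The dynamical input is that hyperbolicity makes $f$ a local homeomorphism near $J(f)$. Indeed, if a branch point of $f$ lay in $J(f)$, its $f$-image would be a point of $J(f)$ lying in $f(B_f)$, contradicting the hypothesis that $J(f)$ misses the closure of the post-branch set; hence $J(f) \cap B_f = \emptyset$, where $B_f$ is the branch set of $f$. As $J(f)$ is compact and $B_f$ is closed, $f$ is a local homeomorphism on a neighborhood of $J(f)$, and by compactness there is $\delta > 0$ such that $f$ is injective on every ball of radius $\delta$ centered at a point of $J(f)$. We combine this with two facts about local genus from \cite{Z1} (the second of which, if not stated there in this form, follows directly from the definition): $g_x(J(f)) = g(J(f) \cap \overline U)$ for every sufficiently small open neighborhood $U$ of $x$ with $\partial U \cap J(f) = \emptyset$; and the genus of a Cantor set is a local topological invariant, in the sense that a homeomorphism between open subsets of $\overline{\R^3}$ carrying one Cantor set onto another preserves their genus (every defining sequence of a Cantor set eventually lies inside any prescribed neighborhood of that set).

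To prove $(\star)$, fix $x \in J(f)$ and put $x' = f(x)$. Choose an open neighborhood $U$ of $x$ with $\diam U < \delta$, with $\partial U \cap J(f) = \emptyset$, and small enough that $g(J(f)\cap\overline U) = g_x(J(f))$. Since $f|_{\overline U}$ is a continuous injection of a compact set, it is a homeomorphism onto $\overline{U'}$, where $U' := f(U)$ is an open neighborhood of $x'$ (as $f$ is open) with $\partial U' = f(\partial U)$. Complete invariance of $J(f)$ then gives $\partial U' \cap J(f) = \emptyset$ and $f(J(f)\cap\overline U) = J(f)\cap\overline{U'}$, since the unique $f$-preimage in $U$ of a point of $J(f)$ again lies in $f^{-1}(J(f)) = J(f)$. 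By the topological invariance of genus, $g(J(f)\cap\overline{U'}) = g(J(f)\cap\overline U) = g_x(J(f))$; and since $J(f)\cap\overline{U'}$ is a clopen neighborhood of $x'$ in $J(f)$, the first fact above yields $g_{x'}(J(f)) \le g(J(f)\cap\overline{U'}) = g_x(J(f))$. For the reverse inequality, argue symmetrically: take $U'$ around $x'$ with $\partial U' \cap J(f) = \emptyset$ realizing $g_{x'}(J(f))$ and with $\overline{U'} \subseteq f(B(x,\delta))$, and let $U$ be its image under the local inverse of $f$ on $B(x,\delta)$; the same bookkeeping gives $g_x(J(f)) \le g(J(f)\cap\overline U) = g(J(f)\cap\overline{U'}) = g_{x'}(J(f))$.

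I expect no essential obstacle here: once hyperbolicity is used to push $J(f)$ off the branch set, the dynamics contributes only the local homeomorphism property, and everything else is bookkeeping together with \v{Z}eljko's framework. The one point that needs to be pinned down carefully is the interface with that framework — namely that the local genus of \cite{Z1} is indeed computed from arbitrarily small neighborhoods of the point and is unchanged under a homeomorphism of such neighborhoods — together with the (purely set-theoretic) verification that the neighborhoods above can be chosen simultaneously small enough to be injectivity neighborhoods for $f$ and optimal for the local genus, both being ``sufficiently small'' conditions.
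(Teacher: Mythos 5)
Your overall strategy — reduce the grand-orbit statement to $g_x(J(f))=g_{f(x)}(J(f))$, use hyperbolicity to get a uniform radius on which $f$ is injective near $J(f)$, and transport small neighbourhoods of $x$ forward by $f$ and backward by a local inverse branch — is essentially the paper's (your $\delta$ is the $r_1$ of Lemma \ref{lem:1}, quoted from \cite{FV}). The genuine gap is your ``fact (1)'': that $g_x(J(f)) = g\bigl(J(f)\cap\overline U\bigr)$ for every sufficiently small $U$. This is not a statement of \cite{Z1} and it is false in general: restricting a defining sequence to the components meeting a clopen piece shows that genus is monotone on clopen subsets, so $g\bigl(E\cap\overline U\bigr)$ dominates the local genus of \emph{every} point of the piece and need not come down to $g_x$ as $U$ shrinks. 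The paper's own Theorem \ref{thm:5} gives a counterexample inside the exact setting of Theorem \ref{thm:4}: the Cantor Julia set $Y_g$ has a point $y$ with $g_y(Y_g)=0$, yet every sufficiently small clopen piece around $y$ contains a cylinder $\xi_{j_1}\circ\cdots\circ\xi_{j_n}(Y_g)$ (all $j_i=m+k$), an affine copy of $Y_g$ which is clopen in $Y_g$ and has genus $g\ge 1$; hence $g\bigl(Y_g\cap\overline U\bigr)\ge g > 0 = g_y(Y_g)$ for every such $U$. Consequently the last equality in your chain $g_{x'} \le g\bigl(J(f)\cap\overline{U'}\bigr) = g\bigl(J(f)\cap\overline U\bigr) = g_x$ fails exactly at the points the theorem is meant to handle, and the symmetric chain for the reverse inequality has the same defect.

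The argument is repairable, and the repair is what the paper actually does: work with the \emph{local} genus of the small piece at the marked point rather than the global genus of the piece. Concretely one needs (a) locality, $g_x(E)=g_x\bigl(E\cap\overline U\bigr)$, which is true but requires a patching argument (components of a defining sequence containing $x$ shrink to $x$, and a defining sequence of the piece must be glued to one for $E\setminus U$ to produce a defining sequence of $E$), and (b) that pushing the handlebodies $M_i^x$ forward by $f$, and pulling a defining sequence at $y=f(x)$ back by a branch of $f^{-1}$, yields admissible components at the image point — the transport step the paper performs directly. The paper first invokes Theorem \ref{thm:3} so that the local genus is finite and realized by a single defining sequence with $g(M_i^x)=g$ and $M_i^x\subset B(x,r_1)$, then obtains $g_{f(x)}\le g_x$ from the forward images and the reverse inequality from the inverse branch. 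Your bookkeeping with $U$, $U'=f(U)$, $\partial U'\cap J(f)=\emptyset$ and $f\bigl(J(f)\cap\overline U\bigr)=J(f)\cap\overline{U'}$ is correct and would slot into that corrected framework; only the appeal to the global genus of the small piece must be removed.
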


As the backwards orbit of a point in $J(f)$ is dense in $J(f)$, we immediately have the following corollary.

\begin{corollary}
\label{cor:1}
Let $f:\overline{\R^3} \to \overline{\R^3}$ be a hyperbolic UQR map with $J(f)$ a Cantor set. Suppose there exists $x\in J(f)$ with $g_x(J(f)) = g$. Then the set of points in $J(f)$ for which the local genus is $g$ is dense in $J(f)$.
\end{corollary}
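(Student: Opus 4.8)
The plan is to obtain Corollary \ref{cor:1} as an immediate consequence of Theorem \ref{thm:4} together with the standard fact that backward orbits are dense in the Julia set. First I would fix a point $x\in J(f)$ with $g_x(J(f))=g$, as furnished by the hypothesis. Since membership in the grand orbit only requires that some forward iterate of $y$ agree with some forward iterate of $x$, the backward orbit satisfies $O^-(x)\subseteq GO(x)$: indeed, if $f^m(y)=x$ then $f^m(y)=f^0(x)$, so $y\in GO(x)$. Hence Theorem \ref{thm:4} applies to every $y\in O^-(x)$ and yields $g_y(J(f))=g$ for all such $y$.

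Next I would record the density statement. For UQR mappings whose Julia set is infinite — which is the case here, $J(f)$ being a Cantor set — no point of $J(f)$ is exceptional, and the usual blow-up argument via normal families (exactly as in the rational case) shows that for every $x\in J(f)$ the backward orbit $O^-(x)$ is dense in $J(f)$; that is, $\overline{O^-(x)}=J(f)$. This is the fact already invoked in the sentence preceding the corollary.

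Combining the two steps: the set
\[
\{\, y\in J(f) : g_y(J(f))=g \,\}
\]
contains $O^-(x)$ by Step 1, and $O^-(x)$ is dense in $J(f)$ by Step 2, so this set is dense in $J(f)$, which is the assertion of Corollary \ref{cor:1}.

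There is essentially no real obstacle here: all of the topological substance is carried by Theorem \ref{thm:4}, and the only point meriting a line of care is the density of backward orbits, for which one should note that the hypothesis that $J(f)$ is a Cantor set forces $J(f)$ to be infinite and hence $x$ to be non-exceptional, so the standard density argument applies verbatim.
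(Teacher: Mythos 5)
Your proposal is correct and is essentially the paper's own argument: the paper also deduces the corollary immediately from Theorem \ref{thm:4} applied to the backward orbit $O^-(x)\subseteq GO(x)$, together with the density of backward orbits in $J(f)$. The extra remark about non-exceptionality of points when $J(f)$ is infinite is a reasonable way of justifying that density, which the paper simply takes as known.
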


This result places further severe restrictions on which Cantor sets can be Julia sets of hyperbolic UQR maps. The constructions in \cite[Theorem 5]{Z1} which yield Cantor sets of genus $g \in \N$ have the property that there is a special point $x\in X$ for which $g_x(X) = g$ and $g_y(X) = 1$ for all other points $y\in X \setminus \{ x\}$. Corollary \ref{cor:1} then implies that these Cantor sets cannot be Julia sets.

Since the examples of Julia sets in \cite{FS,FW} have constant local genus, it is natural to ask if this is always the case for Julia sets which are Cantor sets. Our final result shows that this is not the case.

\begin{theorem}
\label{thm:5}
Let $g\geq 1$.
There exists a hyperbolic UQR map $f:\overline{\R^3} \to \overline{\R^3}$ such that $J(f)$ is a Cantor set of genus $g$, and there exist points with local genus $g$ and other points with local genus $0$.
\end{theorem}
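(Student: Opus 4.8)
The plan is to take the topological construction underlying Theorem~\ref{thm:1} and enlarge it, at every stage, by a single ``innocuous'' round ball. A defining-sequence address that is eventually constant at this extra ball will yield a point of local genus $0$, while the rest of the construction --- hence its genus $g$ behaviour and its realizability as a UQR Julia set --- is left untouched.

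Concretely, I would start from the data produced in the proof of Theorem~\ref{thm:1}: a handlebody $M_0$, pairwise disjoint genus $g$ handlebodies $H_1,\dots,H_k\subset\operatorname{int}M_0$ with $k\ge2$, positioned so that the associated self-similar Cantor set $X'$ has genus $g$ and local genus $g$ at each of its points, together with uniformly expanding quasiregular model maps $\psi_i\colon H_i\to M_0$. To these I adjoin a round ball $B\subset\operatorname{int}M_0$ disjoint from $H_1\cup\dots\cup H_k$ (shrinking the $H_i$ slightly beforehand if necessary to make room), together with a quasiconformal homeomorphism $\psi_B\colon B\to M_0$; after an initial rescaling we may assume $\psi_B^{-1}(B)\subset\operatorname{int}B$ and that $\psi_B$ is uniformly expanding. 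Assembling $\psi_1,\dots,\psi_k,\psi_B$ gives a quasiregular branched cover $\Psi$ of $H_1\cup\dots\cup H_k\cup B$ onto $M_0$, which I would feed into exactly the extension scheme that proves Theorem~\ref{thm:1}: push the complement of $M_0$ into the attracting basin of a fixed point and use the uniform expansion to bound the dilatation of all iterates. This yields a UQR map $f\colon\overline{\R^3}\to\overline{\R^3}$ whose Julia set is the Cantor set $X:=\bigcap_{n\ge0}f^{-n}(M_0)$.

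It remains to identify the local genera. Every component occurring in the canonical defining sequence $\{f^{-n}(M_0)\}$ of $X$ is a $3$-ball or a copy of a genus $g$ handlebody, so $g_x(X)\le g$ for every $x\in X$ and hence $g(X)\le g$. For the lower bound, the subset $X'$ of points of $X$ whose address never uses $B$ is, by construction, precisely the Cantor set of Theorem~\ref{thm:1}, embedded in $X$. From any defining sequence of $X$ one obtains a defining sequence of $X'$ by deleting the components that miss $X'$, and this does not change the component containing a given point of $X'$; hence $g_y(X)\ge g_y(X')=g$ for every $y\in X'$. Therefore $g(X)=g$, and the points of $X'$ realize local genus $g$. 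Finally, let $x^\ast$ be the point of $X$ with constant address $B$; for every $n\ge1$ the component of $f^{-n}(M_0)$ containing $x^\ast$ is a ball, so the truncated sequence $\{f^{-n}(M_0)\}_{n\ge1}$, still a defining sequence of $X$, shows $g_{x^\ast}(X)=0$. Thus $X$ has points of local genus $g$ and points of local genus $0$, which is what the theorem asserts; by Theorem~\ref{thm:4} and Corollary~\ref{cor:1} both of these sets are dense in $X$, which is consistent since each is meagre.

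I expect the substantive work to lie in the dynamical realization rather than the topology: given $\Psi$, the genus computation above is essentially automatic and invokes Theorem~\ref{thm:1} only as a black box, through the restriction-of-defining-sequences step. The point requiring genuine care is to check that $\Psi$ meets the hypotheses of the realization scheme behind Theorem~\ref{thm:1} --- that the iterates of $f$ have uniformly bounded dilatation and that $J(f)$ is exactly $X$ --- once the genus $g$ pieces are supplemented by the ball $B$. Since $B$ is the benign kind of piece and the expansion estimates for $\psi_B$ are elementary, I anticipate that the machinery of Theorem~\ref{thm:1} applies with only cosmetic modifications, with $B$ contributing one extra symbol to the underlying subshift and one extra repelling fixed point, namely $x^\ast$, to $J(f)$.
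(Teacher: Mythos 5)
Your topological bookkeeping (monotonicity of local genus when passing to the embedded copy of $X_g$, and the ball-address point having local genus $0$) is fine, but the step you dismiss as requiring only ``cosmetic modifications'' --- realizing the enlarged Cantor set as a Julia set --- is precisely where your scheme breaks down and where the paper has to do its real work. In the construction behind Theorem~\ref{thm:1} the first-level pieces do not map onto a ball: each piece $T^g_{g,i,j}$ maps homeomorphically onto the genus $g$ torus $T^g$, the gap $T^g\setminus\bigcup_{i,j}\int(T^g_{g,i,j})$ carries the branched cover $F$ of Proposition~\ref{prop:1}, and the degree of $F$ is forced to equal the number $m$ of pieces (each inner boundary torus maps with degree one onto $\partial T^g$, while the outer boundary $\partial T^g$ maps with degree $\deg F$ onto the outer sphere); the folding sequence of Section~\ref{sec:seq} exists exactly to make this count come out right, and Theorem~\ref{thm:BE} then glues along boundaries of matching degree. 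A round ball $B$ cannot be inserted as an additional piece of this kind: $B$ cannot be mapped onto $T^g$ at all, even as a branched cover, since $S^2$ admits no branched cover onto a closed orientable surface of genus $g\geq 1$; and if instead you map $B$ homeomorphically onto the big ball, the gap region acquires an extra boundary sphere mapping with degree one onto the outer sphere, so the total degree over the outer sphere becomes $\deg F+1$ while the degree over $\partial T^g$ remains $m=\deg F$, and no such branched cover exists --- the Berstein--Edmonds gluing cannot be performed. So ``feeding $\Psi$ into the extension scheme'' is not available as a black box, and the extra repelling fixed point with a ball-shaped nest of preimages is exactly the feature the dynamics cannot accommodate in this framework.

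The paper's proof avoids ball pieces altogether. It chooses a point $x$ in the range avoiding $F(\mathcal{B}(F))$, takes its $k=\deg F$ preimages $u_1,\dots,u_k$, and inserts $k$ satellite copies $A_j(T^g)$ of the genus $g$ torus inside small round balls $B(u_j,\epsilon_2)$, plus one copy $A_0(T^g)$ in the range; $F$ is then modified by isometries on $\overline{B}(u_j,\epsilon_2)\setminus A_j(T^g)$ and the bi-Lipschitz annulus theorem, and a degree-two involution quotient identifies $A_0(T^g)$ with $T^g$, which doubles the degree while keeping every boundary degree matched, so the machinery of Section~\ref{sec:julia} applies. The points of local genus $0$ then arise not from ball-shaped pieces of the dynamical defining sequence but from an alternative defining sequence in which the unlinked satellite tori are engulfed by the disjoint balls $B(u_j,\epsilon_2)$: the point with constant satellite address has a ball component at every level. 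Your local genus computation could survive essentially unchanged, but the construction of the map --- the part you deferred --- needs this different idea, not your ball symbol.
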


It would be interesting to know whether any finite collection of non-negative integers can be realized as the local genera of a Cantor Julia set.

\medskip

The paper is organized as follows. In Section \ref{sec:prelims} we recall some preliminary material on UQR maps and the genus of Cantor sets. In Section \ref{sec:construction} we construct a Cantor set $X_g$ for each $g\geq 1$. In Section \ref{sec:genus}, we prove that $X_g$ has genus $g$ and local genus $g$ at each point. In Section \ref{sec:julia} we complete the proof of Theorem \ref{thm:1} by constructing a UQR map with Julia set equal to $X_g$. In Section \ref{sec:genusjulia} we prove Theorem \ref{thm:3} and Theorem \ref{thm:4}. Finally, in Section \ref{sec:genusg0} we construct an example that proves Theorem \ref{thm:5}.

\subsection*{Acknowledgements}
We thank the referee for their valuable comments which have greatly improved the exposition of the paper.

\section{Preliminaries}\label{sec:prelims}

 We denote by $\overline{\R^n}$ the one point compactification of $\R^n$.

\subsection{Uniformly quasiregular mappings}

A continuous map $f: \R^n \to \R^n$ is called \emph{quasiregular} if $f$ belongs to the Sobolev space $W^{1,n}_{\text{loc}}(\R^n)$ and if there exists some $K\geq 1$ such that 
\begin{equation}\label{eq:QR1} 
|f'(x)|^n \leq K J_f(x) \qquad \text{for a.e. $x\in \R^n$}.
\end{equation}
Here $J_f$ denotes the Jacobian of $f$ at $x\in \R^n$ and $|f'(x)|$ the operator norm. 
If $f$ is quasiregular, then there exists $K'\geq 1$ such that
\begin{equation}\label{eq:QR2} 
J_f(x) \leq K' \min _{|h|=1}  |f'(x)(h)|^n \qquad \text{ for a.e. $x\in \R^n$}.
\end{equation}
The {\it maximal dilatation} $K(f)$ of a quasiregular map $f$ is the smallest $K$ that satisfies both equations \eqref{eq:QR1} and \eqref{eq:QR2}. The maximal dilatation $K(f)$ can be informally thought of as a quantity describing how much distortion $f$ has. The closer $K(f) \in [1,\infty) $ is to $1$, the closer $f$ is to a conformal map. If $K(f) \leq K$, then we say that $f$ is $K$-quasiregular. See Rickman's monograph \cite{Rickman} for a complete exposition on quasiregular mappings. 

Quasiregular mappings can be defined at infinity and also take on the value infinity. To do this, if $A:\overline{\R^n} \to \overline{\R^n}$ is a M\"obius map with $A(\infty) = 0$, then we require $f\circ A^{-1}$ or $A\circ f$ respectively to be quasiregular via the definition above.

{\it Bounded length distortion} maps, BLD for short, are a sub-class of quasiregular maps for which the finite length curves are mapped to curves of finite length, with uniform control on the length distortion. BLD maps are bi-Lipschitz around non-branch points (see below) and can be viewed as an intermediate step between quasiregular and bi-Lipschitz mappings. In a sense, BLD maps are to bi-Lipschitz maps what quasiregular maps are to quasiconformal maps.

The composition $f\circ g$ of two quasiregular mappings $f$ and $g$ is again quasiregular, but typically the maximal dilatation goes up. We say that $f:\overline{\R^n} \to \overline{\R^n}$ is uniformly quasiregular, abbreviated to UQR, if the maximal dilatations of all the iterates of $f$ are uniformly bounded above.

For a UQR map, the definitions of the Julia set and Fatou set are identical to those in complex dynamics: the Fatou set $F(f)$ is the domain of local normality of the family of the iterates and the Julia set $J(f)$ is the complement.

The \emph{branch set} $\mathcal{B}(f)$ of a UQR map $f: \overline{\R^n} \to \overline{\R^n}$ is the closed set of points in $\overline{\R^n}$ where $f$ does not define a local homeomorphism. The \emph{post-branch set} of non-injective UQR map $f$ is
\[ \mathcal{P}(f)=\overline{\{ f^m(\mathcal{B}(f) ) : m\geq 0 \}}.\]
The map $f$ is called hyperbolic if $J(f) \cap \mathcal{P}(f)$ is empty.

We will need the following result regarding injective restrictions of hyperbolic UQR maps near the Julia set.

\begin{lemma}[\cite{FV}, Lemma 3.3 and the proof of Theorem 3.4]
\label{lem:1}
Suppose that $n\geq 2$ and $f:\overline{\R^n} \to \overline{\R^n}$ is a hyperbolic UQR map {with $\infty \not\in J(f)$}. There exists $r_1 > 0$ such that if $x\in J(f)$, then $f$ is injective on $B(x,r_1)$. Moreover, there exists $N\in \N$ such that if $U$ is an $r_1$-neighbourhood of $J(f)$, then $\overline{f^{-N}(U)} \subset U$.
\end{lemma}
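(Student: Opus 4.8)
The plan is to prove the two assertions separately: the first by a soft compactness argument that exploits hyperbolicity, and the second by combining compactness with the structure of the Fatou set of a hyperbolic UQR map.

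\emph{Uniform injectivity radius.} I would start from the observation that hyperbolicity gives $J(f)\cap\mathcal P(f)=\emptyset$, and since $\mathcal B(f)\subseteq\mathcal P(f)$ this yields $J(f)\cap\mathcal B(f)=\emptyset$; as both sets are closed in the compact space $\S^n$, we get $\dist(J(f),\mathcal B(f))>0$, so $f$ is a local homeomorphism on a fixed open neighbourhood of $J(f)$. Then I would argue by contradiction: were there no single $r_1$ that works, there would be points $x_k\in J(f)$ together with distinct $y_k,z_k\in B(x_k,1/k)$ satisfying $f(y_k)=f(z_k)$. Passing to a subsequence we may assume $x_k\to x\in J(f)$, hence also $y_k,z_k\to x$. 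Since $x\notin\mathcal B(f)$, the map $f$ is injective on some ball $B(x,\rho)$, and for all large $k$ we have $B(x_k,1/k)\subseteq B(x,\rho)$, forcing $y_k=z_k$, a contradiction. This produces $r_1>0$, and it is important that $r_1$ may be shrunk freely, which the second part will require.

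\emph{Absorbing property.} Write $U$ for the $r_1$-neighbourhood of $J(f)$ and set $E=\S^n\setminus U$, a compact subset of the Fatou set $F(f)$. Taking complements in $\S^n$ and using that $f^N$ is continuous (so that preimages of open sets are open and commute with complementation) one sees that $\overline{f^{-N}(U)}\subseteq U$ is equivalent to $E\subseteq\interior(f^{-N}(E))$, which is in turn implied by $f^N(E)\subseteq\interior(E)$; hence it suffices to find $N$ with $f^N(E)$ lying well inside the complement of $U$. Here I would invoke the structure of the Fatou set of a hyperbolic UQR map --- for instance, when $J(f)$ is totally disconnected, $F(f)$ is the basin of a single attracting fixed point: there is a compact set $T\subseteq F(f)$ with $f(T)\subseteq T$ and $\dist(T,J(f))=:\delta_0>0$, such that the forward orbit of every point of $F(f)$ eventually enters $\interior(T)$ (one may take $T$ a small closed neighbourhood of the attracting fixed point, or of $\mathcal P(f)$). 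For each $y\in E$ I would pick $m_y$ with $f^{m_y}(y)\in\interior(T)$ and, by continuity, an open neighbourhood $V_y$ of $y$ with $f^{m_y}(V_y)\subseteq\interior(T)$; covering the compact set $E$ by finitely many $V_{y_1},\dots,V_{y_k}$ and setting $N=\max_i m_{y_i}$, the relation $f(T)\subseteq T$ gives, for $y\in V_{y_i}$, that $f^N(y)=f^{N-m_{y_i}}(f^{m_{y_i}}(y))\in T$. Thus $f^N(E)\subseteq T\subseteq\{z:\dist(z,J(f))\geq\delta_0\}$, and choosing $r_1<\delta_0$ at the outset (compatibly with the first part) yields $f^N(E)\subseteq\interior(E)$, as needed.

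The main obstacle is the structural input in the second part: the existence of a compact set $T\subseteq F(f)$, bounded away from $J(f)$, with $f(T)\subseteq T$ and absorbing every forward orbit. Granting that, the rest is either the elementary compactness argument above or routine bookkeeping with complements. The only additional care needed is to fix $r_1$ small enough to serve simultaneously the injectivity conclusion of the first part and the inequality $r_1<\delta_0$.
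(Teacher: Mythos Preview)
The paper does not prove this lemma; it quotes it from \cite{FV} (Lemma 3.3 and the proof of Theorem 3.4 there), so there is no in-paper proof to compare against directly. That said, your argument is essentially sound and close in spirit to the standard one.

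Your first part is correct and is exactly the usual compactness argument: hyperbolicity gives $\mathcal B(f)\cap J(f)=\emptyset$ via $\mathcal B(f)\subset\mathcal P(f)$ (note the paper's convention $m\ge 0$ in the definition of $\mathcal P(f)$ is needed here), and then compactness of $J(f)$ upgrades local injectivity to a uniform radius.

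For the second part, your reduction $\overline{f^{-N}(U)}\subset U \Longleftarrow f^N(E)\subset\interior(E)$ is clean and correct, and the covering/compactness extraction of a single $N$ is fine. You are right that the entire content lies in the existence of a compact, forward-invariant, absorbing $T\subset F(f)$ bounded away from $J(f)$. Two remarks on this:
\begin{itemize}
\item Your suggestion ``take $T$ a small closed neighbourhood of $\mathcal P(f)$'' does not by itself give $f(T)\subset T$; forward invariance of $\mathcal P(f)$ does not pass automatically to neighbourhoods. What one actually uses is that for a hyperbolic UQR map every periodic Fatou component is an immediate (super)attracting basin, so one can take $T$ to be a finite union of small closed neighbourhoods of the attracting cycles on which the appropriate iterate is a contraction. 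This classification input is available in the UQR literature and is what \cite{FV} relies on.
\item The statement of the lemma is for general hyperbolic UQR $f$, not only for totally disconnected $J(f)$; your parenthetical ``for instance, when $J(f)$ is totally disconnected \ldots'' is fine as motivation, but the absorbing set $T$ exists in general once one knows the Fatou classification, so you need not restrict.
\end{itemize}

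In short: the skeleton is correct, the bookkeeping is right, and you have correctly isolated the one nontrivial dynamical input. Just be precise about what furnishes the absorbing set $T$ (attracting cycles, not merely a neighbourhood of $\mathcal P(f)$), and note that the lemma as stated is not restricted to the Cantor-set case.
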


\subsection{Cantor sets and genus}

Recall that a Cantor set is any metric space homeomorphic to the usual Cantor ternary set. Two Cantor sets $E_1, E_2 \subset \overline{\R^n}$ are equivalently embedded (or ambiently homeomorphic) if there exists a homeomorphism $\psi:\overline{\R^n} \to \overline{\R^n}$ such that $\psi(E_1) = E_2$. If the Cantor set $E$ is equivalently embedded to the usual Cantor ternary set in a line, then $E$ is called tame. A Cantor set which is not tame is called wild.
We often assume that $\infty \notin E$ so we may consider $E\subset \R^n$.

Other examples of Cantor sets in $\overline{\R^n}$ are typically defined in terms of a similar construction to {that of the usual Cantor ternary set}, using an intersection of nested unions of compact $n$-manifolds with boundary. For Cantor sets in $\R^3$, the idea of defining sequences goes back to Armentrout \cite{Ar}. This can be easily generalized to Cantor sets in $\overline{\R^3}$ by applying a M\"obius map so as to move the Cantor set to $\R^3$.

\begin{definition}
	A {\it defining sequence} for a Cantor set $E\subset \R^3$ is a sequence $(M_i)$ of compact 3-manifolds with boundary such that
	\begin{enumerate}[(i)]
		\item each $M_i$ consists of disjoint polyhedral cubes with handles,
		
		\item {$M_{i+1}$ is contained in the interior of  $M_i$} for each $i$, and
		
		\item $E = \bigcap_i M_i$.
	\end{enumerate}
	We denote the set of all defining sequences for $E$ by $\mathcal{D}(E)$.
\end{definition}

{By \cite[Theorem 8]{Ar}, for every Cantor set in $\R^3$, there exists at least one defining sequence.}

{If $\mathcal{C}$ is a topological cube with handles, denote the number of handles of $\mathcal{C}$ by $g(\mathcal{C})$. For a disjoint union of cubes with handles $M = \sqcup_{i\in I} \mathcal{C}_i$, we set $g(M) = \sup \{ g(\mathcal{C}_i) : i\in I \}$.}
The genus of a Cantor set was introduced by \v{Z}eljko, see \cite[p. 350]{Z1}.

\begin{definition}\label{def:genus}
	Let $(M_i)$ be a defining sequence for the Cantor set $E \subset \R^3$. Define
		\[g(E;(M_i)) = \sup\{g(M_i) : i \geq 0 \}.\]
	Then we define the {\it genus} of the Cantor set $E$ as
		\[g(E) = \inf\{g(E;(M_i)) : (M_i) \in \mathcal{D}(E)\}. \]
	Now let $x \in E$. For each $i$, denote by $M_i^x$ the unique component of $M_i$ containing $x$. Similar to above, define
		\[g_x(E;(M_i)) = \sup\{g(M_i^x) : i \geq 0 \}.\]
	Then we define the {\it local genus} of $E$ at the point $x$ as
		\[g_x(E) = \inf\{g_x(E;(M_i)) : (M_i) \in \mathcal{D}(E)\}. \]
\end{definition}

\section{Construction of a genus $g$ self-similar Cantor set}\label{sec:construction}

\subsection{Some sequences}\label{sec:seq}
We start by defining a ``folding'' sequence that will help us keep track of various folding maps that will be required later on. For each $n\in\N$ and each $k\in\{1,\dots,n\}$ define $a_{n,k} \in \N$ with $a_{1,1}=1$ and for $m\in\N$
\begin{align*} 
a_{2m,k} &= 
\begin{cases}
a_{m,k} &\text{if $1\leq k \leq m$}\\
a_{m,2m-k+1} &\text{if $m+1\leq k \leq 2m$},
\end{cases}\\
a_{2m+1,k} &= 
\begin{cases}
a_{m+1,k} &\text{if $1\leq k \leq m$}\\
2a_{m+1,m+1} &\text{if $k = m+1$}\\
a_{m+1,2m-k+2} &\text{if $m+2\leq k \leq 2m+1$}.
\end{cases}
\end{align*}

\begin{lemma}
For all $n\in\N$ and $k\in\{1,\dots,n\}$, we have that $a_{n,n}= a_{n,1} = 1$ and $a_{n,k}\in \{1,2\}$.
\end{lemma}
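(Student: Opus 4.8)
The plan is to prove this by strong induction on $n$. The base case $n=1$ is immediate since $a_{1,1}=1\in\{1,2\}$. For the inductive step, I would assume that for all $m < n$ and all $k\in\{1,\dots,m\}$ we have $a_{m,k}\in\{1,2\}$, and then split into two cases according to the parity of $n$.

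If $n = 2m$ with $m\geq 1$, then $m < n$, and every value $a_{2m,k}$ is by definition equal to either $a_{m,k}$ (for $1\leq k\leq m$) or $a_{m,2m-k+1}$ (for $m+1\leq k\leq 2m$). In the latter range, the index $2m-k+1$ runs over $\{1,\dots,m\}$, so in both cases $a_{2m,k}$ equals $a_{m,j}$ for some $j\in\{1,\dots,m\}$, which lies in $\{1,2\}$ by the inductive hypothesis. If $n = 2m+1$ with $m\geq 1$, then $m+1\leq n$; note $m+1 < n = 2m+1$ precisely when $m\geq 1$, so $m+1<n$ and the inductive hypothesis applies at level $m+1$. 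For $1\leq k\leq m$ and for $m+2\leq k\leq 2m+1$ (where $2m-k+2$ runs over $\{1,\dots,m\}$), the value $a_{2m+1,k}$ equals $a_{m+1,j}$ for some $j\in\{1,\dots,m+1\}$ and hence lies in $\{1,2\}$.

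The one genuinely delicate point — and the main obstacle — is the middle case $k=m+1$ in the odd branch, where $a_{2m+1,m+1} = 2a_{m+1,m+1}$: a priori this could be as large as $4$. The key observation is that $a_{m+1,m+1}=1$ for all $m\geq 0$, so that the doubling never escapes $\{1,2\}$. I would establish this as an auxiliary claim, also by induction: from $a_{1,1}=1$ we get $a_{2,1}=a_{1,1}=1$ (using the even rule with $m=1$, $k=1$), $a_{2,2}=a_{1,1}=1$; then for the diagonal entry $a_{n,n}$, if $n=2m$ then $a_{2m,2m}=a_{m,2m-2m+1}=a_{m,1}$, and if $n=2m+1$ then $a_{2m+1,2m+1}=a_{m+1,2m-(2m+1)+2}=a_{m+1,1}$. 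So in either case $a_{n,n}=a_{\lceil n/2\rceil,1}$, and one checks similarly that $a_{n,1}=a_{\lceil n/2\rceil,1}$ for $n\geq 2$ (the even rule gives $a_{2m,1}=a_{m,1}$ and the odd rule gives $a_{2m+1,1}=a_{m+1,1}$), so descending repeatedly reaches $a_{1,1}=1$.

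Thus I would structure the write-up as: first prove the lemma $a_{n,1}=1$ and $a_{n,n}=1$ for all $n$ (a short descent argument), and then run the main strong induction above, invoking the diagonal fact exactly at the middle index of the odd case so that $2a_{m+1,m+1}=2\in\{1,2\}$. Everything else is routine bookkeeping of index ranges, so no step beyond the diagonal claim should cause difficulty.
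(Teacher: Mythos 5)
Your proof is correct and takes essentially the same approach as the paper: the crucial observation in both is that the diagonal entries satisfy $a_{n,n}=1$ (via $a_{n,n}=a_{n,1}=1$), so the doubling $a_{2m+1,m+1}=2a_{m+1,m+1}$ gives exactly $2$. The only difference is organizational — the paper proves the containment in $\{1,2\}$ and the fact $a_{n,1}=a_{n,n}=1$ simultaneously in one strengthened induction, whereas you isolate the diagonal fact as a separate descent lemma before running the main strong induction.
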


\begin{proof}
The proof of the claim is by induction on $n$. The claim is clear for $n=1$. Assume now the claim to be true for all integers $n < N$ for some $N\in\N$. 

If $N=2m$, then 
\[ \{ a_{2m,k}  : 1\leq k \leq 2m\} = \{ a_{m,k}  : 1\leq k \leq m\} \subset \{1,2\}\]
while $a_{2m,1} = a_{m,1} = 1$ and $a_{2m,2m} = a_{m,1} = 1$.

If $N=2m+1$, then 
\[ \{ a_{2m+1,k}  : 1\leq k \leq 2m, k\neq m+1\} = \{ a_{m+1,k}  : 1\leq k \leq m\} \subset \{1,2\}\]
and $a_{2m+1,m+1} = 2a_{m+1,m+1} =2$. Moreover, 
\[ a_{2m+1,1} = a_{m+1,1} = 1\quad \text{and}\quad a_{2m+1,2m+1} = a_{m+1,1} = 1. \qedhere\]
\end{proof}

For each $n\in\N$ we define a finite sequence $(c_{n,i})_{i=1}^n$ by  
\[ c_{n,i} =
\begin{cases}
1 &\text{ if $a_{n,i}=1$}\\
3 &\text{ if $a_{n,i}=2$}.
\end{cases}
\]
Set $C_{n,0}=0$ and for each $i\in\{1,\dots,n\}$, set 
\[ C_{n,i} = c_{n,1}+\cdots+c_{n,i}.\] 

For each $g\in\N$ we fix for the rest of the paper an odd square integer $N_g\in\N$ such that
\begin{equation}\label{eq:N} 
N_g\geq 20\sqrt{6}(8C_{g,g}+6).
\end{equation}
This important integer is related to the number of handlebodies required at each stage of the defining sequence of the Cantor set to be constructed.

\subsection{A $g$-ladder}\label{sec:ladder}

Fix $g\in\N$. To ease the notation, for the rest of Section \ref{sec:construction}, we write $N_g=N$, $c_{g,i}=c_i$ and $C_{g,i}=C_i$.

For each $i\in \{1,\dots,g\}$ define the planar simple closed curve
\begin{equation}\label{eq:gamma}
\gamma_{i} = \partial\left([C_{i-1}, C_{i} ]\times[0,1]\right).
\end{equation}
Note that each $\g_i$ is either a translated copy of $\partial [0,1]^2$ or a translated copy of $\partial ([0,3]\times[0,1])$. Define also the planar closed curve
\[ \gamma = \gamma_{1} \cup \cdots \cup  \gamma_{g}.\]
See Figure \ref{fig1} for $\gamma$ in the case that $g=6$. 

\begin{figure}
\includegraphics[width=\textwidth]{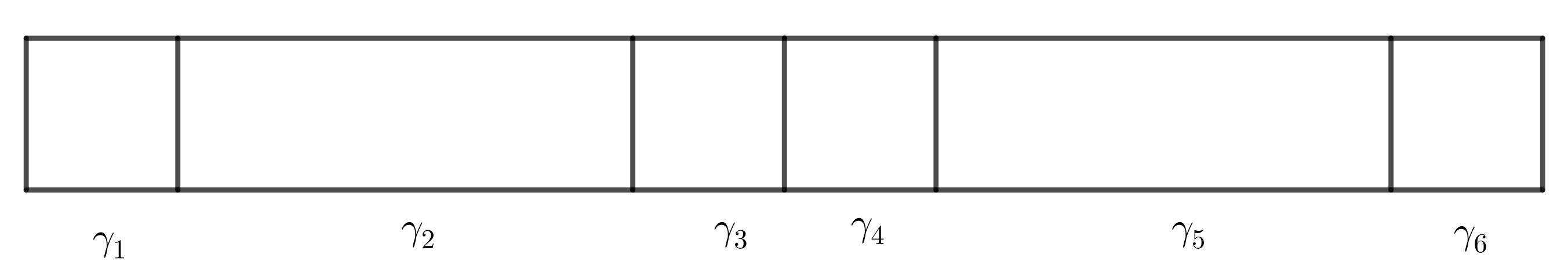}
\caption{The curve $\gamma$ in the case that $g=6$. In this case, $(a_{6,i}) = (1,2,1,1,2,1)$.}
\label{fig1}
\end{figure}

\subsection{A chain of $g$-ladders}\label{sec:second}

For each $i\in\{1,\dots,g\}$ and $j\in \{1,\dots,(2c_i+2)N\}$ we define rescaled copies $\tau_{i,j}$ of $\gamma$. Fix $i\in \{1,\dots,g\}$. Let $x_{i,1}= (C_{i-1}+\frac1{2N},1)$, $x_{i,c_{i}N+1}= (C_{i}-\frac1{2N},1)$, $x_{i,(c_{i}+1)N+1}= (C_{i}-\frac1{2N},0)$, $x_{i,(2c_{i}+1)N+1}= (C_{i-1}+\frac1{2N},0)$, and define also
\begin{align*}
x_{i,j} =
\begin{cases}
({C_{i-1}}+{(j-1)}/N,1),  &\text{ if $j=2,\dots,c_iN$}\\
({C_i},1-(j-c_iN-1)/N), &\text{ if $j=c_iN+2,\dots,(c_i+1)N$}\\
 ({C_i}-(j-(c_i+1)N-1)/N,0), &\text{ if $j=c_i(N+1)+2,\dots,(2c_i+1)N$}\\
({C_{i-1}},(j-(2c_i+1)N-1)/N), &\text{ if $j=(2c_i+1)N+2,\dots,(2c_i+2)N$}.\\
\end{cases}
\end{align*}
Therefore, the points $\{x_{i,1},\dots,x_{i,2(c_i+1)N}\}$ lie on $\g_i$ oriented clockwise.

For each $j\in\{1,\dots,2(c_i+1)N\}$, let $\sigma_{i,j}$ be a line segment in $\R^2$ centered at $x_{i,j}$, of length $8\sqrt{2}(5N)^{-1}$, such that 
\begin{enumerate}
\item $\sigma_{i,j}$ is {perpendicular} to $\gamma_i$ if $j\in\{1,c_iN+1, (c_i+1)N+1, (2c_i+1)N{+1}\}$;
\item $\sigma_{i,j}$ has slope $-1$ if 
\begin{enumerate}
\item $j\in \{2,4,\dots, c_iN-1\}$, 
\item or if $j\in \{c_iN+2,c_iN+4,\dots,(c_i+1)N-1\}$, 
\item or if $j\in \{(c_i+1)N+2,(c_i+1)N+4,\dots,(2c_i+1)N-1\}$, 
\item or if $j\in \{(2c_i+1)N+2,(2c_i+1)N+4,\dots,(2c_i+2)N-1\}$;
\end{enumerate}
\item $\sigma_{i,j}$ has slope $1$ if 
\begin{enumerate}
\item $j\in \{3,5,\dots, c_iN\}$, 
\item or if $j\in \{c_iN+3,c_iN+5,\dots,(c_i+1)N\}$, 
\item or if $j\in \{(c_i+1)N+3,(c_i+1)N+5,\dots,(2c_i+1)N\}$, 
\item or if $j\in \{(2c_i+1)N+3,(2c_i+1)N+5,\dots,(2c_i+2)N\}$.
\end{enumerate}
\end{enumerate}
See Figure \ref{fig2} for the case $c_i=1$ (for simplicity we have set $N=9$).

\begin{figure}[h]
\includegraphics[scale=0.7]{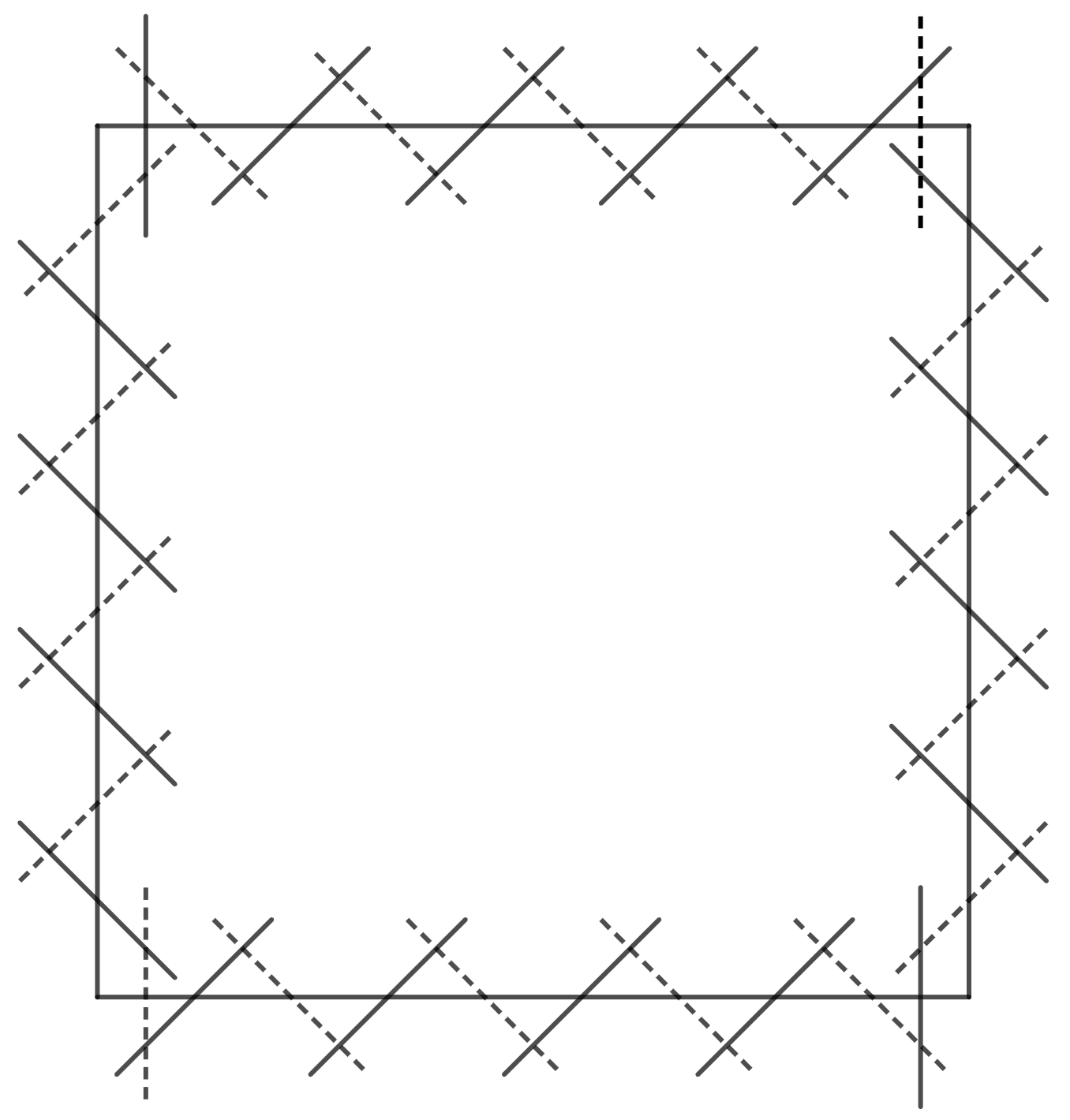}
\caption{The segments $\sigma_{i,j}$ in the case that $c_i=1$ and $N=9$. The segments $\sigma_{i,j}$ with $j$ even, are depicted as dotted segments.}
\label{fig2}
\end{figure}

\begin{lemma}\label{lem:sigma}
If $i,i' \in \{1,\dots,g\}$, $j \in \{1,\dots, 2(c_i+1)N\}$, and $j'\in\{1,\dots,2(c_{i'}+1)N\}$, then $\sigma_{i,j}\cap \sigma_{i',j'} \neq \emptyset$ if and only if one of the following is true:
\begin{enumerate}
\item $i=i'$ and ${j-j'\in\{-1,0,1\}}$ modulo $(2c_i+2)N$ (that is, either $j=j'$, or $x_{i,j}$, $x_{i,j'}$ are consecutive on {$\g_i$});
\item $|i- i'|=1$ and {$x_{i,j} = x_{i',j'}$}. 
\end{enumerate}
Moreover, if $\sigma_{i,j}\cap \sigma_{i',j'} = \emptyset$, then $\dist(\sigma_{i,j},\sigma_{i',j'}) \geq {(5\sqrt{2}N)^{-1}}$.
\end{lemma}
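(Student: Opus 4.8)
The plan is to verify the two claims of Lemma~\ref{lem:sigma} by carefully analyzing the positions $x_{i,j}$ and the lengths of the segments $\sigma_{i,j}$. First I would record the elementary metric fact that each $\sigma_{i,j}$ is a segment of length $8\sqrt2(5N)^{-1}$ centered at $x_{i,j}$, so $\sigma_{i,j}$ is contained in the closed disk of radius $4\sqrt2(5N)^{-1}$ about $x_{i,j}$; consequently $\sigma_{i,j}\cap\sigma_{i',j'}\neq\emptyset$ forces $|x_{i,j}-x_{i',j'}|\leq 8\sqrt2(5N)^{-1}$, and conversely if $|x_{i,j}-x_{i',j'}|\geq 8\sqrt2(5N)^{-1}+\delta$ for any $\delta>0$ the segments are disjoint with distance at least $\delta$. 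This reduces everything to understanding which pairs of centers $x_{i,j}$, $x_{i',j'}$ are within the critical distance $8\sqrt2(5N)^{-1}$ of each other, and computing the gap for the remaining pairs.

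Next I would go through the cases. Within a single ladder $\gamma_i$ (the case $i=i'$): consecutive points $x_{i,j}, x_{i,j+1}$ along $\gamma_i$ are at distance exactly $1/N$ along a side (or a comparable distance at a corner), while non-consecutive points are at distance at least $2/N$; since $1/N < 8\sqrt2(5N)^{-1}$ but $2/N > 8\sqrt2(5N)^{-1}$ (because $2 > 8\sqrt2/5 \approx 2.26$ is \emph{false}—so here one must be careful), one actually needs the slope pattern: the alternating slopes $\pm1$ at consecutive points mean consecutive segments genuinely cross near the midpoint, whereas the next-to-consecutive segments, even though their centers are at distance $2/N$, are nearly parallel (both slope $+1$ or both slope $-1$ among the even/odd subfamilies) and one checks the perpendicular separation is at least $\sqrt2/N \cdot$(something) $\geq \sqrt2(5N)^{-1}$. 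I would treat the corner points (where $j\in\{1,c_iN+1,(c_i+1)N+1,(2c_i+1)N\}$ and $\sigma_{i,j}$ is perpendicular to $\gamma_i$) separately, checking that such a vertical/horizontal segment meets only the two neighboring slanted segments and nothing else. For distinct ladders with $|i-i'|=1$, the curves $\gamma_i$ and $\gamma_{i'}$ share the common vertical edge $\{C_i\}\times[0,1]$ (or an endpoint), so the only centers that can coincide or be close are those lying on that shared edge; by the explicit formulas those coincide exactly when $x_{i,j}=x_{i',j'}$, giving case~(ii), and all other cross-ladder pairs are separated by at least $1/N$ horizontally plus the geometry, again yielding the $\sqrt2(5N)^{-1}$ bound. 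For $|i-i'|\geq2$ the ladders are separated by at least distance $1$ (a full unit cell), which dwarfs $8\sqrt2(5N)^{-1}$, so those segments are always disjoint.

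The main obstacle I anticipate is the borderline arithmetic in the same-ladder, non-consecutive case: two centers at distance exactly $2/N$ with segments of ``radius'' $4\sqrt2(5N)^{-1} = 4\sqrt2/(5N) \approx 1.13/N$ have overlapping enclosing disks (since $2\cdot1.13/N > 2/N$), so the crude disk bound is \emph{not} enough and one must use the slope information — specifically that among points $x_{i,j}$ with $j$ of a fixed parity in a fixed side, the segments $\sigma_{i,j}$ all have the same slope ($+1$ or $-1$) and their centers are spaced $2/N$ apart along a horizontal or vertical line, so the segments lie on a family of parallel lines whose consecutive perpendicular distance is $(2/N)/\sqrt2 = \sqrt2/N$, comfortably exceeding the segment half-length's contribution and leaving a gap $\geq \sqrt2(5N)^{-1}$. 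I would organize this as a short sublemma: parallel segments of slope $\pm1$ and length $8\sqrt2(5N)^{-1}$ centered at points $2/N$ apart on a horizontal line are disjoint with distance $\geq \sqrt2(5N)^{-1}$. Once that sublemma is in hand, all remaining cases are handled by the crude disk estimate, and the corner-point cases by direct inspection of the finitely many nearby segments. Assembling these gives both the characterization of intersecting pairs and the uniform separation $\sqrt2(5N)^{-1}$ for the disjoint ones.
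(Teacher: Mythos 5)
Your overall strategy is the same as the paper's: reduce to the distance between the centers $x_{i,j}$, use a crude ``enclosing disk'' bound for well-separated centers, use parallelism of equal-slope segments for centers $2/N$ apart on a common edge, and inspect the finitely many exceptional configurations near the corners. For the same-ladder case ($i=i'$) and the case $|i-i'|\geq 2$ your plan, once carried out, would reproduce the paper's Cases 1 and 2.

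There is, however, a genuine gap in your treatment of adjacent ladders, $|i-i'|=1$. You claim that apart from coincident centers, ``all other cross-ladder pairs are separated by at least $1/N$ horizontally,'' but this is false for the pairs lying on the shared vertical edge of $\gamma_i$ and $\gamma_{i+1}$: there the two families of centers interleave along the \emph{same} line, so non-coincident pairs are separated only vertically, by $1/N$ or $2/N$, which is well inside the critical radius $8\sqrt{2}(5N)^{-1}\approx 2.26/N$. Hence neither your crude disk estimate nor your corner inspection applies, and these pairs occur along the whole shared edge, not just near corners. Moreover the disjointness here is delicate: for centers $1/N$ apart, segments of half-length $4\sqrt{2}(5N)^{-1}$ with \emph{opposite} slopes would actually cross (the intersection point of the two lines lies at distance $\tfrac{\sqrt{2}}{2N}<\tfrac{4\sqrt{2}}{5N}$ from each center), so one must verify from the slope/parity design that same-edge segments from the two ladders at spacing $1/N$ are parallel --- this is exactly the paper's Case 3.2, illustrated in Figure \ref{fig3}. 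For spacing $2/N$ on the shared edge the segments have opposite slopes, so your parallel-segments sublemma does not apply either, and a separate computation is needed (the paper gets $\dist = 2\sqrt{2}(5N)^{-1}$, noticeably smaller than the $\sqrt{2}/N$ your sublemma would predict). A similar near-but-not-at-a-corner configuration (a top-edge point of $\gamma_i$ against a shared-edge point of $\gamma_{i+1}$, distance as small as $\sqrt{5}/(2N)$) also escapes both of your tools and is handled in the paper's Case 3.4. So the proposal needs an explicit analysis of the shared-edge interactions, keyed to the slope assignment, before it closes the $|i-i'|=1$ case.
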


\begin{proof}
To prove the lemma, we consider three possible cases.

\emph{Case 1: $i=i'$}. If $j=j'$, then clearly $x_{i,j}=x_{i',j'}$ and $\sigma_{i,j}\cap \sigma_{i',j'} \neq \emptyset$. We assume for the rest of Case 1 that $j\neq j'$ and consider {five} subcases. 

\emph{Case 1.1: ${j-j'\in\{-1,1\}}$ modulo $(2c_i+2)N$}. It is easy to see by the design of the segments $\sigma_{i,j}$ that $\sigma_{i,j}\cap \sigma_{i',j'} \neq \emptyset$. 

\emph{Case 1.2: ${j-j'\in\{-2,2\}}$ modulo $(2c_i+2)N$ and $x_{i,j},x_{i,j'}$ are both on the left edge or both are in the right edge}. Then $\sigma_{i,j}$ and $\sigma_{i,j'}$ are parallel and $\dist(\sigma_{i,j},\sigma_{i,j'}) = \sqrt{2}/N >(5\sqrt{2}N)^{-1}$. 

\emph{Case 1.3: ${j-j'\in\{-2,2\}}$ modulo $(2c_i+2)N$ and $x_{i,j},x_{i,j'}$ are both on the top edge or both are on the bottom edge}. Without loss of generality, assume that $x_{i,j},x_{i,j'}$ are both on the top edge. Suppose first that $j,j' \not\in \{1,c_iN+1\}$. Then, $\sigma_{i,j}$ and $\sigma_{i,j'}$ are parallel and $\dist(\sigma_{i,j},\sigma_{i,j'}) = \sqrt{2}/N > (5\sqrt{2}N)^{-1}$. Suppose now that $j=1$; the other case is similar. Then, elementary calculations show that 
\[ \dist(\sigma_{i,j},\sigma_{i,j'}) = \tfrac{7}{16}8\sqrt{2}(5N)^{-1} >(5\sqrt{2}N)^{-1}.\] 

\emph{Case 1.4: ${j-j'\in\{-2,2\}}$ modulo $(2c_i+2)N$ and $x_{i,j},x_{i,j'}$ are not on the same edge}. Without loss of generality assume that $x_{i,j}$ is on the left edge and $x_{i,j'}$ is on the top edge. If $j=2(c_i+1)N$ and $j'=2$, then elementary calculations show that
\[ \dist(\sigma_{i,j},\sigma_{i,j'}) = \tfrac18 8\sqrt{2}(5N)^{-1} >(5\sqrt{2}N)^{-1}.\]
If $j=2(c_i+1)N-1$ and {$j'=1$}, then elementary calculations show that
\[ \dist(\sigma_{i,j},\sigma_{i,j'}) = {\sqrt{1965+1280\sqrt{2}}(10N)^{-1}}>(5\sqrt{2}N)^{-1}.\] 

\emph{Case 1.5: ${j-j' \not\in \{-2,-1,0,1,2\}}$ modulo $(2c_i+2)N$}. Note that $|x_{i,j}-x_{i,j'}| \geq \sqrt{5}/N$. Suppose first that $|x_{i,j}-x_{i,j'}| = \sqrt{5}/N$. This is the case where one of the two points $x_{i,j}, x_{i,j'}$ (say $x_{i,j}$) is either on the left or on the right edge, and the other point $x_{i,j'}$ is either on the top or on the bottom edge. In either case, $\sigma_{i,j}$ is parallel to $\sigma_{i,j'}$ and elementary calculations give
\[ \dist(\sigma_{i,j},\sigma_{i,j'}) \geq (\sqrt{2}N)^{-1} > (5\sqrt{2}N)^{-1}.\]
In the case that $|x_{i,j}-x_{i,j'}| > \sqrt{5}/N$ we have that $|x_{i,j}-x_{i,j'}| \geq 3/N$. Fix a point $z\in \sigma_{i,j}$ and a point $z'\in\sigma_{i',j'}$. Then,
\[ |z-z'| \geq |x_{i,j}-x_{i',j'}| - 2(4\sqrt{2}(5N)^{-1}) \geq 3/N - 8\sqrt{2}(5N)^{-1} > (5\sqrt{2}N)^{-1}.\]
Therefore, $\dist(\sigma_{i,j},\sigma_{i,j'}) > (5\sqrt{2}N)^{-1}$.

\emph{Case 2: $|i-i'|\geq 2$}. Fix a point $z\in \sigma_{i,j}$ and a point $z'\in\sigma_{i',j'}$. By the choice of $N$,
\begin{align*} 
|z-z'| \geq |x_{i,j}-x_{i',j'}| - 2(4\sqrt{2}(5N)^{-1}) &\geq \dist(\gamma_i,\gamma_{i'}) - 8\sqrt{2}(5N)^{-1}\\ 
&= 1- 8\sqrt{2}(5N)^{-1}\\ 
&> (5\sqrt{2}N)^{-1}.
\end{align*}
Therefore, $\dist(\sigma_{i,j},\sigma_{i,j'}) > (5\sqrt{2}N)^{-1}$.

\emph{Case 3: $|i-i'| =1$}. Without loss of generality, we assume that $i' = i+1$. There are four subcases to consider.

\emph{Case 3.1: $x_{i,j}$ is on the top edge of $\g_i$ and $x_{i+1,j'}$ is on the top edge of $\g_{i+1}$.} If $x_{i,j}$ is one of the two rightmost points on the top edge of $\g_i$, and $x_{i+1,j'}$ is one of the two leftmost points on the top edge of $\g_{i+1}$, then elementary calculations show that 
\[ \dist(\sigma_{i,j},\sigma_{i+1,j'}) \geq {2(5N)^{-1}} > (5\sqrt{2}N)^{-1}.\]
Otherwise,
\begin{align*} \dist(\sigma_{i,j},\sigma_{i+1,j'}) &\geq |x_{i,j}-x_{i+1,j'}| - 2(4\sqrt{2}(5N)^{-1})\\
&\geq \frac{25}{16} 8\sqrt{2}(5N)^{-1} - 2(4\sqrt{2}(5N)^{-1})\\ 
&> (5\sqrt{2}N)^{-1}.
\end{align*}
We may similarly treat the case that $x_{i,j}$ is on the bottom edge of $\g_i$ and $x_{i+1,j'}$ is on the bottom edge of $\g_{i+1}$.

\emph{Case 3.2: both $x_{i,j}$ and $x_{i+1,j'}$ are on the common edge of $\g_0^i$ and $\g^{i+1}_0$}. If $x_{i,j}=x_{i+1,j'}$, then trivially $\sigma_{i,j}\cap \sigma_{i+1,j'} \neq \emptyset$. If $|x_{i,j}-x_{i+1,j'}| = 1/N$, then $\sigma_{i,j}$ is parallel to $\sigma_{i+1,j'}$ and 
\[ \dist(\sigma_{i,j},\sigma_{i+1,j'}) = \tfrac5{16}8\sqrt{2}(5N)^{-1} = (\sqrt{2}N)^{-1} > (5\sqrt{2}N)^{-1}.\]
If $|x_{i,j}-x_{i+1,j'}| = 2/N$, then elementary calculations give
\[ \dist(\sigma_{i,j},\sigma_{i+1,j'}) = {2(5N)^{-1}} > (5\sqrt{2}N)^{-1}.\]
If $|x_{i,j}-x_{i+1,j'}| \geq 3/N$, then working as in Case 1.5, we get $\dist(\sigma_{i,j},\sigma_{i,j'}) > (5\sqrt{2}N)^{-1}$.

\emph{Case 3.3: $x_{i,j}$ is on the top edge of $\g_i$ and $x_{i+1,j'}$ is on the bottom edge of $\g_{i+1}$}. Then, $|x_{i,j}-x_{i+1,j'}| \geq 1$ and working as in Case 2, $\dist(\sigma_{i,j},\sigma_{i,j'}) > (5\sqrt{2}N)^{-1}$. We may similarly treat the case where $x_{i,j}$ is on the bottom edge of $\g_i$ and $x_{i+1,j'}$ is on the top edge of $\g_{i+1}$.

\emph{Case 3.4: $x_{i,j}$ is on the top edge of $\g_i$ and $x_{i+1,j'}$ is on the common edge of $\g_{i+1}, \g_{i+1}$}. Suppose first that $x_{i+1,j'}$ is one of the top 2 points on the left edge of $\g_{i+1}$ and $x_{i,j}$ is one of the three rightmost points of the top edge of $\g_i$. Then, elementary calculations show that
\[ \dist(\sigma_{i,j},\sigma_{i+1,j'}) \geq {(15-8\sqrt{2})(10\sqrt{2}N)^{-1} \geq (5\sqrt{2}N)^{-1}}.\]
Suppose now that neither $x_{i+1,j'}$ is one of the top 2 points on the left edge of $\g_{i+1}$, nor $x_{i,j}$ is one of the three rightmost points of the top edge of $\g_i$. Then, $|x_{i,j}-x_{i+1,j'}| \geq \sqrt{10}/N$ and working as in Case 1.5 we have $\dist(\sigma_{i,j},\sigma_{i,j'}) > (5\sqrt{2}N)^{-1}$.
\end{proof}

\begin{figure}[h]
\includegraphics[scale=0.8]{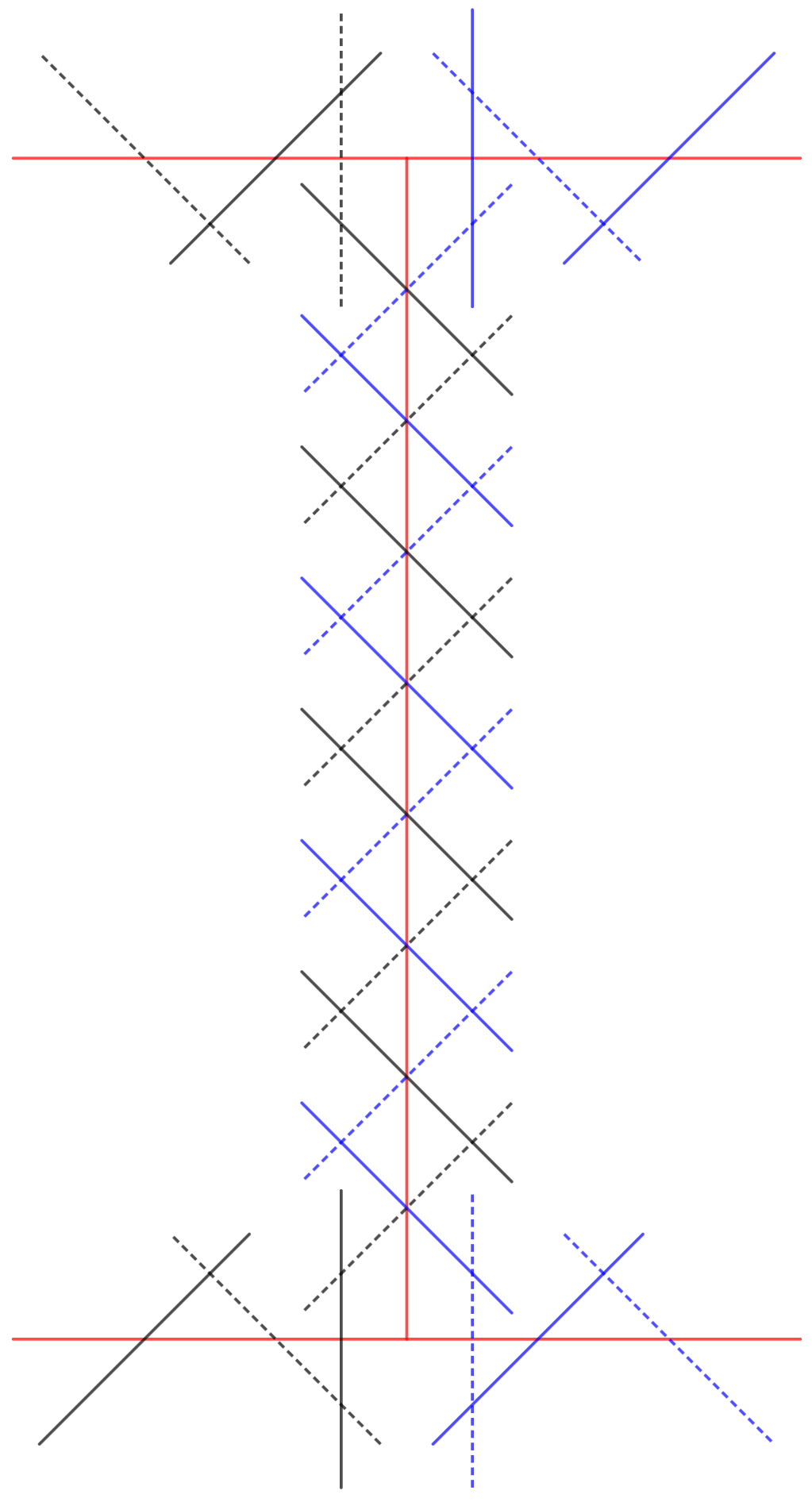}
\caption{The segments $\sigma_{i,j}$ (in black) and $\sigma_{i+1,j}$ (in blue) on a shared edge (in red) of $\gamma_i, \gamma_{i+1}$. As with Figure \ref{fig2}, for simplicity, we assume $N=9$.}
\label{fig3}
\end{figure}

Now for each $j\in \{1,\dots,2(c_i+1)N\}$ we define a copy $\tau_{i,j}$ of $\gamma$ in $\R^3$, scaled down by a factor of $8\sqrt{2}(5N)^{-1}$ with the following rules:
\begin{enumerate}
\item The projection of $\tau_{i,j}$ onto $\R^2\times\{0\}$ is the segment $\sigma_{i,j}$.
\item If $j$ is odd, then the projection of $\tau_{i,j}$ onto the $z$-axis is the segment 
\[ \{(0,0)\}\times [(-4C_g-1)\tfrac{\sqrt{2}}{5N}, (4C_g-1)\tfrac{\sqrt{2}}{5N}]. \]
\item If $j$ is even, then the projection of $\tau_{i,j}$ onto the $z$-axis is the segment 
\[ \{(0,0)\}\times [(-4C_g+1)\tfrac{\sqrt{2}}{5N}, (4C_g+1)\tfrac{\sqrt{2}}{5N}].\]
\end{enumerate}

For each $i \in \{1,\dots,g\}$ and $j \in \{1,\dots, 2(c_i+1)N\}$, we let $\psi_{i,j}:\R^3 \to \R^3$ be a similarity with scaling factor $8\sqrt{2}(5N)^{-1}$ such that $\psi_{i,j}(\gamma) = \tau_{i,j}$, and the image of the left edge of $\gamma$ is mapped to the edge of $\tau_{i,j}$, parallel to the $xy$-plane and with the highest third coordinate.

\begin{figure}[h]
\centering \includegraphics[scale=0.5]{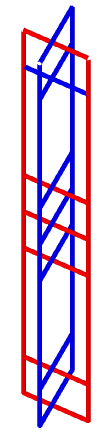}
\caption{The two curves $\tau_{i,j}$ (blue) and $\tau_{i',j'}$ (red) linked. Here we have chosen $g=6$.}
\label{fig4}
\end{figure}

\begin{lemma}\label{lem:ladders}
Let $i,i' \in \{1,\dots,g\}$, $j \in \{1,\dots, 2(c_i+1)N\}$, and $j'\in\{1,\dots,2(c_{i'}+1)N\}$ with $(i,j) \neq (i',j')$. 
\begin{enumerate}
\item We have that $\dist(\tau_{i,j},\tau_{i',j'}) \geq {(5\sqrt{2}N)^{-1}}$.
\item For all $x\in \tau_{i,j}$ we have that $\dist(x,\gamma_i) \leq (8C_g+6)\sqrt{2}(5N)^{-1}$.
\item We have that $\tau_{i,j}$ is linked with $\tau_{i',j'}$ if and only if $\sigma_{i,j}\cap\sigma_{i',j'}\neq \emptyset$. In the case they are linked, we have that $\psi_{i,j}(\gamma_k)$ is linked with $\psi_{i',j'}(\gamma_k)$.
\end{enumerate} 
\end{lemma}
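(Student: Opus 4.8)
\emph{Proof plan.}
The plan is to first pin down the shape of the pieces, then dispatch (2), (1) and (3) in turn. Write $\lambda=8\sqrt{2}(5N)^{-1}$ for the common similarity ratio of the maps $\psi_{i,j}$, and let $\pi\colon\R^{3}\to\R^{2}\times\{0\}$ be vertical projection. By construction $\tau_{i,j}=\psi_{i,j}(\gamma)$ lies in the vertical plane $V_{i,j}$ over the line through $\sigma_{i,j}$, with $\pi(\tau_{i,j})=\sigma_{i,j}$; $\psi_{i,j}$ carries the edge $\{0\}\times[0,1]$ of $\gamma$ to the topmost horizontal segment and the edge $[0,C_g]\times\{0\}$ to a vertical one, so $\tau_{i,j}$ is a ``ladder'': two vertical rails over the two endpoints of $\sigma_{i,j}$, each of $z$-length $\lambda C_g$, joined by $g+1$ horizontal rungs at heights $z^{i,j}_{0}>\dots>z^{i,j}_{g}$ with $z^{i,j}_{k-1}-z^{i,j}_{k}=\lambda c_{k}$, and $\psi_{i,j}(\gamma_{k})$ is precisely the rectangular ``cell'' between rungs $k-1$ and $k$ (so every ladder carries the cells $\gamma_{1},\dots,\gamma_{g}$ in the same top-to-bottom order). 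Reading off the two prescribed $z$-intervals, if $j,j'$ have opposite parity then the rungs of $\tau_{i,j}$ and $\tau_{i',j'}$ are offset, $z^{i',j'}_{k}=z^{i,j}_{k}+\varepsilon_{k}$ with $2\sqrt{2}(5N)^{-1}\le|\varepsilon_{k}|\le3\sqrt{2}(5N)^{-1}<\lambda$ for all $k$. Granting this, (2) is immediate: for $x\in\tau_{i,j}$ we have $\pi(x)\in\sigma_{i,j}$, so $|\pi(x)-x_{i,j}|\le\tfrac12|\sigma_{i,j}|=4\sqrt{2}(5N)^{-1}$ and $|z(x)|\le(4C_g+1)\sqrt{2}(5N)^{-1}$; since $x_{i,j}\in\gamma_{i}\subset\R^{2}\times\{0\}$, $\dist(x,\gamma_{i})\le|\pi(x)-x_{i,j}|+|z(x)|\le(4C_g+5)\sqrt{2}(5N)^{-1}<(8C_g+6)\sqrt{2}(5N)^{-1}$.

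For (1) I would split on whether $\sigma_{i,j}\cap\sigma_{i',j'}=\emptyset$. If it is empty, then $\dist(\tau_{i,j},\tau_{i',j'})\ge\dist(\sigma_{i,j},\sigma_{i',j'})$ since $\pi$ is $1$-Lipschitz, and tracing back through the cases of Lemma~\ref{lem:sigma} — in the tightest of which the segments are non-parallel, so $V_{i,j},V_{i',j'}$ splay apart from their common vertical line and the genuine $3$-dimensional distance exceeds the planar one — gives $\ge3(10N)^{-1}$. If $\sigma_{i,j}\cap\sigma_{i',j'}\ne\emptyset$, then by Lemma~\ref{lem:sigma} either $i=i'$ and $|j-j'|\equiv1$, or $|i-i'|=1$ with $x_{i,j}=x_{i',j'}$; in both cases $V_{i,j}\ne V_{i',j'}$ meet along a vertical line $L$ with $\pi(L)\in\sigma_{i,j}\cap\sigma_{i',j'}$, at angle $\ge45^{\circ}$, and a direct check in each configuration (straight edge, corner, junction) gives that $\pi(L)$ lies on each of $\sigma_{i,j},\sigma_{i',j'}$ at distance at least $3\sqrt{2}(10N)^{-1}$ from both endpoints, hence strictly between the two rails of each ladder. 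Now suppose $p\in\tau_{i,j}$, $q\in\tau_{i',j'}$ with $|p-q|<3(10N)^{-1}$; then $|\pi(p)-\pi(q)|<3(10N)^{-1}$, and since $\sigma_{i,j},\sigma_{i',j'}$ meet only at $\pi(L)$ and at angle $\ge45^{\circ}$, both $\pi(p),\pi(q)$ lie within $3\sqrt{2}(10N)^{-1}$ of $\pi(L)$ along their segments, so $p,q$ lie between the rails of their ladders, that is, on rungs: $z(p)=z^{i,j}_{k}$, $z(q)=z^{i',j'}_{l}$. As $j,j'$ have opposite parity (automatic when $i=i'$; forced by the folding sequence $(a_{n,k})$ at a junction $x_{i,j}=x_{i+1,j'}$), the offset estimate gives $|z(p)-z(q)|\ge\min\bigl(\min_{k}|\varepsilon_{k}|,\ \lambda-\max_{k}|\varepsilon_{k}|\bigr)>3(10N)^{-1}$, so $|p-q|\ge|z(p)-z(q)|>3(10N)^{-1}$ — a contradiction. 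Hence $\dist(\tau_{i,j},\tau_{i',j'})\ge3(10N)^{-1}$.

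For (3), the ``not linked'' implication is soft: if $\sigma_{i,j}\cap\sigma_{i',j'}=\emptyset$, take a topological disk $U\subset\R^{2}$ with $\sigma_{i,j}\subset U$ and $U\cap\sigma_{i',j'}=\emptyset$; then $\tau_{i,j}\subset U\times\R$, which is simply connected and disjoint from $\tau_{i',j'}$, so each component of $\tau_{i,j}$ bounds a disk in $U\times\R$ avoiding $\tau_{i',j'}$ and the ladders are unlinked. For the ``linked'' implication, assume $\sigma_{i,j}\cap\sigma_{i',j'}\ne\emptyset$ and keep $L$, $\pi(L)$ and the opposite parities from above. Fix $k\in\{1,\dots,g\}$ and let $D_{k}\subset V_{i,j}$ be the rectangular disk bounded by $\psi_{i,j}(\gamma_{k})$; since $\pi(L)\in\sigma_{i,j}$ and $\pi(D_{k})=\sigma_{i,j}$, $L$ meets $D_{k}$ in a vertical segment at heights $[z^{i,j}_{k},z^{i,j}_{k-1}]$. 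Because $\pi(L)$ is strictly between the rails of $\tau_{i',j'}$, the curve $\psi_{i',j'}(\gamma_{k})$ meets $V_{i,j}$ exactly at its two rung points on $L$, at heights $z^{i',j'}_{k-1}=z^{i,j}_{k-1}+\varepsilon_{k-1}$ and $z^{i',j'}_{k}=z^{i,j}_{k}+\varepsilon_{k}$; and since $0<|\varepsilon_{k}|<\lambda c_{k}=z^{i,j}_{k-1}-z^{i,j}_{k}$, exactly one of these lies in the open interval $(z^{i,j}_{k},z^{i,j}_{k-1})$ — the second if $\varepsilon_{k}>0$, the first if $\varepsilon_{k}<0$. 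Thus $\psi_{i',j'}(\gamma_{k})$ meets $D_{k}$ transversally in exactly one point, so $\psi_{i,j}(\gamma_{k})$ and $\psi_{i',j'}(\gamma_{k})$ have linking number $\pm1$; in particular they are linked, and hence so are $\tau_{i,j}$ and $\tau_{i',j'}$. The equivalence in (3) then follows, the ``only if'' being the contrapositive of the ``not linked'' case.

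I expect the real obstacle to be the bookkeeping that makes the chain close up: (a) verifying, in each of the few positions of $j$ on $\gamma_i$ and at every junction $x_{i,j}=x_{i+1,j'}$, that the relevant pair of segments meets at a point comfortably inside the rail span of both ladders — this is where the exact placement of the $x_{i,j}$ near corners, and the crossing angle being $45^{\circ}$ rather than $90^{\circ}$, has to be dealt with; and (b) checking that the folding sequence $(a_{n,k})$ genuinely forces opposite parities — equivalently, transverse rather than coincident segments — at each junction $x_{i,j}=x_{i+1,j'}$, since coincident segments would make two cells either intersect or fail to link. Everything else is a $1$-Lipschitz projection estimate or a single-intersection linking-number count.
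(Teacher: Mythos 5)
Your proposal follows essentially the same route as the paper: part (2) is the same one--line estimate (your bound $(4C_g+5)\sqrt{2}(5N)^{-1}$ is in fact sharper than the stated one); part (1) splits on whether the projections meet, using that projection is $1$-Lipschitz in the disjoint case and the vertical parity offset in the crossing case; and part (3) proves unlinkedness by confining the two ladders to disjoint vertical slabs and linkedness via the quarter-length vertical shift of the corresponding cells. Where the paper simply asserts ``it is easy to see'' that the shifted congruent cells $\psi_{i,j}(\gamma_k)$, $\psi_{i',j'}(\gamma_k)$ are linked, you make this precise with an odd-intersection-number count against the rectangular filling disk of $\psi_{i,j}(\gamma_k)$, and you correctly isolate the two facts the paper leaves implicit: that the crossing point lies well inside both segments, and that intersecting segments always belong to ladders of opposite parity (at a junction $x_{i,j}=x_{i+1,j'}$ this is forced by $N$ being odd and $c_i\in\{1,3\}$ odd, since $j+j'=N(c_i+2c_{i+1}+2)+2$ is then odd). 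Those are genuine improvements in rigor over the published argument.

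The one step I would not accept as written is the ``splaying'' upgrade in the disjoint-projection case of (1). If $\sigma_{i,j}\cap\sigma_{i',j'}=\emptyset$, the only honest bound available from Lemma \ref{lem:sigma} is $\dist(\tau_{i,j},\tau_{i',j'})\geq\dist(\sigma_{i,j},\sigma_{i',j'})\geq\sqrt{2}(5N)^{-1}$, and your claim that non-parallelism makes the three-dimensional distance strictly exceed the planar one is false in general: a vertical rail sits over an endpoint of a segment and spans the whole $z$-interval, and when $j,j'$ have the same parity the rung heights coincide, so the closest planar pair is realized at equal heights and the spatial distance equals the planar one. Thus your argument (exactly like the paper's own, which in this case also only yields $\sqrt{2}(5N)^{-1}<3(10N)^{-1}$) does not reach the stated constant $3(10N)^{-1}$ in the disjoint case. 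This is a discrepancy in the constant only, inherited from the lemma's statement: any lower bound of order $N^{-1}$ exceeding twice the tube radius $\sqrt{2}(15N)^{-1}$ is all that is used later, so the substance of your proof stands, but you should either prove the bound $\sqrt{2}(5N)^{-1}$ and weaken the stated constant accordingly, or do the genuine case-by-case computation rather than appeal to splaying.
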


\begin{proof}
For the first claim, if $\sigma_{i,j}\cap\sigma_{i',j'}=\emptyset$, then by Lemma \ref{lem:sigma}, (and given that $\sigma_{i,j},\sigma_{i',j'}$ are the projections of $\tau_{i,j},\tau_{i',j'}$ onto $\R^2\times\{0\}$, respectively) we have that 
\[ \dist(\tau_{i,j},\tau_{i',j'}) \geq \dist(\sigma_{i,j},\sigma_{i',j'}) \geq {(5\sqrt{2}N)^{-1}}.\]
If $\sigma_{i,j}\cap\sigma_{i',j'}\neq\emptyset$, then by Lemma \ref{lem:sigma} there are two possible cases. In either of these two cases, elementary calculations show that 
\[ \dist(\tau_{i,j},\tau_{i',j'}) \geq {(5\sqrt{2}N)^{-1}}.\]

For the second claim, we have that for all $x\in \tau_{i,j}$
\begin{align*} 
\dist(x,\gamma_i) \leq |x_{i,j}-x| &\leq \tfrac12\diam{\sigma_{i,j}} + 8C_g\sqrt{2}(5N)^{-1} +2\sqrt{2}(5N)^{-1}\\
&= (8{\color{red}C_g}+6)\sqrt{2}(5N)^{-1}.
\end{align*}

For the third claim, it is easy to see that if $\sigma_{i,j}\cap\sigma_{i',j'}\neq \emptyset$, then {$j$ and $j'$ have different parity},  $\sigma_{i,j}$ meets $\sigma_{i',j'}$ {traversally}, and {the the offset in the third coordinate of $\psi_{i,j}(\gamma_k)$ compared to $\psi_{i',j'}(\gamma_k)$ is a quarter of the length of the shortest side of $\psi_{i,j}(\gamma_k)$}. Therefore, $\psi_{i,j}(\gamma_k)$ is linked with $\psi_{i',j'}(\gamma_k)$. Assume now that $\sigma_{i,j}\cap\sigma_{i',j'}= \emptyset$. Then, $\tau_{i,j} \subset \sigma_{i,j}\times \R$, $\tau_{i',j'} \subset \sigma_{i',j'}\times \R$ with the two infinite strips having positive distance. Therefore, $\tau_{i,j}$ is not linked with $\tau_{i',j'}$.
\end{proof}

\subsection{A Cantor set}\label{sec:Cantor}
{Given $x\in\R^3$ and $r>0$, define $C(x,r) = x+ [-1,1]^3$. That is, $C(x,r)$ is the cube centered at $x$, of side-length $2r$, and with edges parallel to the axes.
Define the solid $g$-torus
\[ T^g = \bigcup_{p\in \gamma} C(p,(48\sqrt{3})^{-1}).
\]
Note that $\partial T^g$ is a compact PL 2-manifold.}

By the choice of $N$ and Lemma \ref{lem:ladders}(2), we have $ \psi_{i,j}\left(T^g\right) \subset T^g$ for all $i\in \{1,\dots,g\}$ and all $j \in \{1,\dots, 2(c_i+1)N\}$. By Lemma \ref{lem:ladders}(1), we have $\psi_{i,j}\left(T^g\right) \cap  \psi_{i,j}\left(T^g\right) = \emptyset$ for all $i,i' \in \{1,\dots,g\}$, all $j \in \{1,\dots, 2(c_i+1)N\}$, and $j'\in\{1,\dots,2(c_{i'}+1)N\}$ with $(i,j)\neq (i',j')$.

Let 
\begin{equation}\label{eq:phim} 
\{\phi_{1},\dots,\phi_m\} = \{\psi_{i,j} : i=1,\dots,g,\quad j=1,\dots,2(c_i+1)N\}
\end{equation}
with $m=2(C_g+g)N$. Define now the Cantor set
\begin{equation}\label{eq:Cantor} 
X_g = \bigcap_{n=1}^{\infty} \bigcup_{j_1,\dots, j_n \in \{1,\dots,m\}} \phi_{j_1}\circ\cdots\circ\phi_{j_n}(T^g).
\end{equation}

\section{Proving the genus is $g$}\label{sec:genus}

The goal of this section is to prove the following proposition.

\begin{proposition}\label{prop:genus}
The genus of $X_g$ is equal to $g$. Moreover, for each $x\in X_g$, the local genus $g_x(X_g)$ is $g$.
\end{proposition}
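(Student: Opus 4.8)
The plan is to establish the two bounds $g(X_g) \le g$ and $g_x(X_g) \ge g$ for every $x \in X_g$ separately; together with the trivial inequality $g_x(X_g) \le g(X_g)$ (a local genus never exceeds the global genus, since the components containing $x$ form a subcollection of all components) this pins down both the genus and every local genus at $g$. The upper bound is the easy direction: the natural defining sequence
\[
M_n \;=\; \bigcup_{j_1,\dots,j_n} \phi_{j_1}\circ\cdots\circ\phi_{j_n}(T^g)
\]
consists of disjoint copies of the solid $g$-torus $T^g$ (disjointness and nesting were already checked via Lemma~\ref{lem:ladders}), and $T^g$ is a handlebody of genus exactly $g$. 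After a polyhedral approximation of the corners of $\partial T^g$ (harmless: $T^g$ is homeomorphic, by an ambient homeomorphism supported near the corners, to a polyhedral cube with $g-1$... actually $g$ handles — one counts $g$ loops from the $g$ rectangles $\gamma_i$ glued in a chain), we get $g(X_g;(M_n)) = g$, hence $g(X_g) \le g$.

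The substance is the lower bound $g_x(X_g) \ge g$ for every $x$. Here I would invoke the standard machinery from \v{Z}eljko's work \cite{Z1}: local genus is a topological invariant of the embedding, and it is detected by linking. The key structural feature to exploit is Lemma~\ref{lem:ladders}(3): inside each copy $\phi_j(T^g)$, the $2(c_i+1)N$ sub-copies $\tau_{i,j'}$ associated to the ladder $\gamma_i$ are \emph{cyclically linked} (consecutive ones link, in the pattern of an Antoine-type necklace wrapped around the core curve $\gamma_i$), and moreover the linking respects the individual loops $\gamma_k$. Fix $x \in X_g$ and any defining sequence $(L_n)$; let $L_n^x$ be the component containing $x$. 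The goal is to show $g(L_n^x) \ge g$ for all large $n$. I would argue that for $n$ large, $L_n^x$ is contained in one of our building blocks $\phi_{j_1}\circ\cdots\circ\phi_{j_k}(T^g)$ and contains the next-level piece $\phi_{j_1}\circ\cdots\circ\phi_{j_{k+1}}(T^g)$ for suitable indices; then inside $L_n^x$ one finds, for each $i=1,\dots,g$, a full cyclically-linked necklace of sub-tori each of which carries points of $X_g$ lying in distinct components of $L_{n+1}^x \cap$(that necklace). A genus-$(g-1)$-or-lower handlebody cannot "absorb" $g$ independent such necklaces simultaneously — this is exactly the linking-number / Alexander-duality obstruction that \v{Z}eljko uses to compute genus of Antoine-type constructions, now applied $g$ times in parallel because the chain $\gamma = \gamma_1 \cup \cdots \cup \gamma_g$ contributes $g$ independent generators to $H_1$ of the complement.

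The main obstacle, and where I would spend the most care, is making the "cannot absorb $g$ independent necklaces" step rigorous and uniform over all defining sequences and all points $x$. The clean way is to reduce to a key lemma of the form: if a handlebody $H$ contains a disjoint union of solid tori forming a chain that is "geometrically essential" in the sense that each link of each of the $g$ necklaces is non-contractible in $H$ and the $g$ necklaces are homologically independent in $H$, then $g(H) \ge g$. One proves this by cutting $H$ along $g$ meridian disks and tracking the rank of $H_1$; the linking hypotheses from Lemma~\ref{lem:ladders}(3) guarantee the necessary non-triviality. The delicate point is ensuring that an \emph{arbitrary} defining sequence — whose handlebodies may be wildly positioned relative to ours — still, at sufficiently deep levels, exhibits this essential-chain configuration; this requires the standard "any two defining sequences are intertwined" argument (each $L_n$ eventually engulfs a deep $M_k$ and is engulfed by a shallower $M_{k'}$, up to passing to components), together with the observation that engulfing a deep-level block forces engulfing all the linked necklace structure inside it. I expect this intertwining-plus-essentiality bookkeeping, rather than any single computation, to be the heart of the argument.
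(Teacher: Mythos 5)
Your upper bound and your general strategy (a rank/linking obstruction against genus $\le g-1$ handlebodies from an arbitrary competing defining sequence) are in the spirit of the paper, but the lower-bound sketch has a genuine gap: your ``key lemma'' takes as \emph{hypotheses} exactly the facts that are hardest to establish, namely that each necklace loop is non-contractible in the competing handlebody $H=L_n^x$ and that the $g$ necklaces are homologically independent \emph{in $H$}. Lemma \ref{lem:ladders}(3) only tells you that sibling sub-tori link each other; it does not make their loops essential in $H$, because once $H$ contains a whole block $\phi_w(T^g)$ all of that block's linked children lie inside $H$ as well, and at that fixed level nothing prevents $H$ from being, say, a ball containing the block. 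Essentiality can only be certified by linking partners lying \emph{outside} $H$, i.e.\ at scales comparable to or larger than $\operatorname{diam} H$, and to reach those you must propagate the containment upward through the levels of the construction. That propagation is precisely where the genus restriction on $H$ is used: if one bead of a completely linked chain lies in a genus $\le g-1$ handlebody $U$ whose boundary misses the chain, then \emph{all} beads do (the paper's Lemma \ref{lem:complink}, proved by exhibiting a rank-$g$ free subgroup of the rank-$(g-1)$ group $\pi_1(U)$), and filling-disk paths (Lemma \ref{lem:fillingdisk}) then let one assemble a core curve of the \emph{parent} block inside $U$; iterating this through all $p$ levels forces a core curve of $T^g$ itself into $U$, and the contradiction is a \emph{diameter} contradiction, $\operatorname{diam} U \ge \operatorname{diam} X_g$ versus $\operatorname{diam} U < \alpha^2\operatorname{diam} X_g$.

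Concretely, your phrase ``engulfing a deep-level block forces engulfing all the linked necklace structure inside it'' is either trivial (the children are inside the block) or, if you mean the linked \emph{siblings} outside the block, is exactly the statement that requires the genus bound on the engulfing handlebody and is not justified in your sketch; and your intended contradiction (``a genus $\le g-1$ handlebody cannot absorb $g$ independent necklaces'') is pitched at a single fixed level, where the independence hypotheses simply are not available. To repair the argument you need the upward induction through the defining sequence and the final diameter contradiction, which is the actual heart of the paper's proof; the intertwining of the two defining sequences that you mention is only the first step (choosing $p$ with $\partial U\cap\partial M_p=\emptyset$), not a substitute for that induction.
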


We start by establishing some terminology.

\begin{definition}

Let $g\geq 1$. We say that a solid genus $g$ torus $T$ embedded in $\R^3$ is {\it unknotted} if there is a homeomorphism $h:\R^3 \to \R^3$ such that $h(T) = T_0$, where $T_0$ is a solid torus with core curve
\begin{equation}
\label{eq:unknotted} 
\Sigma^{T_0} = \left ( [0,g] \times \{ 0,1\} \times  \{ 0 \} \right ) \cup \left ( \{0,1,\ldots, g \} \times [0,1] \times\{ 0 \} \right ).
\end{equation}
For $m\in \{1,\ldots, g\}$, let $\gamma^{T_0}_m$ be the loop in the core curve of $T_0$ given by
\[\gamma^{T_0}_m =  \left ( [m-1,m] \times \{ 0,1 \} \times  \{ 0 \}\right ) \cup \left ( \{m-1,m \} \times [0,1] \times \{ 0 \} \right ) ,\]
and set
\[ \gamma^T_m = h^{-1} ( \gamma^{T_0}_m ).\]
Then $\Sigma^T = \bigcup_{m=1}^g \gamma^T_m$ is a core curve for $T$.
\end{definition}

See Figure \ref{fig1} for an example of the core curve of an unknotted solid torus. 

\begin{definition}
Suppose that $T$ and $S$ are disjoint unknotted solid genus $g$ tori in $\R^3$. We say that $T$ and $S$ are {\it completely linked} if there is a homeomorphism $h:\R^3 \to \R^3$ such that $h(T) = T_0$ where $T_0$ has core curve given by \eqref{eq:unknotted} and $S_0$ has core curve given by
\begin{align*} 
\Sigma^{S_0} &= \left ( [1/2 , g + 1/2 ] \times  \{ 0 \} \times  \{ -1/2, 1/2 \} \right ) \\
 & \quad \cup \left ( \{1/2 , 3/2, \ldots, g+1/2 \} \times  \{ 0 \} \times  [-1/2 , 1/2 ] \right ).
\end{align*}
\end{definition}

We may enumerate the loops in the core curve for $S_0$ by $\gamma^{S_0}_m$, for $m=1,\ldots, g$ in the obvious way and may then consider the loops $\gamma^S_m = h^{-1} ( \gamma^{S_0}_m )$. Observe that $\gamma^T_1$ forms a Hopf link with $\gamma^S_1$ and is unlinked with $\gamma^S_m$ for $m=2,\ldots, g$. For $m \in \{2,\ldots, g\}$, $\gamma^T_m$ forms a Hopf link with both $\gamma^S_{m-1}$ and $\gamma^S_m$, and is unlinked with the other loops in the core curve for $S$.

See Figure \ref{fig4} for an example of completely linked solid genus $g$ tori.

\begin{lemma}
\label{lem:complink}
Let $g\geq 1$ and let $T$ and $S$ be disjoint completely linked solid genus $g$ tori with core curves $\Sigma^T,\Sigma^S$ respectively. If $U$ is a solid genus $g-1$ torus with $T \subset U$ and $\Sigma^S \cap \partial U = \emptyset$, then $\Sigma^S\subset U$.
\end{lemma}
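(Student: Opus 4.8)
The plan is to argue by contradiction: suppose $S \not\subset U$. Since $\partial S \cap \partial U = \emptyset$ and $U$ is connected, $S$ must lie entirely in the complement of $U$ (if $S$ met both $U$ and its complement without meeting $\partial U$, connectedness of $S$ would be violated). So the assumption becomes $S \subset \R^3 \setminus U$, and I must derive a contradiction with the hypothesis that $T \subset U$ and that $T, S$ are completely linked. The idea is that completely linked tori are linked in a way that is too strong to be separated by the boundary of a genus $g-1$ solid torus. First I would fix the model: by applying the ambient homeomorphism $h$ straightening $T$ into $T_0$, I may assume $T = T_0$ has the standard planar core with loops $\gamma_1^T, \dots, \gamma_g^T$, and then $U$ and $S$ are the images of the original objects under $h$; the hypotheses are preserved.

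The key step is a linking-number / homology obstruction. Since $T \subset U$, each core loop $\gamma_m^T$ lies in $U$, hence represents a class in $H_1(U;\Z)$, which is free abelian of rank $g-1$. Because $S$ is disjoint from $U$, any loop in $S$ — in particular each $\gamma_k^S$ — has a well-defined linking number with every loop contained in $U$, and this linking number depends only on the homology class in $H_1(\R^3 \setminus S;\Z)$ of the loop, equivalently factors through $H_1(U;\Z) \to H_1(\R^3 \setminus S;\Z)$. Now I use complete linkage: $\mathrm{lk}(\gamma_m^T, \gamma_m^S) = \pm 1$ for each $m \in \{1,\dots,g\}$, while $\mathrm{lk}(\gamma_m^T, \gamma_k^S) = 0$ for $k \neq m$. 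Consider the pairing $H_1(U;\Z) \times \{[\gamma_1^S],\dots,[\gamma_g^S]\} \to \Z$ given by linking number. The classes $[\gamma_1^T], \dots, [\gamma_g^T] \in H_1(U;\Z)$ are mapped by this pairing to the $g$ standard basis vectors of $\Z^g$ (the matrix of linking numbers is the identity, by complete linkage), so these $g$ classes are linearly independent in $H_1(U;\Z)$. But $H_1(U;\Z) \cong \Z^{g-1}$ has rank only $g-1$, so it cannot contain $g$ linearly independent elements — contradiction. Hence $S \subset U$.

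I should be careful about one point: that the loops $\gamma_m^T$ genuinely "use up" all of $H_1$ in the model, i.e. that for the standard genus $g$ solid torus $T_0$ the classes $[\gamma_1^{T_0}],\dots,[\gamma_g^{T_0}]$ form a basis of $H_1(T_0;\Z) \cong \Z^g$ — this is clear from the explicit core curve, where the loops $\gamma_m^{T_0}$ are exactly the standard generators. Then their images in $H_1(U;\Z)$ under inclusion $T \hookrightarrow U$ are what the linking pairing detects. The main obstacle, and the step requiring the most care, is justifying that linking number with the loops $\gamma_k^S$ is a homomorphism on $H_1(U;\Z)$ that detects these $g$ classes as independent — concretely, that $\mathrm{lk}(\cdot, \gamma_k^S)$ descends to $H_1(\R^3 \setminus S; \Z)$ and that the composite $H_1(T) \to H_1(U) \to \bigoplus_k H_1(\R^3\setminus \gamma_k^S)$ has rank $g$ because of the identity linking matrix. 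Everything else (the connectedness argument splitting off $S \subset \R^3\setminus U$, and the rank count $g > g-1$) is routine.
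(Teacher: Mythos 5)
Your proof is correct and follows essentially the same route as the paper: assume $S\subset\R^3\setminus U$, use the complete linking of the core curves $\gamma^T_m$ with the curves $\gamma^S_m$ lying outside $U$ to produce $g$ independent classes inside $U$, and contradict the fact that $U$ has rank $g-1$. The only difference is that you run the count in $H_1(U;\Z)\cong\Z^{g-1}$ using the (identity) linking matrix, whereas the paper argues inductively with subgroups of the free group $\pi_1(U)$; your abelianized version is in fact the cleaner formulation, since linking numbers only depend on homology classes and the rank bound for subgroups is automatic in $\Z^{g-1}$, while a free group of rank $g-1\geq 2$ does contain free subgroups of larger rank, so the paper's rank count is best read as exactly the homological statement you wrote down.
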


\begin{proof}
Suppose for a contradiction that $\Sigma^S\subset \R^3 \setminus U$. As $\pi_1(U)$ is the free group on $g-1$ generators, this group has rank $g-1$.

Let $\Gamma_1$ be the subgroup of $\pi_1(U)$ generated by $[\gamma^T_1]$, the equivalence class of $\gamma^T_1$. As $\gamma^T_1 \subset T$ is linked with $\gamma^S_1 \subset S^3 \setminus  U$, it follows that $[\gamma^T_1]$ is non-trivial in $\pi_1(U)$. Hence $\Gamma_1$ has rank $1$.

For $k\in \{1,\ldots, g-1\}$, suppose that the subgroup $\Gamma_k$ of $\pi_1(U)$ that is generated by $[\gamma^T_1],\ldots, [\gamma^T_k]$ has rank $k$. 
Every element of $\Gamma_k$ can be represented by a loop that does not link with $\gamma^S_{k+1}$. 
As $\gamma^T_{k+1}$ is linked with $\gamma^S_{k+1} \subset \R^3 \setminus U$, it follows that $[\gamma^T_{k+1}]$ is both non-trivial in $\pi_1(U)$ and not an element of $\Gamma_k$. It follows that $\Gamma_{k+1}$ has rank $k+1$.

By induction, we conclude that $\Gamma_g$ is a rank $g$ subgroup of $\pi_1(U)$, which contradicts the fact that $\pi_1(U)$ has rank $g-1$.
\end{proof}

Next, if $\gamma$ is an unknotted circle in $\R^3$, then it bounds a topological disk. Any such disk is called a {\it filling disk}. In the special case where $\gamma$ is a planar topological circle, there is a unique filling disk which lies in the same plane as $\gamma$. This is called the {\it canonical filling disk} for $\gamma$.

\begin{definition}
If $T$ is a solid unknotted genus $g$ torus with core curve $\Sigma^T$, we define a {\it nice} collection of filling disks for $\Sigma^T$ to have the property that for each $m$ the filling disk for $\gamma^T_m$ arises as the image of a canonical filling disk for $\gamma^{T_0}_m$ under $h^{-1}$.
\end{definition}

The important point is that a nice collection of filling disks for $\Sigma^T$ consists of a collection of pairwise disjoint filling disks and so that if $\sigma_1,\sigma_2$ are any two closed loops contained in different filling disks, then $\sigma_1,\sigma_2$ are unlinked.

\begin{lemma}
\label{lem:fillingdisk}
Let $g\geq 1$ and let $T$ and $S$ be disjoint completely linked genus $g$ tori with core curves $\Sigma^T,\Sigma^S$ respectively. Let $D^S_m$, for $m\in \{1,\ldots, g\}$, be a nice collection of filling disks for $\Sigma^S$. Let $U\subset \R^3$ be a solid genus $g-1$ torus.
If $\Sigma^T\cup \Sigma^S \subset U$, then there exists a path in $U$ joining $\Sigma^T$ and $\Sigma^S$ contained in one of the filling disks $D^S_m$ for some $m\in \{ 1,\ldots, g\}$.
\end{lemma}

\begin{proof}
Suppose that $\Sigma^T\cup \Sigma^S \subset U$. For a contradiction, suppose that for each $m\in \{1,\ldots, g\}$, there is a loop $\sigma_m \subset D^S_m \cap (\R^3 \setminus U)$ which separates $\gamma^S_m$ from $D^S_m \cap \gamma^T_m$ in $D^S_m$. As $D^S_m$ is a nice collection of filling disks, these loops $\sigma_m$ are pairwise unlinked.

As in the proof of Lemma \ref{lem:complink}, we consider the subgroups $\Gamma_m$ of $\pi_1(U)$ generated by $[\gamma^T_1],\ldots, [\gamma^T_m]$. As $\gamma^T_1$ is linked with $\sigma_1 \subset \R^3 \setminus U$, it follows that $\Gamma_1$ has rank $1$. The same inductive argument as above, with $\sigma_m$ in place of $\gamma^S_m$, shows that $\Gamma_m$ has rank $m$ for $m\in \{1,\ldots, g \}$. This again contradicts the fact that $\pi_1(U)$ has rank $g-1$ and proves (i).

\end{proof}
Recall the construction of the Cantor set $X_g$ from \eqref{eq:Cantor}. Evidently $\operatorname{diam} X_g >0$. The construction of $X_g$ yields a defining sequence given by, for $n\geq 1$, 
\[ M_n = \bigcup_{ j_1,\ldots, j_n \in \{ 1,\ldots,m\} } \phi_{j_1} \circ \cdots \circ \phi_{j_n} (T^g) .\]

We fix some notation.
Given an integer $k\geq 0$, we denote by $\{1,\ldots, m\}^k$ the set of words formed from the alphabet $\{1,\ldots, m\}$ that have length exactly $k$. Conventionally, we set $\{1,\ldots, m\}^0 = \{\varepsilon\}$ where $\varepsilon$ is the empty word. We also denote by $\{1,\ldots, m\}^* = \bigcup_{k\geq 0}\{1,\ldots, m\}^k$ the set of all finite words formed from $\{1,\ldots, m\}$. For $k\geq 1$, denote by $\kappa : \{1,\ldots, m\}^k \to \{1,\ldots, m \}^{k-1}$ the forgetful map defined by
\[ \kappa(j_1 \cdots j_k) = j_1\ldots j_{k-1} \]
with $\kappa (j) = \varepsilon$ for $j\in \{1,\ldots, m\}$.
We have the alternative description of the defining sequence
\[ M_n = \bigcup_{w \in \{1,\ldots, m \}^n } \phi_w(T^g).\]
Here, if $w = j_1\ldots j_n$ then $\phi_w = \phi_{j_1} \circ \cdots \circ \phi_{j_n}$.

We are now in a position to prove that the genus of $X_g$ is $g$.

\begin{proof}[{Proof of Proposition \ref{prop:genus}}]
The construction of $X_g$ shows that its genus is at most $g$.

Suppose for a contradiction that the genus of $X_g$ is strictly smaller than $g$. Then we can find an alternative defining sequence $\widetilde{M}_n$ which contains solid genus $g-1$ tori of arbitrarily small diameter. Choose a solid genus $g-1$ torus $U$ that is a component of $\widetilde{M}$ of diameter at most $\alpha^{2}\operatorname{diam}(X^g) $, where $\alpha = 8\sqrt{2}(5N)^{-1}$ is the scaling factor of each $\phi_j$. 

As $U$ is a solid torus with $\partial U \subset \R^3 \setminus X_g$, we have $\dist ( \partial U, X_g) = \delta>0$. As 
\[ \sup_{x\in M_n} \dist (x,X_g) \to 0 \]
as $n\to \infty$, it follows that we may choose $p\in \N$ so that $\partial U \cap M_p  = \emptyset$. It follows that every component of $M_p$ is either contained in $U$ or contained in $\R^3 \setminus U$, and at least one is contained in $U$. 

{\bf Claim:} Suppose that $w\in \{1,\ldots, m\} ^ j$ and a core curve $\Sigma^w$ of $\phi_w(T^g)$ is contained in $U$ with a collection of nice filling disks $D^w_k$, for $k=\{1,\ldots, m\}$, contained in $\phi_{\kappa(w)}(T^g)$. Then there is a core curve $\Sigma^{\kappa(w)}$ of $\phi_{\kappa(w)}(T^g)$ that is contained in core curves of $\phi_{\kappa(w) k}(T^g)$, for $k\in \{1,\ldots, m\}$ and the collection of nice filling disks for each of these core curves.

To prove the claim, the collection of tori given by $\phi_{\kappa(w)k}(T^g)$, for $k\in \{1,\ldots, m\}$, form a chain of linked tori contained in the larger torus $\phi_{\kappa(w)} (T^g)$. Suppose that $\Sigma^{\kappa(w)k}$ is a core curve for each torus in the chain. As $\Sigma^w$ is contained in $U$, a repeated application of Lemma \ref{lem:complink} shows that all of the core curves $\Sigma^{\kappa(w)k}$ are contained in $U$.

Now, each core curve has a collection of nice filling disks that are certainly contained in $\phi_{\kappa(w)}(T^g)$. A repeated application of Lemma \ref{lem:fillingdisk} shows that we may construct a core curve of $\phi_{\kappa(w)}(T^g)$ with the claimed properties.

Returning the proof of the proposition, we have $\phi_w(T^g) \subset U$ for some $w\in\{1,\ldots, m \}^p$. Therefore a core curve $\Sigma^w$ of $\phi_w(T^g)$ satisfies the hypotheses of the claim, which yields a core curve $\Sigma^{\kappa(w)}$ for $\phi_{\kappa(w)}(T^g)$. We inductively find core curves $\Sigma^{\kappa^j(w)}$ for $\phi_{\kappa^j(w)}(T^g)$ contained in $U$ for $j=1,\ldots, p$.

In particular, choosing $j=p$ we obtain a core curve for $T^g$ that is contained in $U$.
This forces $\operatorname{diam} U \geq \diam (X^g)$ which contradicts $\operatorname{diam} (U) < \alpha^2 \operatorname{diam} (X^g)$.

The argument above shows that we cannot insert a genus $g-1$ handlebody into a defining sequence for $X_g$ in any non-trivial way. From this we conclude that the local genus of $X_g$ is $g$ at every $x\in X_g$.
\end{proof}

\section{A Julia set of genus $g$}\label{sec:julia}

The goal of this section is to construct a UQR map of $\overline{\R^3}$ that has $X_g$ as its Julia set. This along with Proposition \ref{prop:genus}, completes the proof of Theorem \ref{thm:1}.

\subsection{A basic covering map}

For each $n\in\N$ we denote by $\Sigma^n$ the associated planar curve $\g$ from Section \ref{sec:ladder} for the genus $n$, and by $\Gamma_{n,i}$ the simple closed curves $\g_i$ defined in \eqref{eq:gamma}. Fix for the rest of this section an integer $g\in\N$ and let 
$N_g$ be the integer defined in \eqref{eq:N}.

For each $n\in \{1,\dots,g\}$, each $i\in\{1,\dots,n\}$ and each $j\in\{1,\dots,(2c_{n,i}+2)N_g\}$ denote by $\alpha_{n,i,j}$ the planar segments $\sigma_{i,j}$ of length $8\sqrt{2}(5N_g)^{-1}$ intersecting $\Gamma_{n,i}$ as defined in Section \ref{sec:second}. Define also $\Sigma^g_{n,i,j}$ to be the curves $\tau_{i,j}$ defined just before Lemma \ref{lem:ladders} which are copies of $\Sigma^g$ scaled down by a factor of $8\sqrt{2}(5N_g)^{-1}$ and their projection on $\R^2\times\{0\}$ are the segments $\alpha_{n,i,j}$.

Define the metric $\delta$ on $\R^3$ given by
\begin{equation}\label{eq:delta} 
\delta((x_1,y_1,z_1),(x_2,y_2,z_2)) = \sqrt{\max\{|x_1-x_2|^2, |y_1-y_2|^2\}) + |z_1-z_2|^2}.
\end{equation}
Define for each $n\in\{1,\dots,g\}$, 
\[ T^n = \{p\in \R^3 : \delta(p,\Sigma^n) \leq \tfrac1{24}\}\]
and for each $i\in\{1,\dots,n\}$ and each $j\in\{1,\dots,(2c_{n,i}+2)N_g\}$,
\[ T^g_{n,i,j} = \{p\in \R^3 : \delta(p,\Sigma^g_{n,i,j}) \leq \sqrt{2}(15N_g)^{-1} \}.\]

The goal of this section is to construct the following map.

\begin{proposition}\label{prop:1}
There exists a degree $2^{\lceil \log_2{g} \rceil + 2} N_g$ BLD branched covering
\[ F: T^g \setminus \bigcup_{i,j} \int(T^g_{g,i,j}) \to \overline{B}(0,{4}C_{g,g}) \setminus \int(T^g) \]
such that for each $i,j$, {$F|\partial T^g_{g,i,j}$ is a similarity and} $F(\partial T^g_{g,i,j}) = \partial T^g$
\end{proposition}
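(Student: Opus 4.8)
\emph{Proof proposal.} Put $W=T^g\setminus\bigcup_{i,j}\int(T^g_{g,i,j})$ and $V=\overline{B}(0,2C_{g,g})\setminus\int(T^g)$. The plan is to write $F$ as a finite composition of three elementary BLD branched covers — \emph{similarities}, \emph{winding maps} $(r,\theta,z)\mapsto(r,k\theta,z)$ in suitable cylindrical coordinates, and \emph{folding maps} (accordion folds of a ladder onto a sub-ladder) — after transporting $W$ and $V$ to standard polyhedral models. Such a transport is quantitatively free: every relevant set is polyhedral, each $\psi_{i,j}$ is a similarity, and the choice \eqref{eq:N} of $N_g$ together with the separation estimates of Lemmas \ref{lem:sigma} and \ref{lem:ladders} keep all bi-Lipschitz constants, and hence the eventual BLD constant of $F$, under control. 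This is in the spirit of the genus $1$ and genus $2$ Julia set constructions of \cite{FW,FS}, the new ingredient being the folding step needed to reconcile the genus $g$ ladder with its symmetric power-of-two refinement.

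It is worth recording first what the boundary behaviour must be, since this pins down much of $F$. We will arrange $F^{-1}(\partial T^g)=\bigcup_{i,j}\partial T^g_{g,i,j}$ and $F(\partial T^g)=\partial B(0,2C_{g,g})$, so that every preimage of a generic point of $\partial T^g$ lies on one of the $m=2(C_{g,g}+g)N_g$ removed boundary surfaces. Now $c_{g,i}=2a_{g,i}-1$, and a short induction on the defining recursion gives $\sum_{i=1}^{g} a_{g,i}=2^{\lceil\log_2 g\rceil}$, so that $C_{g,g}+g=2^{\lceil\log_2 g\rceil+1}$; hence the prescribed degree $2^{\lceil\log_2 g\rceil+2}N_g$ equals $m$. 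It follows that each $\partial T^g_{g,i,j}$ carries exactly one preimage and $F$ must restrict to a \emph{homeomorphism} $\partial T^g_{g,i,j}\to\partial T^g$. Thus near each removed handlebody $F$ is forced to be a collar reparametrisation of the inverse similarity $\psi_{i,j}^{-1}$ (conformal, hence BLD with constant $1$), and all the branching of $F$ is pushed into the bulk of $W$, between $\partial T^g$ and the removed pieces and out toward $\partial B(0,2C_{g,g})$: concretely, the branch set will be a tame $1$-complex — a finite union of arcs along the axes of the winding maps and the fold loci — and $F|_{\partial T^g}$ a degree-$m$ branched cover of the genus $g$ surface $\partial T^g$ onto the sphere $\partial B(0,2C_{g,g})$.

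To produce $F$ I would combine two model maps. The \emph{winding} model handles periodicity: along a single loop $\gamma_i$ of the core curve the $2(c_{g,i}+1)N_g=4a_{g,i}N_g$ scaled copies are arranged periodically (Lemma \ref{lem:ladders}), so, exactly as in the genus $1$ case, there is a degree-$4a_{g,i}N_g$ BLD branched cover from a solid-torus neighbourhood of $\gamma_i$ with those copies removed onto the ``slab'' of $V$ around the thickened $\gamma_i$, given in cylindrical coordinates by the winding map of degree $4a_{g,i}N_g$, with its high-multiplicity branching along a single meridional arc. The \emph{folding} model handles the genus: the $g$ slabs just described do not yet assemble into $V$ because $\gamma$ has rungs of unequal widths $c_{g,i}$ whereas the winding model is symmetric, and the sequences $a_{n,k}$ are precisely a recipe for folding the symmetric $2^{\lceil\log_2 g\rceil}$-rung ladder onto $\gamma$, covering rung $i$ exactly $a_{g,i}$ times (which is why a doubled rung must be widened to $c_{g,i}=3$); this fold thickens to a degree-$2^{\lceil\log_2 g\rceil}$ BLD branched cover intertwining the two pictures. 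Composing the folding and winding models and inserting the corrections $\psi_{i,j}^{-1}$ near the removed handlebodies gives $F$ of total degree $4\cdot 2^{\lceil\log_2 g\rceil}N_g=2^{\lceil\log_2 g\rceil+2}N_g$. (Equivalently, and perhaps more naturally, one may induct on $\lceil\log_2 g\rceil$: from a basic cover for a ladder with $n$ rungs a single ``doubling-and-folding'' step produces one for a ladder with up to $2n$ rungs while doubling the degree, and $a_{n,k}$ records which rungs double.)

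\emph{Main obstacle.} The delicate part is making the winding and folding models agree across the $g-1$ disks that cut $\gamma$ into its rungs — at the corners where $\gamma_i$ meets $\gamma_{i+1}$ the linking pattern of the removed handlebodies changes (Lemma \ref{lem:ladders}(3) records it exactly) and the fold pattern switches between the $a_{g,i}=1$ and $a_{g,i}=2$ regimes, so one must check that the winding maps on adjacent slabs can be chosen with matching restrictions on the common disk, that the local degrees of $F$ add up correctly there, that no branching spills onto the disk, and that the BLD constant stays bounded (which it does, being the constant of one fixed finite composition of similarities, winding maps and folds on regions of comparable scale, by \eqref{eq:N}). This compatibility bookkeeping — tractable only because every piece of the construction is explicit and rigid — is where the real work lies; once it is carried out, $F$ is the required BLD cover.
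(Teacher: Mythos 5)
Your outline assembles the right ingredients (winding maps, a folding scheme recorded by the $a_{n,k}$, explicit polyhedral models with bi-Lipschitz control), and your degree bookkeeping is correct: indeed $\sum_{i=1}^g a_{g,i}=2^{\lceil\log_2 g\rceil}$, hence $m=2(C_{g,g}+g)N_g=2^{\lceil\log_2 g\rceil+2}N_g$, and in the end each $\partial T^g_{g,i,j}$ is carried homeomorphically onto $\partial T^g$, consistent with the actual construction. But there is a genuine gap: the folding cover is never constructed, and the mechanism you sketch for it cannot work as stated. A map that covers rung $i$ of $\gamma$ exactly $a_{g,i}\in\{1,2\}$ times is not a branched cover of constant degree $2^{\lceil\log_2 g\rceil}$ (a branched cover between connected manifolds has constant degree), and your fold points from the symmetric power-of-two ladder onto $\gamma$, i.e.\ in the wrong direction to serve as a factor of $F$, which must carry the thickened $\gamma$ toward the ball. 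Your per-loop model is also not right as written: a winding map $(r,\theta,z)\mapsto (r,k\theta,z)$ takes a solid torus of revolution onto a solid torus and is unbranched there, and no branched cover from a solid-torus slab onto a ball-minus-handlebody slab can branch with high multiplicity along a single arc (Riemann--Hurwitz on the boundary surfaces rules it out); as in the genus one case you cite, one needs an extra degree-two quotient by an involution, whose branching has order two along two arcs. Finally, the matching across the $g-1$ cutting disks, which you correctly flag as the main obstacle, is exactly the hard content of the proposition, and "compatibility bookkeeping, tractable because everything is explicit" is not an argument.

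The paper resolves precisely these points by an inductive genus-halving. Lemma \ref{lem:folding} produces, for each $n$, a degree-2 BLD cover of $T^{\lceil n/2\rceil}$ minus its handlebodies by $T^n$ minus its handlebodies; for odd $n$ this rests on an explicit decomposition into pieces $U_1,\dots,U_6$ and $V_1,V_2,V_3$, involutions $\iota_2,\iota_3$, and one nontrivial bi-Lipschitz gluing map $h:U_2\to V_2$ whose construction occupies Appendix \ref{ap:ext} (a bi-Lipschitz isotopy of boundaries combined with V\"ais\"al\"a's extension theorem, Theorem \ref{thm:Vaisext}). Iterating $\lceil\log_2 g\rceil$ times reduces to genus one, where a bi-Lipschitz straightening, the degree-$2N_g$ winding map $\omega$ and the degree-2 involution quotient $q$ finish the construction, with PL triangulation giving the BLD property. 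Under this composite your $i$-th rung does wrap $a_{g,i}$ times around $T^1$, recovering your per-slab degree $4a_{g,i}N_g$; but the recursive structure is what localizes the matching problem to a single explicit map per halving step. Without supplying that construction (or a substitute for it), your proposal stops short of a proof.
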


The first step is given in the following lemma. 

\begin{lemma}\label{lem:folding}
For each $n\in\{2,\dots,g\}$ there exists a degree 2 BLD branched covering 
\[ F_n :  T^n \setminus \bigcup_{i,j}\int( T^g_{n,i,j} ) \to T^{\lceil \frac{n}2 \rceil} \setminus\bigcup_{i,j} \int( T^{g}_{\lceil \frac{n}2 \rceil,i,j} ) \]
such that for each $i,j$ there exist $i',j'$ with $F_n(\partial T^g_{n,i,j}) = \partial T^{g}_{\lceil \frac{n}2 \rceil,i',j'}$.
\end{lemma}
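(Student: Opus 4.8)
The plan is to build the degree-2 cover $F_n$ by exhibiting a ``folding'' of the ladder $\Sigma^n$ onto $\Sigma^{\lceil n/2\rceil}$ that doubles along the fold and then thicken it to the solid tori. The combinatorics of this folding is exactly what the sequences $a_{n,k}$ from Section~\ref{sec:seq} were designed to encode: the recursion for $a_{2m,k}$ and $a_{2m+1,k}$ describes how the $n$ loops $\Gamma_{n,1},\dots,\Gamma_{n,n}$ of the planar curve $\Sigma^n$ map down, two-to-one (with one loop possibly fixed when $n$ is odd, accounting for the ``$2a_{m+1,m+1}$'' term), onto the $\lceil n/2\rceil$ loops of $\Sigma^{\lceil n/2\rceil}$. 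So the first step is to write down an explicit piecewise-linear map $f_n$ on the planar curves $\Sigma^n \to \Sigma^{\lceil n/2\rceil}$ which is a degree-2 branched-type folding: it folds the ladder in half about its midpoint, matching loop $\Gamma_{n,i}$ with loop $\Gamma_{n,n+1-i}$ and sending the pair onto a single loop of the target, with the widths $c_{n,i}=c_{n,n+1-i}$ guaranteeing the two loops have the same shape so the fold is an honest identification.

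Second, I would promote $f_n$ to a map of the planar $\delta$-neighborhoods, i.e. of the ``thickened ladders'' intersected with a plane, and then to the solid tori $T^n \to T^{\lceil n/2\rceil}$ by taking products with the normal (the $z$-) direction; since $T^n$ and $T^{\lceil n/2\rceil}$ are both $\delta$-neighborhoods of radius $1/24$ of their cores, and the fold is a local isometry away from the crease, the resulting map is BLD with a controlled constant. The crease of the fold is a codimension-one set, and there one checks the map is still BLD (it behaves like $x\mapsto |x|$ in one coordinate, which is BLD); this is where one must be slightly careful, but it is standard for folding maps. Third, and this is the part that needs the most bookkeeping, I would verify that the similarity copies $T^g_{n,i,j}$ removed from the domain map correctly: the segments $\alpha_{n,i,j}=\sigma_{i,j}$ sit along $\Gamma_{n,i}$, the fold sends $\Gamma_{n,i}$ to a loop of $\Sigma^{\lceil n/2\rceil}$, and under the PL folding the point $x_{i,j}$ goes to some point $x_{i',j'}$ along that loop; one must check that the interiors $\int(T^g_{n,i,j})$ are exactly the preimages of the interiors $\int(T^{g}_{\lceil n/2\rceil,i',j'})$, so that removing them from both sides yields a cover on the complement, and that boundaries go to boundaries, giving $F_n(\partial T^g_{n,i,j}) = \partial T^{g}_{\lceil n/2\rceil,i',j'}$. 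Here the precise choice of the radius $\sqrt2(15N_g)^{-1}$ of the $T^g_{n,i,j}$ and the evenness of the counts $2(c_{n,i}+1)N_g$ (so the fold matches segment to segment, not segment to gap) enter; Lemma~\ref{lem:sigma} and the combinatorial identities $C_{n,i}+C_{n,n-i}$-type relations coming from the $a_{n,k}$ recursion are what make this matching work on the nose.

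The main obstacle I expect is precisely this last matching step: one needs the folding of $\Sigma^n$ onto $\Sigma^{\lceil n/2\rceil}$ to be compatible, down to the level of individual unit segments $\sigma_{i,j}$, with the already-fixed positions of the similarity images $T^g_{n,i,j}$, and to check that the fold does not create new self-intersections or fail to be a local homeomorphism away from the crease. Concretely, I would handle $n$ even and $n$ odd separately, using $\lceil n/2\rceil = m$ in the first case and $m+1$ in the second, and in each case read off from the defining recursion of $a_{n,k}$ that $c_{n,i}=c_{\lceil n/2\rceil, \phi(i)}$ for the appropriate index map $\phi$, so the loop $\Gamma_{n,i}$ and its target loop $\Gamma_{\lceil n/2\rceil,\phi(i)}$ are congruent rectangles; then the fold restricted to that loop is a reflection-and-identification and the segment indices transform by an explicit affine formula in $j$, from which the required $i',j'$ are obtained. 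Once $F_n$ is constructed for each $n\in\{2,\dots,g\}$, Proposition~\ref{prop:1} follows by composing the $F_n$'s down the binary tree $g \to \lceil g/2\rceil \to \cdots \to 1$ (about $\lceil \log_2 g\rceil$ steps, each of degree $2$) and finishing with a standard degree-$N_g$ ``wrapping'' map $T^1 \to \overline B(0,2C_{g,g})\setminus\int(T^g)$ analogous to the classical power-map construction for Antoine-type Julia sets; the degree multiplies to $2^{\lceil\log_2 g\rceil+2}N_g$ after accounting for the final two-fold and wrapping factors.
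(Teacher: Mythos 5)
Your even-$n$ combinatorics are essentially the paper's (the recursion for $a_{n,k}$ is indeed designed so that the left and right halves of $\Sigma^{2k}$, together with the attached sub-tori, are swapped by a symmetry), but there are two genuine gaps. First, the map you build is a reflection-type fold: you fold the planar ladder about its midline and then take the product with the $z$-direction, so the crease is a codimension-one set, and you assert that ``$x\mapsto|x|$ in one coordinate'' is BLD. It is not: BLD maps are discrete, open, sense-preserving branched covers (with branch set of topological dimension at most $n-2$), and a fold along a plane is neither open as a map into $\R^3$ nor sense-preserving, so it is not even quasiregular. The paper avoids exactly this by using the orientation-preserving $\pi$-rotation $\iota_1(x,y,z)=(2C_{2k,k}-x,y,-z)$ about a \emph{line} and taking the quotient $T^{2k}\to T^{2k}/\langle\iota_1\rangle$; the resulting degree-2 map is branched over a line (codimension two), and is then made PL, hence BLD, via compatible triangulations. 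Note also that the $z$-flip in $\iota_1$ is needed so that the collection $\bigcup_{i,j}T^g_{2k,i,j}$, whose members are offset in the $z$-direction according to the parity of $j$, is actually invariant; a purely planar fold does not see this.

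Second, the odd case cannot be a fold about the midpoint at all. For $n=2k+1$ the middle loop $\Gamma_{2k+1,k+1}$ has width $c_{2k+1,k+1}=3$ while its target $\Gamma_{k+1,k+1}$ has width $1$; folding $\Sigma^{2k+1}$ about its vertical midline sends the middle loop two-to-one onto an \emph{arc} of width $3/2$ (the fold line meets the top and bottom edges, not a vertical edge), so the image is not $\Sigma^{k+1}$, and no adjustment of the $j$-indexing fixes this. The middle handle must instead be wrapped with degree $2$ around the small target handle. This is where the real work of the paper's proof lies: it decomposes $T^{2k+1}\setminus\bigcup_{i,j}\int(T^g_{2k+1,i,j})$ into six pieces $U_1,\dots,U_6$ and the target into $V_1,V_2,V_3$, uses the identity on $U_1$, the $\pi$-rotation $\iota_2$ on $U_6$, and on the middle pieces a nontrivial bi-Lipschitz homeomorphism $h:U_2\to V_2$ (built in Appendix \ref{ap:ext} from a three-stage bi-Lipschitz deformation together with V\"ais\"al\"a's extension theorem), matched up by the rotations $\iota_2$ and $\iota_3$ so that the pieces glue to a degree-2 BLD cover respecting the boundaries $\partial T^g_{n,i,j}$. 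Your proposal contains neither the quotient-by-rotation mechanism needed for BLD nor any device for the width-3 middle handle, so as written the construction fails precisely at these two points.
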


\begin{proof}[{Proof of Lemma \ref{lem:folding}}]
The construction of $F_n$ is different for the cases that $n$ is even or odd.

Assume first that $n=2k$ for some $k\in\N$.  Let 
\[ \iota_1:\R^3 \to \R^3 \qquad\text{with}\quad \iota_1(x,y,z) = (2C_{2k,k}-x,y,-z)\]
be the $\pi$-{radians} rotation with respect to the line $\ell_1 = \{z=0\}\cap\{x=C_{2k,k}\}$. Given such a rotation, we may realize the quotient $\R^3 / \langle \iota_1 \rangle$ concretely by a degree $2$ winding map $q_{\iota_1}:\R^3 \to \R^3$ that fixes pointwise the axis fixed by $\iota_1$. In cylindrical coordinates aligned with the fixed axis, $q_{\iota_1}$ is given by
\[ q_{\iota_1}(r,\theta,x_3) = (r,2\theta,x_3)\]
and it is well-known that $q_{\iota_1}$ is quasiregular, see for example \cite[p.13]{Rickman}. We have $q_{\iota_1}(\iota_1(x)) = q_{\iota_1}(x)$ for all $x\in \R^3$.

By the construction of the sequence $(a_{n,i})_{i=1}^n$ and the construction of sets $(T^g_{2k,i,j})_{i,j}$ we have that sets $T^{2k}$ and $\bigcup_{i,j}T^g_{2k,i,j}$ are both invariant under $\iota_1$ since both sets are symmetric with respect to the line $\ell_1$. The winding map $q_{\iota_1}$ then satisfies:
\begin{enumerate}
\item for each $i,j$ there exists $j'$ such that $q_{\iota_1}(T^g_{2k,i,j}) = q_{\iota_1}(T^g_{2k,2k-i+1,j'})$;
\item for each $i,j$, the image $q_{\iota_1}(T^g_{2k,i,j})$ is the image of $T^g_{2k,i,j}$ under a bi-Lipschitz homeomorphism of $\R^3$.
\end{enumerate} 

To obtain a BLD branched covering, we consider a BLD version of $q_{\iota_1}$ that we call $q$. Give $T^{2k}$ a $C^1$-triangulation $\zeta: |U| \to T^{2k}$ in the sense of \cite[p. 81]{Munkres} by a simplicial complex $U$ in $\R^3$ that respects the involution $\iota_1|T^{2k}$ and of which $\zeta^{-1}(\bigcup_{i,j}(T^g_{2k,i,j}))$ is a subcomplex. Identify $q_{\iota_1}(T^{2k})$ with a simplicial complex $V$ via $\xi:|V| \to q(T^{2k})$ in $\R^3$ of which $q_{\iota_1}(\bigcup_{i,j}(T^g_{2k,i,j}))$ is a subcomplex. This induces a simplicial map $\widetilde{q} :|U|\to |V|$ which is thus PL. We replace $q_{\iota_1}$ by $q:= \xi \circ \widetilde{q}\circ \zeta^{-1}$. As $\zeta$ and $\xi$ are both $C^1$ on the faces of the compact simplicial complexes $U$ and $V$ respectively, as and $\widetilde{q}$ is PL, it follows that $q$ is BLD.

Suppose now that $n=2k+1$. Recall that $c_{2k+1,k+1}=3$, $c_{k+1,k+1}=1$. We decompose $T^{2k+1} \setminus \bigcup_{i,j} \int(T^{g}_{2k+1,i,j})$ into six pieces and we decompose $T^{k+1} \setminus\bigcup_{i,j}\int( T^{g}_{k+1,i,j}) $ into three pieces as follows; see Figure \ref{fig5}. Let
\begin{align*} 
U_1 &= (T^{2k+1} \setminus \bigcup_{i,j} \int(T^{g}_{2k+1,i,j}) ) \cap\{x \leq C_{2k+1,k} + \tfrac12\}\\
U_2 &= (T^{2k+1} \setminus \bigcup_{i,j} \int(T^{g}_{2k+1,i,j}) ) \cap\{C_{2k+1,k} + {\tfrac12} \leq x \leq C_{2k+1,k} + \tfrac32\}\cap \{y\geq \tfrac12\}\\
U_3 &= (T^{2k+1} \setminus \bigcup_{i,j} \int(T^{g}_{2k+1,i,j}) ) \cap\{C_{2k+1,k} + \tfrac32 \leq x \leq C_{2k+1,k} + {\tfrac52}\}\cap \{y\geq \tfrac12\}\\
U_4 &= (T^{2k+1} \setminus \bigcup_{i,j} \int(T^{g}_{2k+1,i,j}) ) \cap\{C_{2k+1,k} + {\tfrac12} \leq x \leq C_{2k+1,k} + \tfrac32\}\cap \{y\leq \tfrac12\}\\
U_5 &= (T^{2k+1} \setminus \bigcup_{i,j} \int(T^{g}_{2k+1,i,j}) ) \cap\{C_{2k+1,k} + \tfrac32 \leq x \leq C_{2k+1,k} + {\tfrac52}\}\cap \{y\leq \tfrac12\}\\
U_6 &= (T^{2k+1} \setminus \bigcup_{i,j} \int(T^{g}_{2k+1,i,j}) ) \cap\{x \geq C_{2k+1,k} + {\tfrac52}\}.
\end{align*}
Let also
\begin{align*} 
V_1 &= ( T^{k+1} \setminus \bigcup_{i,j} T^{g}_{k+1,i,j} )  \cap\{x \leq C_{k+1,k} + \tfrac12\}\\
V_2 &= ( T^{k+1} \setminus \bigcup_{i,j} T^{g}_{k+1,i,j} )  \cap\{x \geq C_{k+1,k} + \tfrac12\}\cap\{y\geq \tfrac12\}\\
V_3 &= ( T^{k+1} \setminus \bigcup_{i,j} T^{g}_{k+1,i,j} )  \cap\{x \geq C_{k+1,k} + \tfrac12\}\cap\{y\leq \tfrac12\}.
\end{align*}

\begin{figure}
\includegraphics[width=0.85\textwidth]{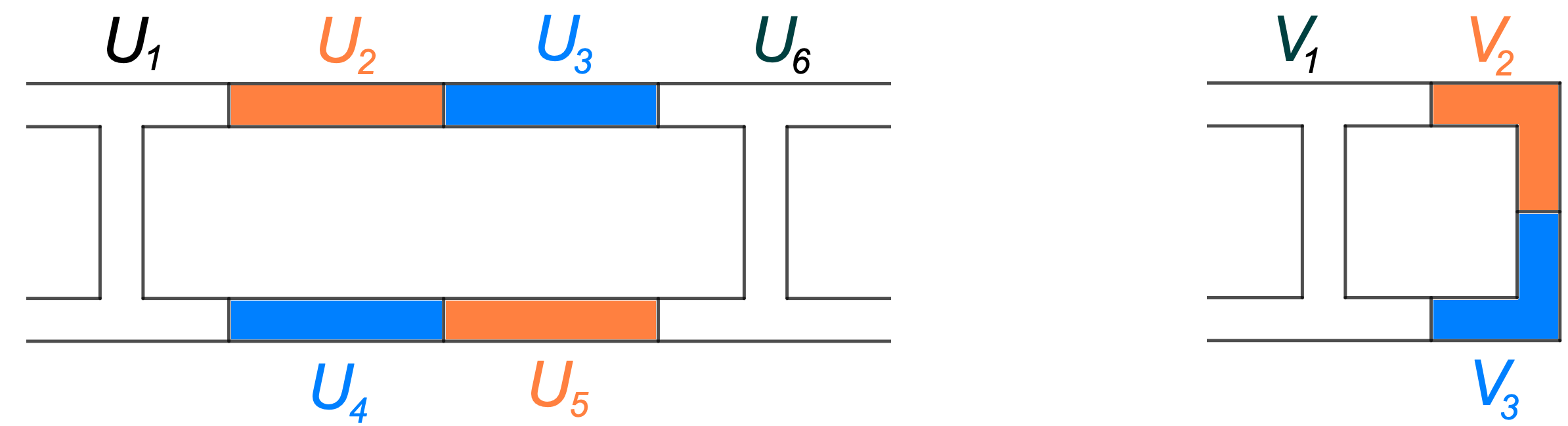}
\caption{The decompositions of  $T^{2k+1}$ (left) and of $T^{k+1}$ (right). For simplicity we have omitted the sets $T^{g}_{2k+1,i,j}$ and $T^{g}_{k+1,i,j}$.}
\label{fig5}
\end{figure}

Let  
\[ \iota_2 : \R^3 \to \R^3 \qquad\text{with}\quad \iota_2(x,y,z) = (2C_{2k+1,k}+{3}-x,1-y,z)\]
be the $\pi$-{radians} rotation with respect to the line $\{y=1/2\}\cap\{x=C_{2k+1,k}+\tfrac32\}$ and let
\[ \iota_3 : \R^3 \to \R^3 \qquad\text{with}\quad \iota_3(x,y,z) = (y+C_{k+1,k},C_{k+1,k+1}-x,z)\]
be the $\pi/2$-radians rotation with respect to the line $\{y=1/2\}\cap\{x=C_{k+1,k}+\tfrac12\}$.

Define $F_{2k+1}|U_1 = \text{Id}$ (which maps $U_1$ onto $V_1$) and $F_{2k+1}|U_6 = \iota_2$ (which maps $U_6$ onto $V_1$). We claim that there exists a bi-Lipschitz homeomorphism $h: U_2 \to V_2$ such that 
\begin{enumerate}
\item for each $j$, there exists unique $j'$ such that $h|\partial T^g_{2k+1,k+1,j}\cap U_2$ is an isometry mapping $\partial T^g_{2k+1,k+1,j}{\cap U_2}$ onto $\partial T^g_{k+1,k+1,j'}{\cap V_2}$;
\item $h|\partial U_1\cap\partial U_2 = \text{Id}$ and $h|\partial U_3\cap\partial U_2$ is a {clockwise $\pi/2$-radians rotation with respect to the line $\{(x,y,z) : x=C_{2k+1,k}+1, y=\frac12\}$ that maps $\partial U_3\cap\partial U_2$ onto $\partial V_3\cap\partial V_2$}.
\end{enumerate}
The construction of $h$ is elementary but tedious and we postpone its proof until Appendix \ref{ap:ext}. Assuming the existence of $h$, we define 
\begin{enumerate}
\item $F_{2k+1}|U_2 = h$ (which maps $U_2$ onto $V_3$), 
\item $F_{2k+1}|U_3 = \iota_3 \circ h(x-\tfrac34,y,z)$ (which maps $U_3$ onto $V_3$), 
\item $F_{2k+1}|U_4 = (F_{2k+1}|U_3)\circ\iota_2$ (which maps $U_4$ onto $V_3$), and 
\item $F_{2k+1}|U_5 = (F_{2k+1}|U_2)\circ\iota_2$ (which maps $U_5$ onto $V_2$). 
\end{enumerate}
It is easy to see that $F_{2k+1}$ is a degree 2 BLD branched covering.
\end{proof}

We are now ready to prove Proposition \ref{prop:1}. The proof follows the arguments in \cite[\textsection 4.1]{FS2} almost verbatim.

\begin{proof}[{Proof of Proposition \ref{prop:1}}]
Applying Lemma \ref{lem:folding} a total of $\lceil \log_2{g} \rceil$ many times, we obtain a degree $2^{\lceil \log_2{g} \rceil}$ BLD map 
\[ G : T^g \setminus \bigcup_{i,j} \int(T^g_{{g},i,j}) \to T^{1} \setminus\bigcup_{j=1}^{4N_g} \int(T^{g}_{1,1,j})\]
such that for each $i,j$ there exist $j'$ with $G(\partial T^g_{g,i,j}) = \partial T^{g}_{1,{1},j'}$. 

It remains to construct a degree $4N_g$ BLD map
\[ T^{1} \setminus\bigcup_{j=1}^{4N_g} \int(T^{g}_{1,1,j}) \to \overline{B}(0,2C_{g,g}) \setminus \int(T^g).\] 

We apply a bi-Lipschitz map $\Phi:T^1 \to {\R^3}$ that modifies $T^1$ in two ways. Firstly, we translate $T^1$ so that its core curve $\g_1$ is the 2-dimensional unit square
\[ \{(x,y,z) : z=0, \quad \max\{|x|,|y|\}=1\}.\]
Then, we apply a bi-Lipschitz map that is radial with respect to the $z$-axis so that 
\begin{enumerate}
\item $\Phi(T_1)$ is the closed $\frac12$-neighborhood of the circle $\S^1\times\{0\}$ in $\R^3$,
\item all the sets $\Phi(T^g_{1,1,1}),\dots, \Phi(T^g_{1,1,4N_g})$ satisfy
\[ \rho(\Phi(T^g_{1,1,j})) = \Phi(T^g_{1,1,j+2}) \quad \text{for $j\in\{1,\dots,4N_g\}$}\]
(with the convention $T^g_{1,1,4N_g+1}=T^g_{1,1,1}$ and $T^g_{1,1,4N_g+2} = T^g_{1,1,2}$) where $\rho$ is the rotation about the $z$-axis by an angle $\pi/N_g$,
\[ \rho(r,\theta,z) = (r,\theta+ \pi/N_g,z).\]
\end{enumerate}

This deformation is made to preserve the fact that all $\Phi(T^g_{1,1,j})$ are similar to each other. Finally, if necessary, rotate $\Phi(T^1)$ around the $z$-axis to ensure that the set $\bigcup_{j=1}^{4N_g} \int(\Phi(T^{g}_{1,1,j}))$ is symmetric with respect to a rotation about the $x$-axis by an angle $\pi$; see Figure \ref{fig6}.

\begin{figure}[h]
\includegraphics[width=0.3\textwidth]{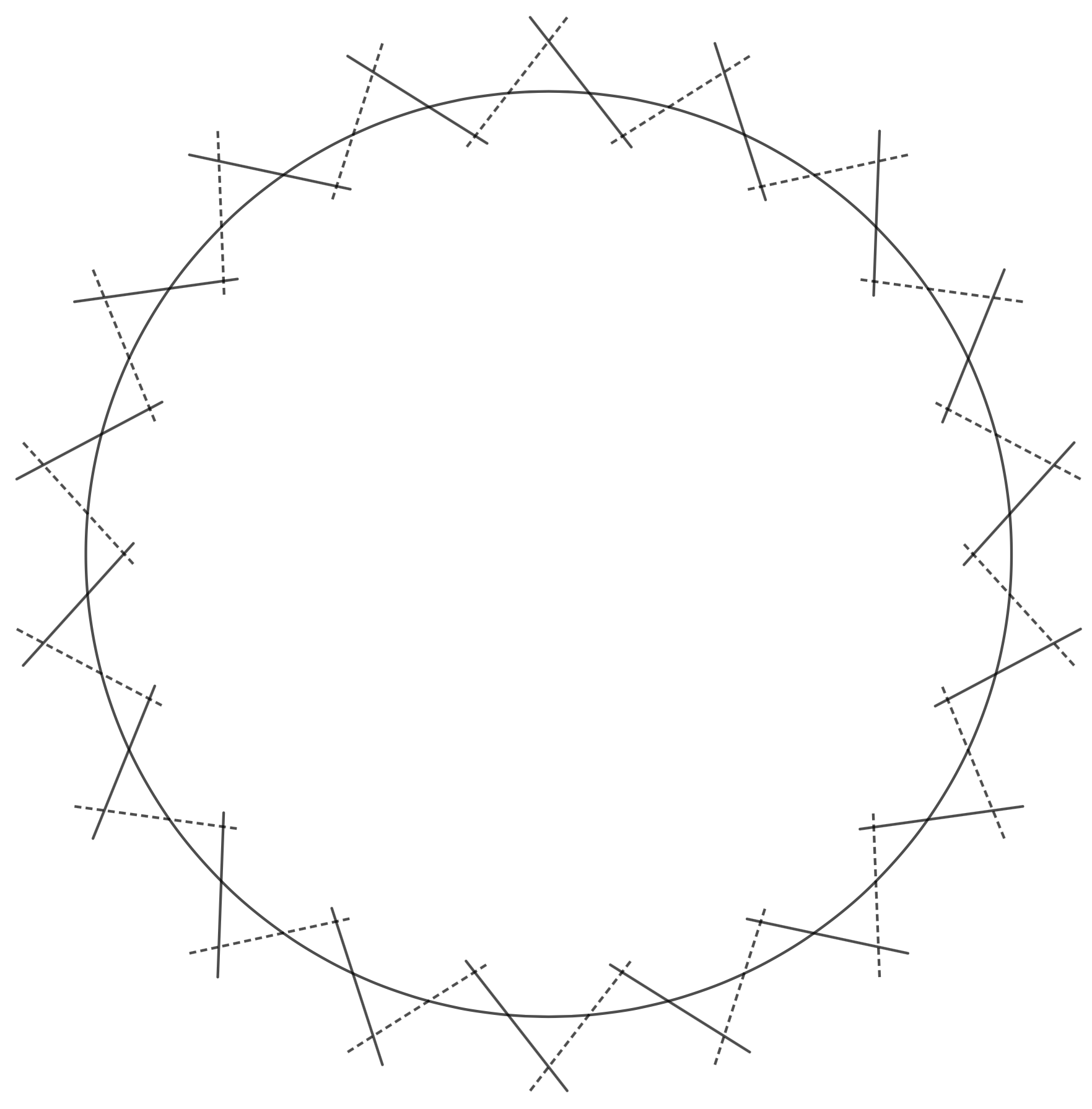}
\caption{The projections of ${\Phi(\g_1)}$ (the core curve of ${\Phi(T^1)}$) and of $({\Phi(T^{g}_{1,1,j})})_{j=1}^{4N}$ on the $xy$-plane. For simplicity we have chosen $N_g=9$.}
\label{fig6}
\end{figure}

Let $\omega : \R^3 \to \R^3$ be the degree $2N_g$ winding map
\[ \omega(r,\theta,z) = (r,2N_g\theta,z).\]
Then $\omega|{\Phi(T^1)} : {\Phi(T^1)} \to {\Phi(T^1)}$ is an unbranched covering that maps all {$\Phi(T^g_{1,1,j})$} with odd indices $j$ to $\omega({\Phi(T^g_{1,1,1})})$ and all ${\Phi(T^g_{1,1,j})}$ with even indices $j$ to $\omega({\Phi(T^g_{1,1,2})})$. By construction, $\omega({\Phi(T_{1,1,1}^g)})$ and $\omega({\Phi(T_{1,1,2}^g)})$ are linked inside ${\Phi(T^1)}$ (see Figure \ref{fig7}) and are symmetric to each other via a rotation about the $x$-axis by an angle $\pi$. Let $\iota$ be the involution for the latter rotation, that is
\[ \iota(x,y,z) = (x,-y,-z).\]
The associated winding map $q_{\iota}$ is a degree 2 sense preserving map under which $q_{\iota}(\omega({\Phi(T^g_{1,1,1})})) = q_{\iota}(\omega({\Phi(T^g_{1,1,2})}))$ is the image of ${\Phi(T^g)}$ under a bi-Lipschitz map of $\R^3$.

\begin{figure}[h]
\includegraphics[width=0.4\textwidth]{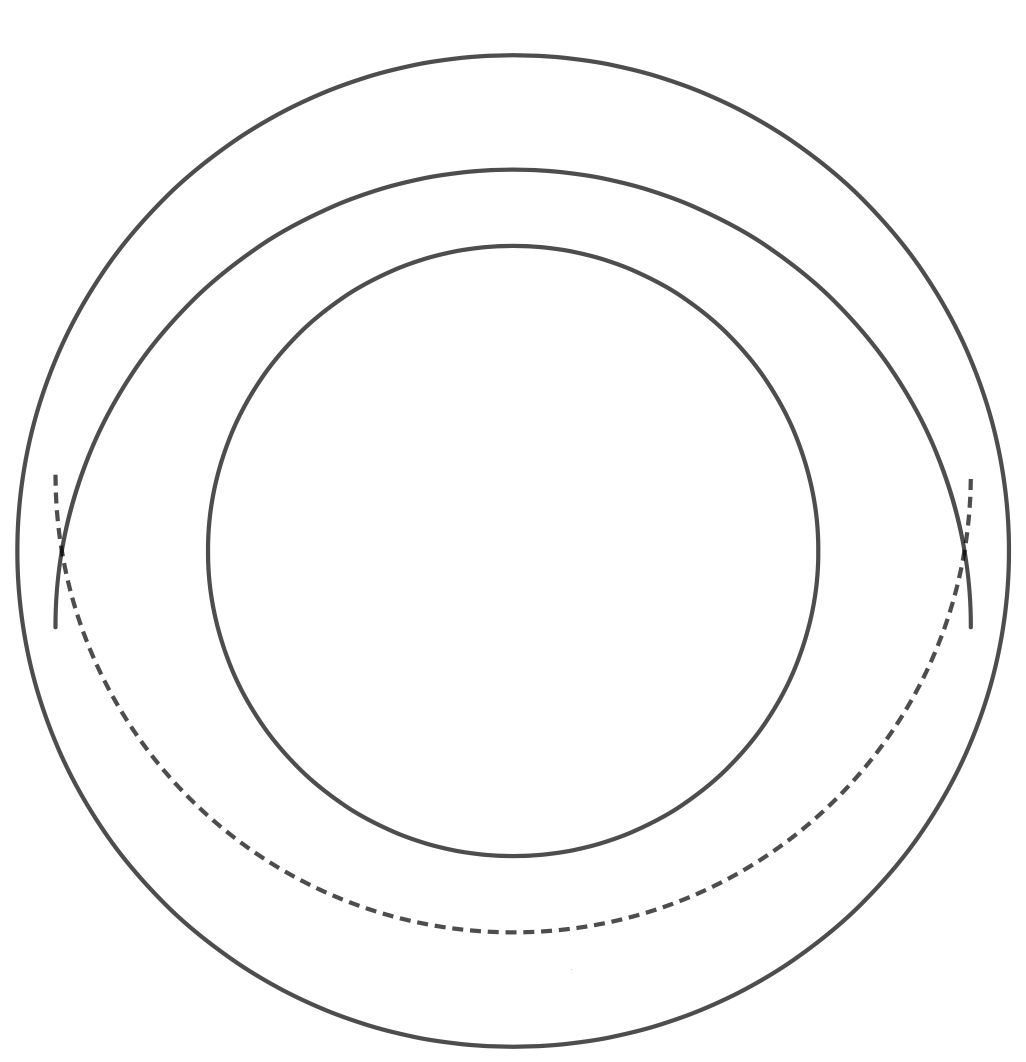}
\caption{{$\omega(\Phi(T^g_{1,1,1}))$ and $\omega(\Phi(T^g_{1,1,2}))$ linked inside $\omega(\Phi(T^1)) = \Phi(T^1)$.}}
\label{fig7}
\end{figure}

Post-composing with more bi-Lipschitz deformations, the map $q_{\iota}\circ\omega\circ G$ is a degree $4N_g 2^{\lceil \log_2{g} \rceil}$ branched covering from {$T^g\setminus \bigcup_{i,j} \text{int}(T_{g,i,j}^g)$} onto {$\overline{B}(0,4)\setminus \text{int}(T^g)$} mapping each $T^g_{g,i,j}$ onto $T^g$. Following the arguments in the proof of Lemma \ref{lem:folding} we can obtain a BLD version of the map $q_{\iota}\circ\omega\circ G$. This completes the proof of the proposition.
\end{proof}

{Note that since the branched covering in the previous Lemma is BLD, it is also quasiregular.}

\subsection{Construction of a UQR map} \label{subsec:uqr}
The construction of the UQR map of Theorem \ref{thm:1} follows closely the ideas in \cite[Section 5]{FW} and \cite[\textsection 4.2]{FS2} so we only sketch the arguments. We require the following two results.

\begin{theorem}
\label{thm:degreeUQR}
For every $d\in\N$ with $d>1$ and for every $n\in\N$, there is a UQR map $h: \overline{\R^3} \to \overline{\R^3}$ of degree $2^nd^2$ with Julia set $J(h) = \mathbb{S}^2$. {In addition}, for any $r>0$, $h(B(0,r)) = B(0,r^{2^{n/2}d})$.
\end{theorem}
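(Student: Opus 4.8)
The plan is to build the map from two standard ingredients: a power-type map that is conformal away from its branch set together with a suitable quasiconformal (or BLD) conjugacy that makes the power map uniformly quasiregular. First I would recall the classical construction of a UQR map of the sphere $\mathbb{S}^2 \subset \overline{\R^3}$ with Julia set exactly $\mathbb{S}^2$: one takes the ``$3$-dimensional power map'' modeled on $z \mapsto z^d$ in the plane, extended to a winding-type map $\R^3 \to \R^3$ that on each cylinder $\{r = \mathrm{const}\}$ multiplies the angle by $d$ and rescales the radial coordinate by $r \mapsto r^d$ in the direction transverse to $\mathbb{S}^2$, keeping $\mathbb{S}^2$ invariant. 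Concretely, using the ``bi-half-space'' coordinates in which $\mathbb{S}^2$ becomes a hyperplane, the map is conjugate by a bi-Lipschitz chart to the map $g(w,t) = (w^d, |w|^{d-1} t)$ or the Zorich-type extension of $z\mapsto z^d$; this is a well-known construction going back to Iwaniec--Martin and Mayer, and one checks directly that it is UQR (all iterates have the same dilatation because the map is a power of a fixed conformal-on-a-chart model), that $\mathbb{S}^2$ is completely invariant and repelling, and hence that $J(h) = \mathbb{S}^2$.

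Next I would insert the extra factor $2^n$ into the degree by pre-composing (or post-composing) with a degree-$2$ BLD branched self-cover of $\overline{\R^3}$ that fixes $\mathbb{S}^2$ and respects the dynamics, iterated $n$ times; alternatively, and more cleanly, one realizes the whole degree $2^n d^2$ at once by taking as the conformal-on-a-chart model the composition of the angle-multiplying winding map of degree $d$ with a two-fold folding/quotient map of the type used in Lemma \ref{lem:folding} (a $\pi$-rotation quotient), so that the overall topological degree in each of the two transverse half-spaces is $2^{n} d$, giving total degree $2^n d^2$. Since folding maps and winding maps are BLD and preserve the hyperplane model of $\mathbb{S}^2$, the composite is again BLD, its iterates have uniformly bounded dilatation, and $\mathbb{S}^2$ remains completely invariant and repelling, so $J(h)=\mathbb{S}^2$.

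Finally, for the scaling statement $h(B(0,r)) = B(0, r^{2^{n/2}d})$, I would arrange the chart so that near $0$ (which we may place at a superattracting/superrepelling fixed point off $\mathbb{S}^2$, or normalize $\mathbb{S}^2$ to be the unit sphere) the map is exactly radial: in the round coordinates $h$ sends the sphere of radius $r$ to the sphere of radius $r^{\lambda}$ with $\lambda$ the radial expansion exponent, and a bookkeeping of the exponents coming from $n$ foldings (each contributing a factor $\sqrt 2$ to the linear radial rate, i.e. a factor $2^{1/2}$ to the exponent after squaring conventions) and one degree-$d$ winding gives $\lambda = 2^{n/2} d$. I expect the main obstacle to be precisely this last normalization: making the single map genuinely radial on round balls centered at $0$ (rather than merely radial up to bi-Lipschitz error) while keeping it UQR, which forces a careful choice of the model map and of the bi-Lipschitz chart so that the radial and spherical parts decouple exactly. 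This is handled by choosing the chart to be a genuine bi-Lipschitz conjugacy only in an annular neighborhood of $\mathbb{S}^2$ and the identity (in round coordinates) near $0$, patching the two with a BLD interpolation, which is the standard device in \cite{FW} and \cite{FS2}.
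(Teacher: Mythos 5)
Your outline of the degree-$d^2$ power map is fine, but the heart of the theorem is the extra factor $2^n$, and that is where your argument has a genuine gap. You propose to obtain it either by composing with ``a degree-$2$ BLD branched self-cover of $\overline{\R^3}$ that fixes $\mathbb{S}^2$ and respects the dynamics'' or by a two-fold $\pi$-rotation quotient of the type used in Lemma \ref{lem:folding}; but such a quotient is a map onto a quotient space, not a self-map of $\overline{\R^3}$, and the existence of a degree-$2$ branched self-cover preserving $\S^2$ \emph{and compatible with the power-map dynamics} is exactly the content that has to be constructed---it cannot be taken off the shelf. Moreover, even granting such a map, your claim that the composite ``is again BLD, its iterates have uniformly bounded dilatation'' is unsupported: dilatations multiply under composition, and the iterates of a composite are not iterates of either factor, so uniform quasiregularity does not follow from BLD-ness. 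The paper resolves both points at once (Proposition \ref{prop:deg2}): it takes a Zorich map $\mathcal{Z}$, strongly automorphic with respect to the group $G$ generated by the two horizontal unit translations and the $\pi$-rotation about the $x_3$-axis, and the conformal linear map $A(x)=(x_1-x_2,\,x_1+x_2,\,\sqrt{2}x_3)$ (dilation by $\sqrt{2}$ composed with rotation by $\pi/4$); the lattice computation $AGA^{-1}\subset G$ then produces, via the Schr\"oder equation $f\circ\mathcal{Z}=\mathcal{Z}\circ A$, a genuine degree-$2$ UQR map with $J(f)=\S^2$. Because $f$ and Mayer's power maps $P_d$ are semiconjugate via the \emph{same} $\mathcal{Z}$ to conformal linear maps, they generate a quasiregular semigroup, so every word in them---in particular $h=f^n\circ P_d$---has dilatation controlled by that of $\mathcal{Z}$ alone, is UQR of degree $2^n d^2$, and has Julia set $\S^2$.

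The same structure also disposes of your worry about the radial normalization, which you propose to fix by patching in the identity near $0$ with a BLD interpolation---a modification that would risk destroying both the degree count and uniform quasiregularity. Since $\mathcal{Z}$ carries the horizontal plane $\{x_3=c\}$ exactly onto the sphere of radius $e^{c}$, and the linear models multiply $x_3$ by $\sqrt{2}$ and by $d$ respectively, the map $h=f^n\circ P_d$ sends the sphere of radius $r$ exactly onto the sphere of radius $r^{2^{n/2}d}$, so no patching is needed. Your degree bookkeeping (``degree $2^n d$ in each of the two transverse half-spaces, giving total degree $2^n d^2$'') is also off: the degrees come from lattice indices, $\deg P_d=d^2$ and $\deg f=2$, whence $\deg(f^n\circ P_d)=2^n d^2$.
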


We defer the proof of this result to Appendix \ref{ap:uqr}.
{We finally require a PL version of a result due to Berstein and Edmonds \cite{BE} on extending branched coverings over PL cobordisms. For the case with maximum generality in the PL setting, we refer to  \cite{PRW}. See also \cite[Theorem 0.3]{HR}.}

\begin{theorem}[{{See \cite[Theorem 6.2]{BE} and \cite[Theorem 3.1]{PRW}}}]\label{thm:BE}
Let $W$ be a connected, compact, oriented PL 3-manifold in some $\R^n$ whose boundary $\partial W$ consists of two components $M_0$ and $M_1$ with the induced orientation. Let $W' = N\setminus (\operatorname{int}(B_0) \cup \operatorname{int}(B_1))$ be an oriented PL 3-sphere $N$ in $\R^n$ with two disjoint polyhedral 3-balls removed, and have the induced orientation on its boundary. Suppose that $\phi_i : M_i \to \partial B_i$ is a sense-preserving oriented {PL} branched covering of degree $d\geq 3$ for each $i=0,1$. Then there exists a sense-preserving PL branched covering $\phi: W \to W'$ of degree $d$ that extends $\phi_0$ and $\phi_1$.
\end{theorem}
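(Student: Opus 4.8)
The plan is to build the required branched cover slab by slab over a handle decomposition of $W$, reducing everything to a short list of explicit local models. First I would note that $W' = N \setminus (\int(B_0) \cup \int(B_1))$ is PL homeomorphic to $\S^2 \times [0,1]$, with $\partial B_0$ and $\partial B_1$ corresponding to $\S^2\times\{0\}$ and $\S^2\times\{1\}$; fix such an identification, so the problem becomes: extend $\phi_0$ and $\phi_1$ to a PL branched cover $\phi \colon W \to \S^2 \times [0,1]$ of degree $d$ whose branch set is a properly embedded $1$-complex meeting $\partial W$ exactly in the branch points of $\phi_0$ and $\phi_1$. Attaching product collars and extending $\phi_i$ over them as $\phi_i \times \mathrm{id}$, I may assume $W$ comes with a handle decomposition built on $M_0 \times [0,1]$ by attaching handles $h_1,\dots,h_\ell$ in order of non-decreasing index, with $M_0 \times \{0\}$ already covering $\S^2 \times \{0\}$ via $\phi_0 \times \mathrm{id}$. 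Correspondingly I would subdivide the target into $\ell+1$ product slabs $\S^2 \times [t_{j-1},t_j]$, one for the collar and one for each handle, and construct $\phi$ over these slabs in turn.

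The heart of the matter is a local extension lemma. Given a degree $d$ branched cover $p\colon F \to \S^2$ of a closed orientable surface and a $k$-handle $H$ to be attached to $F \times \{1\}$ along $A \subset F\times\{1\}$, one must produce a branched cover of $(F\times[0,1]) \cup_A H$ over $\S^2\times[0,1]$ that agrees with $p\times\mathrm{id}$ on $F\times\{0\}$, restricts to $p$ on the part of the boundary coming from $F$, and realizes upstairs exactly the surgery prescribed by the handle decomposition of $W$. Concretely this needs four pictures, one per index: index $0$ and $3$ handles call for a branched cover $B^3 \to B^3$ extending a prescribed degree $d$ branched cover $\S^2 \to \S^2$ (creating, respectively capping off, a sphere component upstairs), while index $1$ and $2$ handles call for a branched cover of a $1$-handle interpolating between two covers related by an arc-surgery. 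The compatibility condition at each step is that the monodromy around the new branch arcs, read along the attaching sphere, matches the monodromy of $p$ already defined there; this is where the hypothesis $d\geq 3$ is indispensable, since one needs a spare transposition in $S_d$ and must exploit the non-abelianness of $S_d$ to reconcile the combinatorial data on the two sides of the handle. For $d=2$ the cover is rigidly determined by its codimension-two branch locus and no such maneuvering is possible, which is exactly why the statement excludes that case. I expect verifying the existence and mutual compatibility of these local models, index by index, to be the main obstacle.

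Carrying out the construction over every slab yields a branched cover $\phi\colon W \to \S^2\times[0,1]$ restricting to $\phi_0\times\mathrm{id}$ at the bottom and to some degree $d$ branched cover $\psi\colon M_1 \to \S^2$ at the top, and it remains to arrange $\psi = \phi_1$. Since $M_1$ is a fixed closed orientable surface and both $\psi$ and $\phi_1$ are degree $d\geq 3$ branched covers of $\S^2$, a Hurwitz-type transitivity argument (after perturbing to simple branched covers, any two such covers are connected by a finite sequence of braid-type moves, and each such move is realized by an elementary ``branched cobordism'' over a standard slab $\S^2\times[0,1]$) lets me pre-compose $\psi$ with a string of such slabs to convert it into $\phi_1$. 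This only glues a copy of $M_1\times[0,1]$ onto $W$ along $M_1$, so the PL type of $W$ is unchanged; concatenating everything gives the desired extension, with branch set still a PL $1$-complex since that is what all the local models produce.
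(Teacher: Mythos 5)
You should first be aware that the paper does not prove this statement at all: it is imported verbatim as a citation, namely \cite[Theorem 6.2]{BE} (Berstein--Edmonds), and is used as a black box in Section \ref{subsec:uqr} to interpolate between the two boundary covers. So there is no in-paper argument to compare yours against; the relevant comparison is with the Berstein--Edmonds proof itself. Your outline does track their strategy in broad strokes --- identify $W'$ with $S^2\times[0,1]$, build the cover handle by handle over a decomposition of $W$ rel $M_0$, use $d\geq 3$ to gain monodromy flexibility in $S_d$, and finish with a Hurwitz-moves argument at the top boundary --- so as a reconstruction of the cited source it is pointed in the right direction.

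As a standalone proof, however, it has two genuine gaps, both of which are exactly the technical content of the cited paper. First, the ``local extension lemma'' over $1$-, $2$- and $3$-handles is asserted but not constructed; you yourself flag it as the main obstacle, and it is: producing a branched cover of a handle attachment compatible with prescribed monodromy on the attaching region, and showing $d\geq 3$ suffices, is where all the work lies (this occupies several sections of Berstein--Edmonds). Second, the final step conflates equivalence with equality. The Hurwitz/Clebsch-type statement gives that the cover $\psi\colon M_1\to S^2$ produced at the top is \emph{equivalent} to $\phi_1$ after stabilization to simple covers, i.e.\ $\phi_1\circ h=k\circ\psi$ for homeomorphisms $h$ of $M_1$ and $k$ of $S^2$; to conclude you need a branched cover of $M_1\times[0,1]$ over $S^2\times[0,1]$ restricting to $\psi$ at one end and to the \emph{given} map $\phi_1$ at the other, and gluing an external collar ``by $h$'' changes the boundary parametrization, so it does not automatically yield an extension of $\phi_1$ on $W$ itself. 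Absorbing $h$ and $k$ (only $k$ is automatically isotopic to the identity) is precisely the branched-cobordism theorem for surface coverings that Berstein--Edmonds prove and then feed into the 3-dimensional argument; without it, your last paragraph does not close.
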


Recall the constant $C_{g,g}$ from \textsection\ref{sec:seq} and set
\[ R = (4C_{g,g})^{2\sqrt{N_g}2^{\frac12 \lceil \log_2{g}\rceil}}.\]
Let $B_0 = B(0,4C_{g,g})$, $B_{-1} = B(0,R)$. We decompose $\R^3$ in two different ways:
\begin{align*}
\R^3 &= \left(\bigcup_{i,j}T^g_{g,i,j}\right) \cup \left( T^g \setminus \bigcup_{i,j}T^g_{g,i,j} \right) \cup (B_0 \setminus T^g) \cup (\R^3 \setminus B_0)
\end{align*}
and
\begin{align*}
\R^3 &= T^g \cup (B_0 \setminus T^g) \cup (B_{-1} \setminus B_0) \cup (\R^3\setminus B_{-1}).
\end{align*}
Define now a map $f:\R^3 \to \R^3$ in the following way.
\begin{enumerate}
\item Set 
\[ f : T^g \setminus \bigcup_{i,j}\text{int}(T^g_{g,i,j}) \to B_0 \setminus \int(T^g)\]
to be the map of Proposition \ref{prop:1}.
\item Extend $f$ to $\bigcup_{i,j}T^g_{g,i,j}$ by setting 
\[ f|T^g_{g,i,j}  = \psi_{i,j}^{-1} : T^g_{g,i,j} \to T^g.\]
Recall the definitions of $\psi_{i,j}$ from \textsection\ref{sec:Cantor}.
\item Define $f:\R^3\setminus \text{int}(B_0) \to \R^3 \setminus \text{int}(B_{-1})$ to be the restriction of the UQR map of degree $2^{\lceil \log_2{g}\rceil}4N_g$ from Theorem \ref{thm:degreeUQR}. Note that $f|\R^3\setminus \text{int}(B_0)$ is orientation preserving and maps $S(0,4C_{g,g})$ onto $S(0,R)$.
\item Since $f|\partial B_0$ is a BLD degree $2^{\lceil \log_2{g}\rceil}4N_g$ map onto $\partial B_{-1}$, and since $f|\partial T^g$ is also a degree $2^{\lceil \log_2{g}\rceil}4N_g$ BLD map onto $\partial B_0$, by Theorem \ref{thm:BE} there exists a degree $2^{\lceil \log_2{g}\rceil}4N_g$ BLD extension $f: B_0 \setminus \text{int}(T^g) \to B_{-1}\setminus \text{int}(B_0)$. {It is understood here that $C_1$-triangulation has been carried out on $B_0 \setminus \text{int}(T^g)$ and $B_{-1}\setminus \text{int}(B_0)$ before applying Theorem \ref{thm:BE}.}
\end{enumerate}

The map $f:\R^3 \to \R^3$ defined above is UQR and of polynomial type \cite[Lemma 4.1]{FS2} (see also \cite[Lemma 5.1]{FW}) and its Julia set is equal to $X_g$ \cite[Lemma 4.2]{FS2} (see also \cite[Lemma 5.2]{FW}).

\section{Genus and Julia sets}\label{sec:genusjulia}

First in this section, we prove Theorem \ref{thm:3}. In fact we will show something stronger. Namely that if $X$ is a Cantor set and the Julia set of a hyperbolic UQR map, then there exists a defining sequence that, up to similarities, contains only finitely many different elements. 

Let us first recall two notions from metric geometry. A metric space $(X,d)$ is \emph{$c$-uniformly perfect} if there exists $c\geq 1$ such that for all $x\in X$ and all $r\in (0,\diam{X})$, $\overline{B}(x,r)\setminus B(x,r/c)$. A metric space $(X, d)$ is \emph{$c$-uniformly disconnected} \cite{DSbook} if there exists $c \geq 1$ such that for any $r \in (0,\diam{X})$ and any $x \in X$ there exists a set $E \subset X$ containing $x$ such that $\diam{E} \leq r$ and $\dist(E, X \setminus E) \geq r/c$. 

Before stating our key lemma, we introduce some terminology. Fix $N\in\N$. We denote by $\varepsilon$ the empty word. Given an integer $n\geq 0$ we denote by $\{1,\dots,N\}^n$ all words formed from the alphabet $\{1,\dots,N\}$ that have exactly $n$ letters with the convention $\{1,\dots,N\}^0 = \{\varepsilon\}$. Define also the set of all finite words
\[ \{1,\dots,N\}^* = \bigcup_{n\geq 0} \{1,\dots,N\}^n.\]
The length of a finite word $w$ is denoted by $|w|$. Two finite words $w,u$ can be concatenated to $wu$ in the obvious way.

The next lemma states that sets that are uniformly perfect and uniformly disconnected admit a defining sequence which, up to similarities, contains finitely many elements in a quantitative way. We prove in fact a stronger version which yields an interesting uniformization of such Cantor sets which may be on independent interest; see Appendix \ref{ap:QRunif}.

\begin{lemma}\label{lem:finite defining sequence}
Given $c>1$ there exist constants $p,N,C_0\in \N$, a finite collection $\{\mathcal{M}_1,\dots,\mathcal{M}_l\}$ of PL handlebodies in $\R^3$, and a finite collection $\{\mathcal{N}_1,\dots,\mathcal{N}_q\}$ of PL 3-manifolds with boundary, each $\mathcal{N}_j$ having at least three boundary components, that satisfy the following. If $X\subset \R^3$ is compact, $c$-uniformly perfect and $c$-uniformly disconnected, then there exist a ``dictionary'' $\mathcal{W} \subset \{1,\dots,N\}^*$, a map ${\bf i}:\mathcal{W} \to \{1,\dots,l\}$, a map ${\bf j}:\mathcal{W} \to \{1,\dots,q\}$, and similarities $\{\phi_w:\R^3 \to\R^3\}_{w\in \mathcal{W}}$ such that
\begin{enumerate}
\item[(P1)] 
\begin{enumerate}
\item the empty word $\varepsilon \in \mathcal{W}$,
\item for every $w\in \mathcal{W}$ there exists $N_w \in \{2,\dots,N\}$ such that $wi\in \mathcal{W}$ if and only if $i\in\{1,\dots,N_w\}$,
\item if $iw \in \mathcal{W}$ for some $w\in \{1,\dots,N\}^*$ and $i\in \{1,\dots,N\}$, then $w\in \mathcal{W}$,
\end{enumerate} 
\item[(P2)] for all $w\in\mathcal{W}$, $\phi_w$ has scaling factor $p^{-|w|}$,
\item[(P3)] for all $w\in\mathcal{W}$ and all $i\in\{1,\dots,N_w\}$, $\phi_{wi}(\mathcal{M}_{{\bf i}(wi)}) \subset \phi_w(\mathcal{M}_{{\bf i}(w)})$ and
\[ \dist(\phi_{wi}(\mathcal{M}_{{\bf i}(wi)}),\partial \phi_{w}(\mathcal{M}_{{\bf i}(w)})) \geq C_0^{-1}p^{-|w|-1},\]
\item[(P4)] for all distinct $w,w' \in \mathcal{W}$ with $|w|=|w'|$
\[ \dist(\phi_{w}(\mathcal{M}_{{\bf i}(w)}), \phi_{w'}(\mathcal{M}_{{\bf i}(w')})) \geq p^{-|w|},\]
\item[(P5)] for all $w\in\mathcal{W}$, $\phi_{w}(\mathcal{M}_{{\bf i}(w)})\cap X \neq \emptyset$ and
\[ \dist(\partial \phi_{w}(\mathcal{M}_{{\bf i}(w)}), X) \geq p^{-|w|}, \qquad p^{-|w|}\leq \diam{\phi_{w}(\mathcal{M}_{{\bf i}(w)})} \leq C_0 p^{-|w|},\]
\item[(P6)] for each $w\in \mathcal{W}$, 
\[ \mathcal{M}_{{\bf i}(w)} \setminus \bigcup_{i=1}^{N_w}\phi_w^{-1}\circ\phi_{wi}(\mathcal{M}_{{\bf i}(wi)}) = \mathcal{N}_{{\bf j}(w)},\]
\item[(P7)] the set $X$ is the limit of the $k$-th level approximations:
\[ X = \bigcap_{k\geq 0} \bigcup_{\substack{w \in \mathcal{W} \\ |w|=k}}\phi_{w}(\mathcal{M}_{{\bf i}(w)}).\]
\end{enumerate}
\end{lemma}

\begin{proof}
The proof uses MacManus’ cubical approximation of uniformly disconnected sets \cite{MM2}. Given $\d>0$, define $\mathcal{D}_{\d}$ to be the collection of connected $3$-manifolds with boundary that are the union of finitely many cubes in the collection
\[ \left\{ [m_1\d,(m_1+1)\d]\times[m_2\d,(m_2+1)\d]\times [m_3\d,(m_3+1)\d] : m_1,m_2,m_3\in \mathbb{Z} \right\}.\]

Let $X$ be $c$-uniformly perfect and $c$-uniformly disconnected. By \cite[Corollary 5.2]{BV}, there exist constants $C_0,p,N \in\N$ depending only on $c$, there exists a dictionary $\mathcal{W}\subset \{1,\dots,N\}^*$, and there exists a family $\{M_w:w\in\mathcal{W}\}$ of $3$-manifolds with boundary in $\R^3$ such that
\begin{enumerate}
\item $\mathcal{W}$ satisfies (P1),
\item for all $w\in \W$, $M_w \in \mathcal{D}_{p^{-|w|}}(X)$ and $\diam{M_w} \leq C_0p^{-|w|}$,
\item for all distinct $w,w' \in \W$ with $|w|=|w'|$,  $\dist(M_w,M_{w'}) \geq p^{-|w|}$,
\item for all $w\in\W$ and for all $i\in \{1,\dots,N_w\}$, $M_{wi} \subseteq M_w$ and
$$ \dist(M_{wi},\partial M_w) \geq C_0^{-1}p^{-|w|},$$
\item for all $w\in \W$, the intersection $X \cap M_w \neq \emptyset$ and $\dist(\partial M_w, X) \geq p^{-|w|}$,
\item the set $X$ is the limit of the $k$-th level approximations: \[ X = \bigcap_{k\geq 0} \bigcup_{\substack{w \in \mathcal{W} \\ |w|=k}}M_{w}.\]
\end{enumerate} 

Note that the manifolds $M_w$ are not assumed to have connected boundaries and that $N_w$ may equal 1 for some words $w\in \mathcal{W}$.

The first issue can be resolved as follows. For each $k\in\N$ and each $w\in \mathcal{W}$ with $|w|=k$ replace $M_w$ by $\overline{\R^3\setminus U_w}$ where $U_w$ is the unbounded connected component of $\R^3 \setminus M_w$. If for some $w,u \in \mathcal{W}$ with $|w|=|u|=k$ we have $M_u \subset M_w$, then we remove $u$ from $\mathcal{W}$. After re-indexing, we obtain a new dictionary $\mathcal{W}$  and a new collection $\{M_w: w\in \mathcal{W}\}$ of 3-manifolds with boundary that have connected boundaries and satisfy all properties above (with the same constants $p,C_0,N$). 

To fix the second issue, we remark that by $c$-uniform perfectness and by (4) above, we have that for all $w\in\mathcal{W}$,
\[ \diam(X \cap M_w) \geq c^{-1}p^{-|w|}.\]
Therefore, assuming that $p> C_0 c$, we have by (4) above that for all $w\in\mathcal{W}$ and for all $i\in\{1,\dots,N\}$ such that $wi\in \mathcal{W}$,
\[ \diam{M_{wi}} < \diam(X \cap M_w)\]
which yields that $N_w \geq 2$.

Denote by $\mathscr{M}$ the collection of all PL handlebodies $M\in \mathcal{D}_1$ such that $M\subset [0,C_0+1]^3$. Furthermore, denote by $\mathscr{N}$ the collection of all $M \in \mathcal{D}_1$ such that $M \subset[0,C_0+1]^3$ and $M$ has at least three boundary components. Since collections $\mathscr{M}$ and $\mathscr{N}$ are finite, we can enumerate them $\mathscr{M} = \{\mathcal{M}_1,\dots,\mathcal{M}_l\}$ and $\mathscr{N} = \{\mathcal{N}_1,\dots,\mathcal{N}_q\}$.

For each $w\in \mathcal{W}$, let $\phi_w$ be a similarity map of $\R^3$ with scaling factor $p^{-|w|}$ such that $\phi_w^{-1}(M_w)\in \mathcal{D}_1$. Note that $\phi_w^{-1}(M_w)$ has diameter at most $C_0$ so modifying $\phi_w$ we may further assume that $\phi_w^{-1}(M_w) \subset [0,\lceil C_0\rceil +1]^3$. Therefore, for each $w\in\mathcal{W}$, we have $\phi_w^{-1}(M_w) \in \mathscr{M}$. Similarly, for each $w\in\mathcal{W}$, we have $\phi_w^{-1}(M_w \setminus \bigcup_{i=1}^{N_w}M_{wi}) \in \mathscr{N}$. This completes the proof of the lemma.
\end{proof}

We can now show Theorem \ref{thm:3}.

\begin{proof}[{Proof of Theorem \ref{thm:3}}]
Let $X$ be the Julia set of a hyperbolic UQR map $f:\overline{\R^3} \to \overline{\R^3}$. Without loss of generality, we may assume that $\infty \not\in X$. By \cite[Theorem 1.1]{FV}, $X$ is compact and uniformly disconnected, and by \cite[Theorem 1.1]{FN11}, $X$ is uniformly perfect. 

Let $\mathcal{W}$, $\{\mathcal{M}_1,\dots,\mathcal{M}_l\}$, ${\bf i}:\mathcal{W} \to\{1,\dots,l\}$, and $\{\phi_w:\R^3 \to \R^3\}_{w\in \mathcal{W}}$ be the dictionary, finite collection of handlebodies, function, and similarities as in Lemma \ref{lem:finite defining sequence}. Then,
\[ \left(\bigcup_{w\in \mathcal{W}, |w|=i}\phi_w(\mathcal{M}_{{\bf i}(w)}) \right)_i  \]
is a defining sequence for $X$ and
\[ g(X) \leq \sup_{w\in \mathcal{W}}g(\phi_w(\mathcal{M}_{{\bf i}(w)})) \leq \max_{i=1,\dots,l}g(\mathcal{M}_i) < \infty. \qedhere\]
\end{proof}

Next, we prove Theorem \ref{thm:4}. The key is the following lemma on the local genus.

\begin{lemma}
\label{lem:localgenus}
Let $X$ and $Y$ be Cantor sets in $\R^3$ and let $x\in X$. Suppose there exists a neighborhood $U$ of $x$ and a homeomorphism $h$ from $U$ onto a neighborhood $V$ of $y=h(x)$ such that $h(X\cap U) = Y\cap V$. Then $g_x(X) = g_y(Y)$.
\end{lemma}

\begin{proof}
Let us first fix a defining sequence $(M_n)$ for $X$ and recall that $M_n^x$ is the component of $M_n$ containing $x$. Now consider any defining sequence $\widetilde{M}_n$ of $Y$. As $\diam \widetilde{M}_n^y \to 0$ as $n\to \infty$, there exists $N\in \N$ such that $\widetilde{M}_n^y \subset V$ for $n\geq N$.

We will build a new defining sequence $\widehat{M}_n$ for $Y$ as follows. If $n\geq N$, we leave all the components of $\widetilde{M}_n$ alone, except for $\widetilde{M}_n^y$. As $\widetilde{M}_n^y$ is a handlebody with $\partial \widetilde{M}_n^y \subset V \setminus Y$, it follows that $h^{-1} (  \widetilde{M}_n^y )$ is a handlebody in $U$ with boundary contained in $U\setminus X$. 

In particular, there exists a minimal integer $k=k(n)$ such that the union of the boundaries of components of $M_k$ does not intersect $h^{-1} (  \widetilde{M}_n^y )$. Let $\Omega_k$ denote the union of the components of $M_k$ contained in $h^{-1} (  \widetilde{M}_n^y )$. Then for our new defining sequence of $Y$, we may replace $\widetilde{M}_n^y$ with $h(\Omega_k)$.

In $\widehat{M}_n'$ we have $\widehat{M}_n^y = h ( M_k^x)$. As $h$ is a homeomorphism, the genus of $\widehat{M}_n^y$ is the same as $M_k^x$. Performing the same procedure for all defining sequences $M_n$ of $X$ and taking an infimum, we see that
\[ g_y(Y) \geq g_X(X) .\]
Switching the roles of $x\in X$ and $y\in Y$ and using the fact that $h^{-1}$ is also a homeomorphism, the above argument shows that
\[ g_x(X) \geq g_y(Y),\]
which completes the proof.
\end{proof}

\begin{proof}[Proof of Theorem \ref{thm:4}]
By Theorem \ref{thm:3}, the genus of $J(f)$ is finite. Since $g_x(J(f)) \leq g(J(f))$ for any $x\in J(f)$, the local genus is also finite at every point. So suppose $g_x(J(f)) = g$ and let $y$ be in the grand orbit of $x$. Then there exists an integer $n$ such that either $f^n(x) = y$ or $f^n(y) = x$.

As $f$ is hyperbolic, $f^n$ is a local homeomorphism at every point of $J(f)$. By complete invariance, $y\in J(f)$. Lemma \ref{lem:localgenus} now yields the result.
\end{proof}

\section{Non-constant local genus}\label{sec:genusg0}

In this section, we modify the construction from Section \ref{sec:julia} to give an example of a UQR map $f$ with $J(f)$ a genus $g$ Cantor set, for $g\geq 1$, and so that local genus of both $0$ and $g$ is achieved. 

\begin{proof}[Proof of Theorem \ref{thm:5}]
Recall the BLD map 
\[ F: T^g \setminus \bigcup_{i,j} \int(T^g_{g,i,j}) \to \overline{B}(0,{4}C_{g,g}) \setminus \int(T^g)\] 
from Proposition \ref{prop:1}. 
For brevity, denote by $U$ the domain of $F$ and by $V$ the range.
As $F(\mathcal{B}(F))$ has topological dimension at most $1$, we can find $x\in V$ and $\epsilon_1 >0$ so that $B(x,\epsilon_1) \subset V \setminus F(\mathcal{B}(F))$.

By shrinking $\epsilon_1$ if neccesary, we may assume that $F^{-1}(B(x,\epsilon_1))$ consists of $k=\operatorname{deg} F$ disjoint topological balls $E_1,\ldots, E_k$ and the restriction of $F$ to each $E_j$, for $j=1,\ldots, k$, is a homeomorphism. Let $u_j = F^{-1}(x) \cap E_j$ for $j=1,\ldots, k$. Choose
\[ \epsilon_2 < \frac{1}{2} \min \{ \epsilon_1 , \operatorname{dist}(u_1,\partial E_1),\ldots, \operatorname{dist}(u_k,\partial E_k) \} .\]
For $j=0,\ldots, k$, find affine maps $A_j$ such that $A_0$ maps $T^g$ into $B(x,\epsilon_2)$, and for $j=1,\ldots, k$, $A_j$ maps $T^g$ into $B(u_k,\epsilon_2)$. We view the collection $A_1(T^g),\ldots, A_k(T^g)$ as satellites to the first level $M_1$ of the defining sequence for $X_g$.

Recall the maps $\phi_1,\ldots, \phi_m$ from \eqref{eq:phim}. To these we add the maps $A_1,\ldots, A_k$ and relabel via $\xi_1,\ldots, \xi_{m+k}$ where $\xi_i = \phi_i$ if $i\in\{1,\dots,m\}$ and $\x_i = A_{i-m}$ if $i\in\{m+1,\dots,m+k\}$. As the images $\xi_i (T^g)$ and $\xi_j(T^g)$ are pairwise disjoint for $i\neq j$, we may define the Cantor set 
\[ Y_g = \bigcap_{n=1}^{\infty} \bigcup_{j_1,\ldots, j_n \in \{1,\ldots,m+k\} } \xi_{j_1} \circ \cdots \circ \xi_{j_n} (T^g).\]
As $Y_g$ contains $X_g$, its genus is at least $g$, and as the description above includes a defining sequence consisting of genus $g$ tori, the genus of $Y_g$ is also $g$.

Clearly the elements of $Y_g$ that are also contained in $X_g$ have local genus equal to $g$. To see that some elements of $Y_g$ have local genus zero, we observe that we can construct a different defining sequence for $Y_g$. The first level $\widetilde{M}_1$ of this new defining sequence for $Y_g$ is the first level of the defining sequence $M_1$ for $X_g$ together with the balls $B(u_1,\epsilon_2),\ldots, B(u_k,\epsilon_2)$. This yields the alternate description
\[ Y_g = \bigcap_{n=1}^{\infty} \bigcup_{j_1,\ldots, j_n \in \{1,\ldots,m+k\} } \xi_{j_1} \circ \cdots \circ \xi_{j_n} (\widetilde{M}_1).\]
If we let $y$ be the unique point in
\[\bigcap_{n=1}^{\infty} \xi_{m+k}^n(B(u_k,\epsilon_2)),\]
then $y\in Y_g$ and $\widetilde{M}_i^y$ is a ball for each level $i\geq 1$. We conclude that the local genus of $y$ is equal to $0$ and, assuming for the moment that $Y_g$ can be realized as a Julia set, Theorem \ref{thm:4} shows that there is a dense subset of $Y_g$ with local genus $0$.

Finally, we have to show that $Y_g$ can be realized as a Cantor Julia set. We modify the map $F$ above to $\widetilde{F}$ as follows. 
\begin{itemize}
\item On $U \setminus  \bigcup_{j=1}^k E_j$ we set $\widetilde{F} = F$. 
\item For each $j=1,\ldots, k$, we redefine $F$ on $\overline{B}(u_j,\epsilon_2)\setminus A_j(T^g)$ to be an isometry onto $\overline{B}(x,\epsilon_2) \setminus A_0(T^g)$. 
\item For $j=1,\ldots, k$, we use the bi-Lipschitz version of the Annulus Theorem \cite[Theorem 3.17]{TV} to extend $\widetilde{F}$ to a bi-Lipschitz map from $E_j \setminus \overline{B}(u_j,\epsilon_2)$ to $B(x,\epsilon_1) \setminus \overline{B}(x,\epsilon_2)$.
\end{itemize}

This yields a BLD map 
\[ \widetilde{F} : U \setminus \bigcup_{j=1}^k A_j(T^g) \to V \setminus A_0(T^g).\]
The image is the ball $\overline{B}(0,2C_{g,g})$ with two similar unlinked genus $g$ solid tori removed. By applying an auxiliary bi-Lipschitz map to $\overline{B}(0,2C_{g,g})$, we may obtain the images of the two removed tori are symmetric under the involution $\iota(x,y,z) =( x,-y,-z)$. The corresponding winding map $q_{\iota}$ is a degree $2$ sense preserving map which identifies the two tori removed from $\overline{B}(0,2C_{g,g})$. Proceeding as in the proof of Proposition \ref{prop:1}, and by applying further bi-Lipschitz deformations if necessary, we obtain the BLD map
\[ q_{\iota}\circ \widetilde{F} : U \setminus \bigcup_{j=1}^k A_j(T^g) \to \overline{B}(0,2C_{g,g}) \setminus \int(T^g).\]
The construction of the UQR map nows proceeds almost identically to the construction from Section \ref{subsec:uqr}. The only difference is that the UQR power map in a neighborhood of infinity has degree $2^{\lceil \log_2{g}\rceil + 1}4N_g$.
\end{proof}

\appendix

\section{A bi-Lipschitz deformation}\label{ap:ext}

Here we prove the existence of the map $h: U_2 \to V_2$ in Lemma \ref{lem:folding}. {For the rest of this appendix, we write $N=N_g$ where $N_g$ is the constant in \eqref{eq:N}. Recall that $T^g_{k+1,k+1,i} \cap V_2$ if and only if $i\in\{\tfrac12(N+1),\dots, \tfrac32(N+1)\}$. }

The construction of $h$ is based on two results. The first is an extension theorem of V\"ais\"al\"a.

\begin{theorem}[{\cite[Corollary 5.20]{Vais}}]\label{thm:Vaisext}
Let $n\geq 2$ and $\Sigma \subset \R^n$ be a compact PL manifold of dimension $n$ or $n-1$ with or without boundary. Then there exist $L,L' >1$ depending on $\Sigma$, such that every $L$-bi-Lipschitz embedding $F: \Sigma \to \R^n$ extends to an $L'$-bi-Lipschitz map $F:\R^n \to \R^n$.
\end{theorem}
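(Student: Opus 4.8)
The plan is to establish the theorem from three building blocks, in the spirit of Väisälä's general extension machinery. First, the \emph{model pieces} of a PL $n$-manifold — the flat sets $\R^k\times\{0\}\subset\R^n$ and their half-space variants $\R^{k-1}\times[0,\infty)\times\{0\}$ for $k\in\{n-1,n\}$ — should be shown to have the bi-Lipschitz extension property, with the extension constant depending only on the input constant. Second, this property is invariant under a global bi-Lipschitz change of coordinates (if $A$ has it and $g$ is globally bi-Lipschitz, then $g(A)$ has it, with a loss in constant controlled by $\operatorname{Lip}(g)$ and $\operatorname{Lip}(g^{-1})$); since a PL homeomorphism of compact polyhedra is bi-Lipschitz, every chart of $\Sigma$ is thereby a model piece up to a bounded bi-Lipschitz distortion. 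Third, a \emph{gluing lemma}: if compact sets $A$ and $B$ have the extension property and meet along a bi-collared interface, then $A\cup B$ has it, with constant controlled by those of $A$, $B$ and the collar. Granting these, a finite PL atlas for the compact manifold $\Sigma$, refined so that charts and their pairwise intersections are again (bi-Lipschitz images of) model pieces, lets one build the global extension by finitely many applications of the gluing lemma; compactness bounds the number of steps, which produces the constant $L'=L'(\Sigma)$ and pins down a threshold $L=L(\Sigma)>1$ below which every chart map of an $L$-bi-Lipschitz embedding is close enough to model behaviour for the construction to run.

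For the base case, an $L$-bi-Lipschitz embedding $F$ of $\R^k\times\{0\}$ into $\R^n$ with $L$ near $1$ is, on each cube of a fixed integer grid, within $o(1)$ of an affine isometric embedding; one extends $F$ off the slice on each such cube by an affine normal pushoff $F(x,y)=\widetilde F(x)+A(x)y$, where $\widetilde F$ is a bi-Lipschitz correction of $F$ on the cube and $x\mapsto A(x)$ is a slowly varying frame for the normal data, and then patches the cubewise extensions with a bi-Lipschitz partition of unity organised by a bounded colouring of the grid so the constant does not accumulate. The case $\dim\Sigma=n$ of the theorem is then reduced to the codimension-one case as follows: apply the latter to $\partial\Sigma$ to get a global bi-Lipschitz homeomorphism $G$ agreeing with $F$ on $\partial\Sigma$; interpolate between $G$ and the prescribed $F$ across a bi-Lipschitz collar of $\partial\Sigma$ inside $\Sigma$; and finally extend over the compact PL $n$-manifold $\overline{\R^n}\setminus\operatorname{int}\Sigma$ given the bi-Lipschitz map already built on its boundary. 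This last task — extending a bi-Lipschitz boundary map over a compact PL $n$-manifold — is itself done by a handle-decomposition induction whose atomic step is the bi-Lipschitz Schoenflies statement for a PL ball: the radial cone $\widehat f(r\xi)=rf(\xi)$ of an $L$-bi-Lipschitz self-map $f$ of the sphere is again $L$-bi-Lipschitz on the closed ball, as one sees from the identity $|r\xi-s\eta|^2=(r-s)^2+rs|\xi-\eta|^2$ together with its analogue for $f(\xi),f(\eta)$.

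The main obstacle is the quantitative control in the gluing step. Unlike for purely topological extensions, bi-Lipschitz maps cannot be patched with an arbitrary partition of unity: each gluing multiplies the bi-Lipschitz constant by a factor reflecting the geometry of the overlap, so a naive induction over the simplices of a triangulation would let the constant grow with the number of simplices rather than staying bounded in terms of $\Sigma$. The remedy, which is the technical heart of the argument, is to organise all gluings into a \emph{bounded} number of stages — for instance by colouring the cells of a handle or CW structure so that equal-coloured cells are disjoint and can be absorbed simultaneously, the number of colours depending only on $n$ — and to verify that the overlap regions retain a definite bi-collar width after each stage so the next gluing remains quantitative. Secondary difficulties are checking that the affine normal pushoff stays bi-Lipschitz where slabs over adjacent cells overlap near the codimension-two skeleton of $\Sigma$ and near $\partial\Sigma$, and running the compactness arguments that upgrade ``an extension exists'' for each model piece to ``an extension exists with constant depending only on the model.''
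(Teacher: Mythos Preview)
The paper does not prove this statement at all: it is quoted as Corollary~5.20 from V\"ais\"al\"a's paper and invoked as a black box in Appendix~\ref{ap:ext} to build the map $h:U_2\to V_2$. So there is no ``paper's own proof'' to compare your proposal against.

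That said, the broad architecture you outline --- reduce to model flat pieces $\R^k\times\{0\}$ and their half-space variants, show the extension property is preserved under bi-Lipschitz changes of coordinates, and glue via a quantitative union lemma over a finite PL atlas --- is indeed the shape of V\"ais\"al\"a's argument. Where your sketch is weakest is exactly where you flag it: the ``affine normal pushoff with slowly varying frame patched by a bounded colouring'' is a plausible cartoon but not a proof, and controlling the bi-Lipschitz constant through the gluing steps (your ``main obstacle'') is genuinely delicate and occupies most of the cited paper. Your reduction of the $n$-dimensional case to the $(n-1)$-dimensional case via collaring and a handle-wise Schoenflies extension is reasonable in outline, though the ``radial cone'' formula you give only handles the unit ball, not an arbitrary PL ball, and the passage between them needs the very extension machinery you are building.

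For the purposes of this paper, none of this is required: the theorem is a citation, and the correct response is simply to invoke it.
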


Given sets $X,Y \subset \R^n$ we say that $H = \{H_t : X \to \R^3\}_{t\in[0,1]}$ is a \emph{bi-Lipschitz deformation of $X$ onto $Y$} if
\begin{enumerate}
\item for each $t\in [0,1]$, $H_t$ is a bi-Lipschitz map,
\item $H_0 = \text{Id}|X$ and  $H_1$ is a bi-Lipschitz homeomorphism of $X$ onto $Y$,
\item for any $\e>0$ and any $t\in [0,1]$ there exists $\delta>0$ such that for all $s \in [0,1]$ with $|s-t|<\delta$ we have $H_s \circ H^{-1}_t$ is $(1+\e)$-bi-Lipschitz.
\end{enumerate}

The second ingredient in the construction of $h$ is the following lemma. 

\begin{lemma}\label{lem:deform}
There exists a bi-Lipschitz deformation $H$ of $\partial V_2$ onto $\partial U_2$ {such that
\begin{enumerate}
\item for each $t\in [0,1]$, $H_t | \partial V_2 \cap \partial V_1$ is the identity,
\item for each $j \in \{\tfrac12(N+1),\dots, \tfrac32(N+1)\}$, $H_1|\partial T^g_{2k+1,k+1,j}\cap \partial V_2$ is an isometry mapping $\partial T^g_{2k+1,k+1,j}\cap \partial V_2$ onto $\partial T^g_{k+1,k+1,j}\cap \partial U_2$,
\item $H_1|\partial V_2\cap \partial V_3$ is a counterclockwise $\pi/2$-radians rotation with respect to the line $\{(x,y,z) : x=C_{2k+1,k}+1, y=\frac12\}$ that maps $\partial V_2\cap \partial V_3$ onto $\partial U_3\cap\partial U_2$.
\end{enumerate}}
\end{lemma}

Assuming we have constructed $H$, we proceed as follows. By Theorem \ref{thm:Vaisext}, for each $t \in [0,1]$, there exist constants $L_t, L_t' >1$ such that any $L_t$-bi-Lipschitz map $f: H_t(\partial V_2) \to \R^{3}$ has an $L_t'$-bi-Lipschitz extension $F: \R^{3} \to \R^{3}$. For all $t \in [0,1]$, there is an open interval $\Delta_t$ such that for all $s \in \Delta_t$, $H_s \circ H^{-1}_t$ is $L_t$-bi-Lipschitz. By compactness, we can cover $[0,1]$ with finitely many intervals $\{\Delta_{t_j}\}_{j=1}^l$, where $0 = t_0 < t_1 < \cdots < t_l = 1$ and $\Delta_{t_{j-1}}\cap \Delta_{t_j} \neq \emptyset$. For each $j=1,\dots,l$ set $a_{2j} = t_j$ and $a_{2j-1} \in \Delta_{t_{j-1}}\cap\Delta_{t_j}$. Then, each $H_{a_{j-1}}\circ H^{-1}_{a_{j}}$ extends to a bi-Lipschitz map $G_{a_{j-1}a_{j}}: \R^{3} \to \R^{3}$. Hence, the map
\[ G_{a_{2l-1}a_{2l}}\circ \cdots \circ G_{a_0a_1}\] 
is a bi-Lipschitz self-map of $\R^3$ that maps $V_2$ onto $U_2$ and its inverse is the desired map $h$.

\begin{proof}[{Proof of Lemma \ref{lem:deform}}]
The construction of $H$ is done in 3 steps.

Let $D_1=(\partial V_2 \cap  \{x = C_{k+1,k}+\frac12\})$, let $w_1$ be the center of $D_1$, and let $C_1$ be the {outermost boundary square} of $D_1$ (which is in the common boundary of $V_2$ and $V_1$). Similarly, let $D_3=\partial V_2 \cap (\{y = \frac12\}$, let $w_3$ be the center of $D_3$, and let $C_3$ be the {outermost boundary square} (which is in the common boundary of $V_2$ and $V_3$). Finally, let $w_2$ be the upper right corner point of the core curve $\Sigma^{k+1}$ and let $C_2$ be the boundary {rectangle} on $\partial V_2$ centered at $w_2$. See the left figure in Figure \ref{fig:deform2} for the projections on the $xy$-plane.

For the first step, we decompose $\partial V_2 = S_1\cup S_2\cup S_3\cup S_4$ where 
\begin{enumerate}
\item $S_1 = D_1 \cup \bigcup_{i=\frac12(N+1)}^{N}\partial T^g_{k+1,k+1,i}\cap \partial V_2$,
\item $S_2 = \partial T^g_{k+1,k+1,N+1}$ is the {boundary of the} upper right solid $g$-torus,
\item $S_3 = D_3 \cup \bigcup_{i=N+2}^{\frac32(N+1)}\partial T^g_{k+1,k+1,i}$
\item $S_4 = \partial V_2 \setminus (S_1\cup S_2\cup S_3)$ is a "crooked {square} cylinder".
\end{enumerate}

Set $l=(13\sqrt{2}-10)(20N)^{-1}$. Define the bi-Lipschitz deformation 
\[ H^{(1)} = \{ H^{(1)}_t: \partial V_2 \to \R^3\}_{t\in [0,1]}\]
such that 
\begin{itemize}
\item $H^{(1)}_t | S_1$ is the identity,
\item $H^{(1)}_t |S_2$ is a translation by $lt$ in the $y$ direction towards the negatives,
\item $H^{(1)}_t | S_3$ is a translation by $2lt$ in the $y$ direction towards the negatives,
\item $H^{(1)}_t | C_2$ is the identity,
\item $H^{(1)}_t | S_4$ is a linear interpolation of the maps $H^{(1)}_t | C_1$, $H^{(1)}_t | C_2$, and $H^{(1)}_t | C_3$.
\end{itemize}

\begin{figure}
\includegraphics[width=\textwidth]{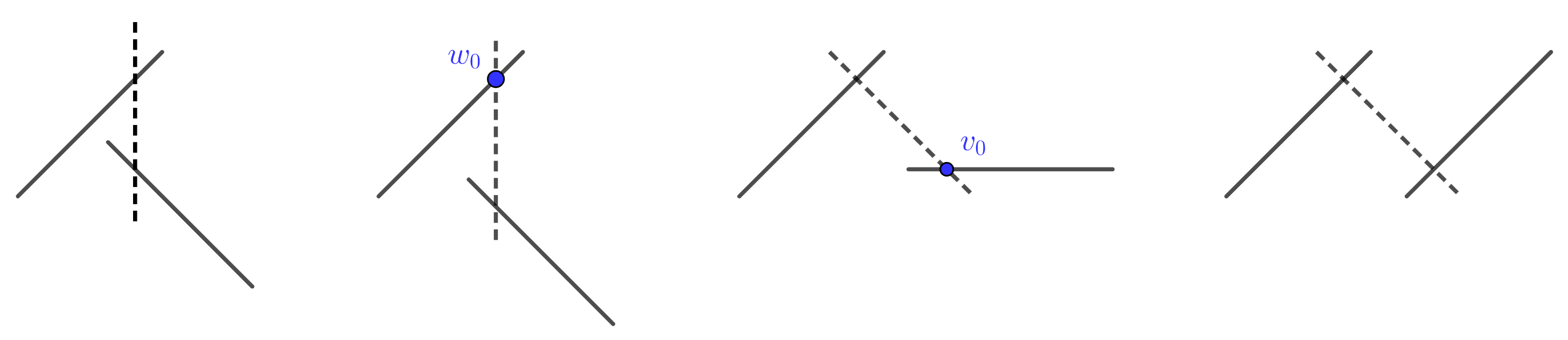}
\caption{The projections on the $xy$-plane of {the core curves of the $g$-tori bounded by $\partial T^g_{k+1,N_i}$, $i\in\{0,1,2\}$ (in the first figure), $H^{(1)}_1(\partial T^{g}_{k+1,N+i})$, $i\in\{0,1,2\}$ (in the second figure), $H^{(2)}_1\circ H^{(1)}_1( \partial T^{g}_{k+1,N+1})$, $i\in\{0,1,2\}$ (in the third figure), and $H^{(3)}_1\circ H^{(2)}_1\circ H^{(1)}_1(\partial T^{g}_{k+1,N+2})$ $i\in\{0,1,2\}$ (in the fourth figure).}}
\label{fig:deform1}
\end{figure}

\begin{figure}
\includegraphics[width=\textwidth]{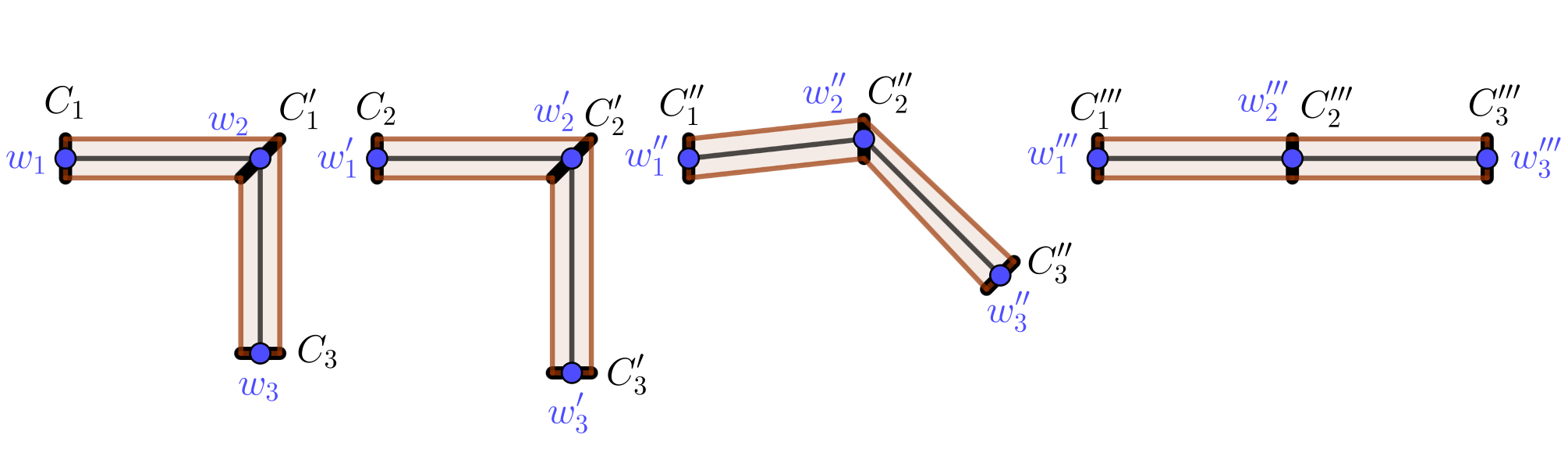}
\caption{The projections on the $xy$-plane of $S_4$ (first figure), $S_4'$ (second figure), $S_4''$ (third figure), $H^{(3)}(S_4'')$ (fourth figure).}
\label{fig:deform2}
\end{figure}

For the second step, set $S_i' = H^{(1)}_1(S_i)$ for $i=1,2,3,4$, set $w_i' = H^{(1)}_1(w_i)$ for $i=1,2,3$, and set $C_i' = H^{(1)}_1(C_i)$ for $i=1,2,3$. We define one more auxiliary point. Let $\pi:\R^3 \to \{z=0\}$ be the projection on the $xy$-plane and let $w_0 $ be the unique point in the intersection $\pi({\tau_1})\cap \pi({\tau_2})$ {where $\tau_1,\tau_2$ are the core curves of the solid $g$-tori bounded by $S_1',S_2'$, respectively.}

Let $R_t$ be the counterclockwise rotation map by $t\pi/4$ with respect to the line $\pi^{-1}(\{w_0\})$. We also use a bi-Lipschitz deformation that takes a {rectangle} to a {square} of the same center as the {rectangle} and side-length equal to the length of the shorter side of the rectangle. In particular {if $a,r>0$, then the map
\begin{equation}\label{eq:straight} 
\begin{cases}
(\pm (r+a),y) \mapsto (\pm (r+a) + \mp at, x), &  y \in [-r,r]\\
(x,\pm r) \mapsto (\frac{r+a-t}{r+a}x, \pm r), &x \in [-r-a,r+a]
\end{cases}
\end{equation}
deforms the rectangle $\partial ([-a-r,a+r]\times [-r,r])$ to the square $\partial [-r,r]^2$.}

Define now a bi-Lipschitz deformation $H^{(2)}= \{H^{(2)}_t:  H^{(1)}_1(\partial V_2)  \to \R^3\}_{t\in [0,1]}$ such that 
\begin{itemize}
\item $H^{(2)}_t | S_1'$ is the identity, 
\item $H^{(2)}_t|S_2'\cup S_3' =R_t$,
\item $H^{(2)}_t| C_2'$ is the composition of $R_t$ and a ``{rectangle-to-square}'' deformation as in \eqref{eq:straight},
\item $H^{(2)}_t | S_4'$ is a linear interpolation of the maps $H^{(2)}_t | C_1'$, $H^{(2)}_t | C_2'$, and $H^{(2)}_t | C_3'$.
\end{itemize}

For the third and final step, set $S_i'' = H^{(2)}_1(S_i')$ for $i=1,2,3,4$, set $w_i'' = H^{(2)}_1(w_i')$ for $i=1,2,3$, and set $C_i'' = H^{(2)}_1(C_i')$ for $i=1,2,3$.
We define one more auxiliary point. Let $v_0$ be the unique point in the intersection $\pi({\sigma_1})\cap \pi({\sigma_2})$, {where $\sigma_1,\sigma_2$ are the core curves of the solid $g$-tori bounded by $S_1'',S_2''$, respectively.}.
  
Let $R_t'$ be the counterclockwise rotation by $t\pi/4$ with respect to the line $\pi^{-1}\{v_0\}$. Define now a bi-Lipschitz deformation $H^{(3)} = \{ H^{(2)}_t: H^{(2)}_1(H^{(1)}_1(\partial V_2))  \to \R^3\}_{t\in [0,1]}$
such that 
\begin{itemize}
\item $H^{(3)}_t | S_1'' \cup S_2''$ is the identity, 
\item $H^{(3)}_t|S_3'' = R_t'$,
\item $H^{(3)}_t| C_2''$ is a translation with $H^{(3)}_t( C_2'')$ being centered at $R_t'(w_2'')$,
\item $H^{(3)}_t | S_4''$ is a linear interpolation of the maps $H^{(3)}_t | C_1''$, $H^{(3)}_t | C_2''$, and $H^{(3)}_t | C_3''$.
\end{itemize}

We finish by concatenating the deformations $H^{(1)}, H^{(2)}, H^{(3)}$ and obtain $H$ by defining $H_t = H^{(1)}_{3t}$ if $t \in [0,1/3]$, $H_t = H^{(2)}_{3t-1}$ if $t \in [1/3,2/3]$, and $H_t = H^{(3)}_{3t-2}$ if $t \in [2/3,1]$.
\end{proof} 

\section{UQR power maps}\label{ap:uqr}

It is well-known that there exist UQR analogues of power mappings in $\overline{\R^3}$ of degree $d^2$, where $d \geq 2$. These were first constructed by Mayer \cite{Mayer}. It is perhaps less well-known that other degrees may be achieved.

\begin{proposition}\label{prop:deg2}
There exists a UQR map $f:\overline{\R^3} \to \overline{\R^3}$ of degree $2$ with Julia set equal to the unit sphere $\S^2$.
\end{proposition}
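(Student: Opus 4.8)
The plan is to build $f$ as the space analogue of the squaring map $z\mapsto z^{2}$, following Mayer's construction \cite{Mayer} of uniformly quasiregular power maps but using a linear model of index $2$ rather than one of index $d^{2}$. In the plane, $z^{2}=\exp(2\log z)$; one should think of $f$ as $Z\circ A\circ Z^{-1}$, where $Z$ is a Zorich map — the space analogue of $\exp$ — and $A$ is a carefully chosen conformal linear map.

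First I would fix a Zorich map $Z\colon\R^{3}\to\R^{3}\setminus\{0\}$, i.e.\ a quasiregular local homeomorphism of the form $Z(x,y,z)=e^{z}h(x,y)$ on the beam $[-1,1]^{2}\times\R$, where $h$ maps the square bi-Lipschitzly onto the closed upper hemisphere of the unit sphere, extended to $\R^{3}$ by successive reflections across the faces of the beams. Then $|Z(x,y,z)|=e^{z}$, the plane $P=\{z=0\}$ maps onto $S^{2}$ with $Z^{-1}(S^{2})=P$, and the ends $z\to-\infty$, $z\to+\infty$ go to $0$ and $\infty$. Choosing $h$ with enough symmetry, the automorphy group $G=\{\gamma:Z\circ\gamma=Z\}$ is a planar crystallographic group acting on $P$ by isometries, generated by translations in a lattice $\Lambda\subset P$ together with the point symmetries of the beam pattern. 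Identifying $P$ with $\C$ so that $\Lambda$ is a scaled copy of the Gaussian integers $\Z[i]$, let
\[ A(x,y,z)=(x-y,\;x+y,\;\sqrt{2}\,z),\]
i.e.\ $\sqrt{2}$ times the rotation through $\pi/4$ about the $z$-axis; on $P\cong\C$ this is multiplication by $1+i$. One checks that $A$ preserves $P$ and the orientation of the $z$-axis and that it normalizes $G$ into itself: it carries $\Lambda$ onto the index-$2$ sublattice $(1+i)\Lambda$ and maps the rotation and reflection loci of $G$ onto loci of the same type, so $AGA^{-1}\subseteq G$ with $[G:AGA^{-1}]=[\Lambda:A\Lambda]=2$. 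Consequently $f\circ Z=Z\circ A$ unambiguously defines a map on $Z(\R^{3})=\R^{3}\setminus\{0\}$, which is bounded near $0$ and near $\infty$, hence extends by $f(0)=0$, $f(\infty)=\infty$ to a quasiregular map $f\colon\overline{\R^{3}}\to\overline{\R^{3}}$ (isolated singularities of bounded quasiregular maps are removable). Since $f^{n}\circ Z=Z\circ A^{n}$ with each $A^{n}$ a similarity, $K(f^{n})\le K(Z)^{2}$ for all $n$, so $f$ is UQR.

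Next I would read off the degree and the Julia set. The degree of $f$ equals the index $[G:AGA^{-1}]=2$, exactly as $z\mapsto z^{2}=\exp(2\log z)$ has degree $[2\pi i\Z:4\pi i\Z]=2$; concretely $|f(w)|=|w|^{\sqrt{2}}$ near $0$ and $f$ restricts on each small sphere $\{|w|=r\}$ to a degree-$2$ quasiregular branched cover of $S^{2}$. The equator $S^{2}=Z(P)$ is completely invariant because $A(P)=P$, and on it $f$ is conjugate, via $Z|_{P}$, to the map induced on $P/G$ by the expanding similarity $A|_{P}$; hence $f|_{S^{2}}$ is uniformly expanding, no family of iterates is normal at any point of $S^{2}$, and $S^{2}\subseteq J(f)$. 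Conversely, if $w\in\overline{\R^{3}}\setminus S^{2}$ then any $Z$-preimage of $w$ has $z$-coordinate $z_{0}\ne 0$ (well-defined since $G$ preserves the level sets of $z$), and $\log|f^{n}(w)|=(\sqrt{2})^{n}z_{0}$, which tends to $-\infty$ when $z_{0}<0$ and to $+\infty$ when $z_{0}>0$; thus $f^{n}(w)\to 0$ or $f^{n}(w)\to\infty$, so $\overline{\R^{3}}\setminus S^{2}$ is the union of the two immediate attracting basins of $0$ and $\infty$ and lies in the Fatou set. Therefore $J(f)=S^{2}$.

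The step I expect to be the main obstacle is precisely the one that differs from \cite{Mayer}: verifying that the index-$2$ conformal map $A$ is compatible with the automorphy group $G$ of the Zorich map. A $45^{\circ}$ rotation-with-scaling is not a symmetry of the square tiling underlying $Z$, so it is not a priori clear that it conjugates $G$ into itself; this has to be checked by a direct computation with the relevant wallpaper group once $h$ has been chosen symmetrically (the point being that $AGA^{-1}$ is again generated by translations in $A\Lambda\subseteq\Lambda$ together with the same point symmetries, carried to rotation and reflection loci that already belong to $G$). Once this compatibility is in place, the remaining verifications — quasiregularity, the uniform dilatation bound, and the identification of the Julia set — are routine and follow the complex model of $z\mapsto z^{2}$ and the arguments of \cite{Mayer}.
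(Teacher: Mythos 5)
Your proposal is correct and follows essentially the same route as the paper: a Zorich map strongly automorphic under a group $G$, the linear map $A(x_1,x_2,x_3)=(x_1-x_2,x_1+x_2,\sqrt{2}x_3)$, verification that $AGA^{-1}\subseteq G$, the Schr\"oder equation $f\circ\mathcal{Z}=\mathcal{Z}\circ A$, and then degree $2$ and $J(f)=\S^2$ as in Mayer. The only difference is cosmetic: the paper sidesteps your worry about reflection loci by taking $G$ to be generated just by the unit translations $g_1,g_2$ and the $\pi$-rotation $g_3$ about the $x_3$-axis, so the compatibility check you flag reduces to the three-generator computation $Ag_1A^{-1}=g_2g_1$, $Ag_2A^{-1}=g_2g_1^{-1}$, $Ag_3A^{-1}=g_3$.
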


\begin{proof}
If $x = (x_1,x_2,x_3) \in \R^3$, let $G$ be the discrete group of isometries in $\R^3$ generated by $g_1(x) = x + (1,0,0)$, $g_2(x)  = x + (0,1,0)$ and $g_3$ the rotation about the $x_3$-axis by angle $\pi$. Then there is a Zorich map $\mathcal{Z}$ which is strongly automorphic with respect to $G$ and which maps the plane $\{ x_3 = 0\}$ onto $\S^2$ (see, for example, \cite{FleMac} for more details on this). 

Let $A$ be the linear map which is a composition of a dilation with scaling factor $\sqrt{2}$ and a rotation by $\pi/4$ about the $x_3$-axis, that is,
\[ A(x_1,x_2,x_3) = (x_1-x_2,x_1+x_2,\sqrt{2}x_3).\]
We need $AGA^{-1} \subset G$ and it is sufficient to check that this is so on the generators. Clearly $Ag_3A^{-1} = g_3$. Next, by the linearity of $A$, we have
\[
Ag_1A^{-1}(x) = A\left ( A^{-1}(x) + (1,0,0) \right ) = x + A(1,0,0) = x+(1,1,0) = g_2(g_1(x))
\]
and hence $Ag_1A^{-1} \in G$. Finally,
\[
Ag_2A^{-1}(x) = A \left ( A^{-1}(x) + (0,1,0) \right ) = x + A(0,1,0) = x+ (-1,1,0) = g_2(g_1^{-1}(x))
\]
and hence $Ag_2A^{-1} \in G$. Thus by, for example, \cite[Theorem 3.4]{FleMac} we conclude that there is a UQR map $f:\R^3 \to \R^3$ which solves the Schr\"oder equation $f\circ \mathcal{Z} = \mathcal{Z} \circ A$. Proceeding as Mayer \cite{Mayer}, we see that $f$ extends over the point at infinity, the Julia set of $f$ is $\S^2$, and $f$ has degree $2$. 
\end{proof}

We may use the map from Proposition \ref{prop:deg2} to prove Theorem \ref{thm:degreeUQR}.

\begin{proof}[Proof of Theorem \ref{thm:degreeUQR}]
Let $f$ be the UQR map of degree $2$ from Proposition \ref{prop:deg2}, and denote by $P_d$, for $d\geq 2$, the UQR power maps constructed by Mayer \cite{Mayer} using the Zorich map $\mathcal{Z}$ above. Then following the argument in \cite[\textsection 5.1]{Fle}, $\mathcal{F}$ is a quasiregular semigroup, where $\mathcal{F}$ is generated by $f$ and the collection of $P_d$'s with Julia set $J(\mathcal{F}) = \S^2$. In particular, the map $f^n \circ P_d$ is UQR with the required properties.
\end{proof}

\section{Quasiregular uniformization of Cantor sets in $\R^3$ with controlled geometry}\label{ap:QRunif}

In \cite{DSbook}, David and Semmes showed that a metric space $X$ is quasisymmetrically homeomorphic to the standard Cantor set $\mathcal{C}$ if and only if it is compact, uniformly disconnected, and uniformly perfect. Later, MacManus \cite{MM2} proved a stronger uniformization result for Cantor sets contained in $\R^2$ by showing that for a compact set $X\subset \R^2$ there exists a quasisymmetric mapping $F:\R^2 \to \R^2$ with $F(\mathcal{C}) = X$ if and only if $X$ is uniformly perfect and uniformly disconnected. The improvement here is that the quasisymmetric homeomorphism can be in fact assumed to be defined on the ambient space $\R^2$ and not just $\mathcal{C}$.

MacManus' theorem is false in higher dimensions due to the existence of a wild (quasi-)self-similar Cantor sets in $\R^3$ and in $\R^4$ \cite{PW2}. To avoid such topological obstructions, usually an increase in dimension is required. In \cite{V3}, the third named author showed that for a compact set $X\subset \R^n$ there exists a quasisymmetric mapping $F:\R^{n+1} \to \R^{n+1}$ with $F(\mathcal{C}) = X$ if and only if $X$ is uniformly perfect and uniformly disconnected. 

In this appendix we prove a uniformization result for sets in $\R^3$. The first difference here is that the dimension of the space is not increased, and the second that quasisymmetry is replaced by the weaker quasiregularity. 

\begin{theorem}\label{thm:QR-unif}
For each $c\geq 1$, there exists $K\geq 1$ with the following property. If $X$ is a compact $c$-uniformly perfect and $c$-uniformly disconnected set in $\R^3$, then there exists a $K$-quasiregular map $f:\R^3 \to \R^3$ such that $f(X)=\mathcal{C}$.
\end{theorem}

The next lemma is well known but we give a short proof for the shake of completeness.

\begin{lemma}\label{lem:PLmfds}
Let $M$ be a compact connected PL 2-manifold in $\R^3$. There exists a PL branched covering $g : M \to \partial [0,1]^3$ of degree at least 3.
\end{lemma}

\begin{proof}
Let $\mathcal{S}_0$ be the boundary of $[0,1]^3$. For each $n\in\N$ let $\mathcal{S}_n$ be the boundary of
\[ \left( [0,2n+1]\times[-2,2] \setminus  \bigcup_{i=1}^n [2i-1,2i]\times [-1,1]\right) \times[-1,1] \subset \R^3.\]

Let $M$ be a compact connected PL 2-manifold in $\R^3$. By the Classification Theorem for surfaces in $\R^3$ \cite[Section 22]{Moise} there exists $n\in\N\cup\{0\}$ and a homeomorphism $M \to \mathcal{S}_n$. Since both $M$ and $\mathcal{S}_n$ are PL, there exists a PL homeomorphism $M \to \mathcal{S}_n$.

Fix $k\in\N$. We construct a degree 2 PL branched covering from $\mathcal{S}_k$ onto $\mathcal{S}_0$. Consider the two planes $P_1 = \{z=0\}$ and $P_2 = \{y=0\}$ which cut $\mathcal{S}_k$ into 4 PL disks $D_1,D_2,D_3, D_4$. Note that $P_1\cap P_2\cap \mathcal{S}_k$ contains exactly $2(k+1)$ many points $\{p_1,\dots,p_{2(k+1)}\}$ and these points are contained on the boundary curve of each $D_i$. We may assume that the pairs $(D_1,D_3)$ and $(D_2,D_4)$ intersect only on the points $\{p_1,\dots,p_{2(k+1)}\}$ while all other pairs intersect on boundary curves. Place now points $\{p_1',\dots,p_{2(k+1)}'\}$ on $(\partial [0,1]^2)\times \{\frac12\}$ oriented in the same way that points $\{p_1,\dots,p_{2(k+1)}\}$ are oriented on the boundary curve of $D_1$. There exists a 2-to-1 PL branched covering $\mathcal{S}_k \to \mathcal{S}_0$ that maps $p_i$ to $p_i'$ for each $i$, maps $D_1,D_2$ onto $\mathcal{S}_0 \cap \{z\geq \frac12\}$, and maps $D_3,D_4$ onto $\mathcal{S}_0 \cap \{z\leq \frac12\}$.

Thus, we have constructed a PL branched covering from $M$ onto $\mathcal{S}_0$ which has degree either 1 (if $n=0$) or 2 (if $n\geq 1$). To complete the proof, we compose this map with a degree 4 PL branched covering of $\mathcal{S}_0$ onto itself as in Figure \ref{fig:cubic-deg2}.
\end{proof}

\begin{figure}
    \centering
    \begin{minipage}{0.45\textwidth}
        \centering
        \includegraphics[width=0.7\textwidth]{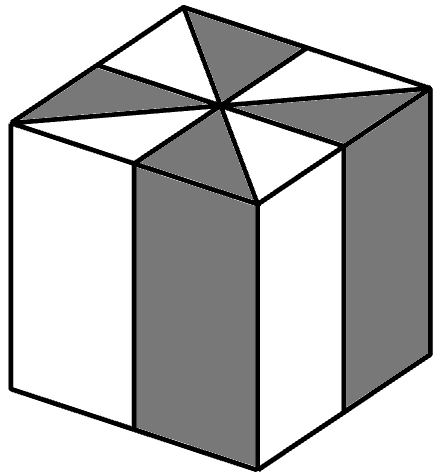} 
    \end{minipage}\hfill
    \begin{minipage}{0.45\textwidth}
        \centering
        \includegraphics[width=0.7\textwidth]{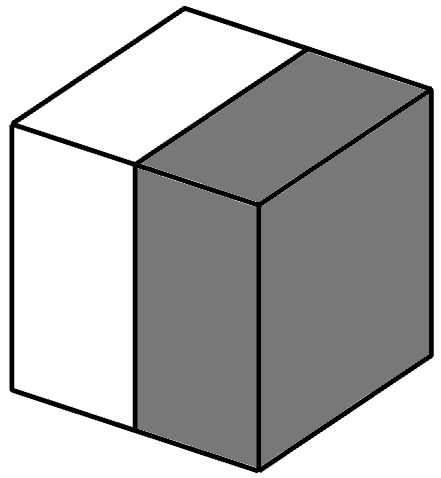} 
    \end{minipage}
    \caption{A degree 4 PL branched covering of $\mathcal{S}_0$ onto itself.}
    \label{fig:cubic-deg2}
\end{figure}

We are now ready to prove Theorem \ref{thm:QR-unif}.

\begin{proof}[{Proof of Theorem \ref{thm:QR-unif}}]
Let $X$ be a compact $c$-uniformly perfect and $c$-uniformly disconnected set in $\R^3$. Let $\mathcal{W}$, $\{\mathcal{M}_1,\dots,\mathcal{M}_l\}$, $\{\mathcal{N}_1,\dots,\mathcal{N}_q\}$, ${\bf i}:\mathcal{W} \to \{1,\dots,l\}$, ${\bf j}:\mathcal{W} \to \{1,\dots,q\}$, and $\{\phi_w\}_{w\in\mathcal{W}}$ be the dictionary, two finite collections of PL manifolds, functions, and similarities as in Lemma \ref{lem:finite defining sequence}. Following the arguments in the proof of \cite[Corollary 5.2]{BV}, we may further assume that $\mathcal{M}_{{\bf i}(\varepsilon)}$ is a cube.

Let $\zeta_1,\zeta_2 : \R^3 \to \R^3$ be the similarities $\zeta_1(p) = \frac13 p$ and $\zeta_2(p) = \frac13 p + (\frac23,0,0)$. Note that $\{\zeta_1,\zeta_2\}$ is an iterated function system with attractor the standard Cantor set $\mathcal{C}$. Given a word $u=i_1\cdots i_k \in \{1,2\}^*$ we define $\zeta_{u} = \zeta_{i_1}\circ\cdots\circ \zeta_{i_k}$. Conventionally, we set $\zeta_{\varepsilon}$ be the identity map. 

Set $Q = [-1/3,4/3]\times[-1,1]^2$. For all $u\in\{1,2\}^*$ and $i\in\{1,2\}$, $\zeta_{ui}(Q) \subset \zeta_{u}(Q)$, $\diam{\zeta_{wi}(Q)} = \frac13\diam{\zeta_w(Q)}$, and 
\begin{align*}
\dist(\zeta_{ui}(Q),\partial \zeta_{u}(Q)) &\geq \tfrac1{15}\diam{\zeta_u(Q)}\\
\dist(\zeta_{u1}(Q),\zeta_{u2}(Q)) &\geq \tfrac1{15}\diam{\zeta_u(Q)}.
\end{align*}

For each $j\in\{1,\dots,q\}$ we define a PL 3-manifold with boundary $\mathcal{Q}_j$ as follows. Fix $j\in \{1,\dots,q\}$ and denote by $m_j$ the number of boundary components of $\mathcal{N}_j$. Recall that $m_j \geq 3$. Let $k_j$ be the largest integer such that $2^{k_j}+1 \leq m_j$ and consider the following two possible cases. 
\begin{enumerate}
\item If $m_j = 2^{k_j}+1$, then set $\mathcal{Q}_j = Q \setminus \bigcup_{w\in \{1,2\}^{k_j}}\zeta_{w}(Q)$.
\item If $2^{k_j} + 1<m_j$, by maximality of $k_j$ we also have $2^{k_j +1}+1>m_j$. Fix $v_{j,1},\dots,v_{j,n_j} \in \{1,2\}^{k_j}$ with $n_j=m_j - 2^{k_j}-1$. Define now
\[ \mathcal{Q}_j = Q \setminus \left( \bigcup_{u \in \{1,2\}^*\setminus \{v_{j,1},\dots,v_{j,n_j}\}} \zeta_u(Q)\right) \setminus \left(\bigcup_{i=1}^{n_j} \zeta_{v_{j,i}1}(Q) \cup \zeta_{v_{j,i}2}(Q) \right).\]
\end{enumerate}
Note that for all $j\in\{1,\dots,q\}$, $\partial\mathcal{Q}_j$ has exactly as many components as $\partial \mathcal{N}_j$ does.

We define a function ${\bf u}:\mathcal{W} \to \{1,2\}^*$ as follows. Define ${\bf u}(\varepsilon) = \varepsilon$. Suppose now that for some $w\in \mathcal{W}$ we have defined ${\bf u}(w)$. Recall from Lemma \ref{lem:finite defining sequence}(P6) that $\mathcal{N}_{{\bf j}(w)}$ has $N_w+1$ many components. We consider two possible cases. 
\begin{enumerate}
\item If $2^{k_{{\bf j}(w)}}=N_w$, then set $\{{\bf u}(wi) : 1\leq i \leq N_w\} = \{{\bf u}(w)v:v\in \{1,2\}^{k_{{\bf j}(w)}} \}$.
\item If $2^{k_{{\bf j}(w)}}<N_w$, then set
\begin{align*} 
\{{\bf u}(wi) : 1\leq i \leq N_w\} &= \left\{{\bf u}(w)v : v\in \{1,2\}^{k_{{\bf j}(w)}}\setminus\{v_{{\bf j}(w),1},\dots,v_{{\bf j}(w),n_{{\bf j}(w)}}\}\right\}\\ 
&\qquad \cup \bigcup_{i=1}^{n_{{\bf j}(w)}}\{{\bf u}(w)v_{{\bf j}(w),i}1,{\bf u}(w)v_{{\bf j}(w),i}2\}.
\end{align*} 
\end{enumerate}

It is clear that for any $w\in \mathcal{W}$ there exists $u\in \{1,2\}^*$ (for example take $u= {\bf u}(w)$) such that $\zeta_u(Q) \subset \zeta_{{\bf u}(w)}$. We claim now that for any $u\in \{1,2\}^*$ there exists $w\in \mathcal{W}$ such that $\zeta_{{\bf u}(w)} \subset \zeta_u(Q)$. Assuming the claim, it follows that
\begin{equation}\label{eq:newdef}
\mathcal{C} = \bigcap_{n=1}^{\infty}\bigcup_{u\in \{1,2\}^n}\zeta_{u}(Q) = \bigcap_{n=1}^{\infty}\bigcup_{\substack{w\in \mathcal{W}\\|w|=n}}\zeta_{{\bf u}(w)}(Q).  
\end{equation}
To prove the claim, fix $u\in \{1,2\}^*$ and let $\mathcal{U}_u = \{w\in \mathcal{W} : \zeta_u(Q) \subset \zeta_{{\bf u}(w)}(Q)\}$. Clearly, $\varepsilon \in \mathcal{U}_u$. Let $w\in \mathcal{U}_u$ be an element of maximal length. Assume first that $N_w = 2^{k_{{\bf j}(w)}}$. By maximality of $|w|$, we have that $|u| > |{\bf u}(w)|+ k_{{\bf j}(w)}$ and it follows that there exists $i\in \{1,\dots,N_w\}$ such that $\zeta_{{\bf u}(wi)}(Q) \subset \zeta_u(Q)$. Assume now that $N_w > 2^{k_{{\bf j}(w)}}$. If $|u| \leq |{\bf u}(w)| + k_{{\bf j}(w)}$, then from design of ${\bf u}$, there exists $i\in \{1,\dots,N_w\}$ such that $\zeta_{{\bf u}(wi)}(Q) \subset \zeta_u(Q)$. If $|u| > |{\bf u}(w)| + k_{{\bf j}(w)}$, either $\zeta_{u}(Q) \subset \zeta_{{\bf u}(w)v}(Q)$ for some $v \in \{1,2\}^{k_{{\bf j}(w)}}\setminus\{v_{{\bf j}(w),1},\dots,v_{{\bf j}(w),n_{{\bf j}(w)}}\}$ which is impossible by maximality of $|w|$, or $\zeta_{u}(Q) \subset \zeta_{{\bf u}(w)v_{i,n_{{\bf j}(w)}}}(Q)$ for some $i\in\{1,\dots,n_{{\bf j}(w)}\}$ which is impossible for the same reason.

By Lemma \ref{lem:PLmfds}, for each $i\in\{1,\dots,l\}$, there exists a PL branched covering $g_i : \partial\mathcal{M}_{i} \to \partial Q$ of degree at least 3. By Theorem \ref{thm:BE}, for each $j\in\{1,\dots,q\}$ there exists a sense-preserving PL branched covering $G_j : \mathcal{N}_j \to \mathcal{Q}_j$.

Finally, we define a quasiregular map $f:\R^3 \to \R^3$ with $f(X)=\mathcal{C}$. Note that
\[\R^3 = (\R^3 \setminus \phi_{\varepsilon}(\mathcal{M}_{{\bf i}(\varepsilon)})) \cup \left( \bigcup_{w\in \mathcal{W}}\phi_{w}(\mathcal{N}_{{\bf j}(w)}) \right) \cup X.\]
Define $f$ so that
\begin{enumerate}
\item $f| \R^3 \setminus \phi_{\varepsilon}(\mathcal{M}_{{\bf i}(\varepsilon)}) :\R^3 \setminus \phi_{\varepsilon}(\mathcal{M}_{{\bf i}(\varepsilon)})  \to \R^3 \setminus Q$ is a PL bi-Lipschitz map with
\[ f\circ \phi_{\varepsilon} | \partial \mathcal{M}_{{\bf i}(\varepsilon)} = g_{{\bf i}(\varepsilon)},\]
\item for all $w \in \mathcal{W}$
\[ f| \phi_w(\mathcal{N}_{{\bf j}(w)}) : \phi_w(\mathcal{N}_{{\bf j}(w)}) \to \zeta_{{\bf u}(w)}(\mathcal{Q}_{{\bf j}(w)})\]  
with
\[f\circ \phi_{w} | \mathcal{N}_{{\bf j}(w)} = \zeta_{{\bf u}(w)} \circ G_{{\bf j}(w)},\]
\item if $x \in X$ and $x$ is the unique point in $\bigcap_{k=1}^{\infty}\phi_{i_1\cdots i_k}(\mathcal{M}_{{\bf i}(i_1\cdots i_k)})$, then $f(x)$ is the unique point in $\bigcap_{k=1}^{\infty}\zeta_{{\bf u}(i_1\cdots i_k)}(Q)$.
\end{enumerate}

It is easy to see that $f$ is a quasiregular map since, up to a set of $\sigma$-finite $\mathcal{H}^2$-measure, the map is a PL branched covering made up of a finite collection $\{G_{{\bf j}(w)}\}_{w\in \mathcal{W}}$ of PL branched coverings. Moreover, by \eqref{eq:newdef}, $f(X) = \mathcal{C}$.
\end{proof}

\bibliography{genus}

\providecommand{\bysame}{\leavevmode\hbox to3em{\hrulefill}\thinspace}
\providecommand{\MR}{\relax\ifhmode\unskip\space\fi MR }
\providecommand{\MRhref}[2]{%
  \href{http://www.ams.org/mathscinet-getitem?mr=#1}{#2}
}
\providecommand{\href}[2]{#2}
\begin{thebibliography}{\v{Z}05}

\bibitem[Arm66]{Ar}
Steve Armentrout, \emph{Decompostions of {$E^{3}$} with a compact {${\rm
  O}$}-dimensional set of nondegenerate elements}, Trans. Amer. Math. Soc.
  \textbf{123} (1966), 165--177. \MR{195074}

\bibitem[BE79]{BE}
Israel Berstein and Allan~L. Edmonds, \emph{On the construction of branched
  coverings of low-dimensional manifolds}, Trans. Amer. Math. Soc. \textbf{247}
  (1979), 87--124. \MR{517687}

\bibitem[BV19]{BV}
Matthew Badger and Vyron Vellis, \emph{Geometry of measures in real dimensions
  via {H}\"{o}lder parameterizations}, J. Geom. Anal. \textbf{29} (2019),
  no.~2, 1153--1192. \MR{3935254}

\bibitem[DS97]{DSbook}
Guy David and Stephen Semmes, \emph{Fractured fractals and broken dreams},
  Oxford Lecture Series in Mathematics and its Applications, vol.~7, The
  Clarendon Press, Oxford University Press, New York, 1997. \MR{1616732}

\bibitem[Fle19]{Fle}
Alastair~N. Fletcher, \emph{Quasiregular semigroups with examples}, Disc. Cont.
  Dyn. Syst. \textbf{39} (2019), no.~4, 483--520. \MR{4179769}

\bibitem[FM20]{FleMac}
Alastair~N. Fletcher and Doug Macclure, \emph{Strongly automorphic mappings and
  julia sets of uniformly quasiregular mappings}, J. Anal. Math. \textbf{141}
  (2020), 2157--2172. \MR{3927507}

\bibitem[FN11]{FN11}
Alastair~N. Fletcher and Daniel~A. Nicks, \emph{Julia sets of uniformly
  quasiregular mappings are uniformly perfect}, Math. Proc. Cam. Phil. Soc.
  \textbf{151} (2011), no.~3, 541--550. \MR{2838349}

\bibitem[FS21]{FS}
Alastair Fletcher and Daniel Stoertz, \emph{Spiders' webs of doughnuts}, Rev.
  Mat. Iberoam. \textbf{37} (2021), no.~1, 161--176. \MR{4201409}

\bibitem[FS22]{FS2}
Alastair Fletcher and Daniel Stoertz, \emph{Genus 2 {C}antor sets}, Pacific J.
  Math. \textbf{321} (2022), no.~2, 283--307. \MR{4562571}

\bibitem[FV21]{FV}
Alastair Fletcher and Vyron Vellis, \emph{On uniformly disconnected julia
  sets}, Math. Z. \textbf{299} (2021), 853--866. \MR{4311621}

\bibitem[FW15]{FW}
Alastair Fletcher and Jang-Mei Wu, \emph{Julia sets and wild {C}antor sets},
  Geom. Dedicata \textbf{174} (2015), 169--176. \MR{3303046}

\bibitem[HR02]{HR}
Juha Heinonen and Seppo Rickman, \emph{Geometric branched covers between
  generalized manifolds}, Duke Math. J. \textbf{113} (2002), no.~3, 465--529.
  \MR{1909607}

\bibitem[IM96]{IM}
Tadeusz Iwaniec and Gaven Martin, \emph{Quasiregular semigroups}, Ann. Acad.
  Sci. Fenn. Math. \textbf{21} (1996), no.~2, 241--254. \MR{1404085}

\bibitem[Mac99]{MM2}
Paul MacManus, \emph{Catching sets with quasicircles}, Rev. Mat. Iberoamericana
  \textbf{15} (1999), no.~2, 267--277. \MR{1715408}

\bibitem[May97]{Mayer}
Volker Mayer, \emph{Uniformly quasiregular mappings of {L}att\`es type},
  Conform. Geom. Dyn. \textbf{1} (1997), 104--111. \MR{1482944}

\bibitem[Moi77]{Moise}
Edwin~E. Moise, \emph{Geometric topology in dimensions {$2$} and {$3$}},
  Graduate Texts in Mathematics, Vol. 47, Springer-Verlag, New York-Heidelberg,
  1977. \MR{0488059}

\bibitem[Mun66]{Munkres}
James~R. Munkres, \emph{Elementary differential topology}, revised ed., Annals
  of Mathematics Studies, No. 54, Princeton University Press, Princeton, N.J.,
  1966, Lectures given at Massachusetts Institute of Technology, Fall, 1961.
  \MR{0198479}

\bibitem[PRW14]{PRW}
Pekka Pankka, Kai Rajala, and Jang-Mei Wu, \emph{Quasiregular ellipticity of
  open and generalized manifolds}, Comput. Methods Funct. Theory \textbf{14}
  (2014), no.~2-3, 383--398. \MR{3265368}

\bibitem[PW23]{PW2}
Pekka Pankka and Jang-Mei Wu, \emph{Quasiregular cobordism theorem}, arXiv
  preprint (2023).

\bibitem[Ric93]{Rickman}
Seppo Rickman, \emph{Quasiregular mappings}, Ergebnisse der Mathematik und
  ihrer Grenzgebiete (3) [Results in Mathematics and Related Areas (3)],
  vol.~26, Springer-Verlag, Berlin, 1993. \MR{1238941}

\bibitem[TV81]{TV}
P.~Tukia and J.~V\"{a}is\"{a}l\"{a}, \emph{Lipschitz and quasiconformal
  approximation and extension}, Ann. Acad. Sci. Fenn. Ser. A I Math. \textbf{6}
  (1981), no.~2, 303--342 (1982). \MR{658932}

\bibitem[V{\"{a}}i86]{Vais}
Jussi V{\"{a}}is{\"{a}}l{\"{a}}, \emph{Bi-{L}ipschitz and quasisymmetric
  extension properties}, Ann. Acad. Sci. Fenn. Ser. A I Math. \textbf{11}
  (1986), no.~2, 239--274. \MR{853960}

\bibitem[Vel21]{V3}
Vyron Vellis, \emph{Uniformization of {C}antor sets with bounded geometry},
  Conform. Geom. Dyn. \textbf{25} (2021), 88--103. \MR{4298216}

\bibitem[\v{Z}05]{Z1}
Matja\v{z} \v{Z}eljko, \emph{Genus of a {C}antor set}, Rocky Mountain J. Math.
  \textbf{35} (2005), no.~1, 349--366. \MR{2117612}

\end{thebibliography}
\bibliographystyle{amsbeta}

\end{document}